\newcommand{\PPM}{\textsc{PPM} }
\newcommand{\CPPM}[1]{\textsc{#1-PPM}}
\newcommand{\PPPM}[1]{\textsc{#1-Pattern PPM}}
\newcommand{\cA}{\mathcal{A}}
\newcommand{\cB}{\mathcal{B}}
\newcommand{\cC}{\mathcal{C}}
\newcommand{\cD}{\mathcal{D}}
\newcommand{\cF}{\mathcal{F}}
\newcommand{\cP}{\mathcal{P}}
\newcommand{\cQ}{\mathcal{Q}}
\newcommand{\cT}{\mathcal{T}}
\newcommand{\cM}{\mathcal{M}}
\newcommand{\cN}{\mathcal{N}}
\DeclareMathOperator{\Av}{Av}
\DeclareMathOperator{\St}{St}
\DeclareMathOperator{\Grid}{Grid}
\newcommand{\Inc}{\raisebox{-0.3mm}{\includegraphics[height=0.7em]{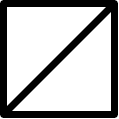}}}
\newcommand{\Dec}{\raisebox{-0.3mm}{\includegraphics[height=0.7em]{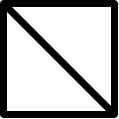}}}
\newcommand{\myarrow}{\scalebox{1.4}[1]{\color{black}$\mathclap{\curvearrowleft}\mkern2.2mu
    \mathclap{\curvearrowright}$}}
\newcommand{\flip}[1]{\begin{gathered}[b] \myarrow \\[-5pt] #1 \end{gathered}}
\newcommand{\colb}[1]{\color{blue} #1}
\newcommand{\colr}[1]{\color{red} #1}
\newcommand{\eps}{\varepsilon}
\newtheorem{proposition}{Proposition}[section]
\newtheorem{observation}[proposition]{Observation}
\newtheorem{lemma}[proposition]{Lemma}
\newtheorem{theorem}[proposition]{Theorem}
\newtheorem{corollary}[proposition]{Corollary}
\newtheorem{problem}{Open problem}
\title{Griddings of permutations and hardness of pattern matching\thanks{Supported by the GAUK project 1766318.}}
\author[1]{Vít Jelínek\thanks{\texttt{jelinek@iuuk.mff.cuni.cz}, supported by project 18-19158S of 
the Czech Science 
Foundation.}}
\author[1]{Michal Opler\thanks{\texttt{opler@iuuk.mff.cuni.cz}, supported by project 21-32817S of 
the Czech Science Foundation.}}
\author[2]{Jakub Pekárek\thanks{\texttt{pekarej@kam.mff.cuni.cz}}}
\affil[1]{Computer Science Institute, Charles University, Prague, Czechia}
\affil[2]{Department of Applied Mathematics, Charles University, Prague, Czechia}
\begin{document}

\maketitle

\begin{abstract}
We study the complexity of the decision problem known as \emph{Permutation Pattern Matching}, 
or~PPM. The input of PPM consists of a pair of permutations $\tau$ (the `text') and $\pi$ (the 
`pattern'), and the goal is to decide whether $\tau$ contains $\pi$ as a subpermutation. On general 
inputs, PPM is known to be NP-complete by a result of Bose, Buss and Lubiw. In this paper, we focus 
on restricted instances of PPM where the text is assumed to avoid a fixed (small) pattern~$\sigma$; 
this restriction is known as $\Av(\sigma)$-PPM. It has been previously shown that $\Av(\sigma)$-PPM 
is polynomial for any $\sigma$ of size at most 3, while it is NP-hard for any $\sigma$ containing a 
monotone subsequence of length four. 

In this paper, we present a new hardness reduction which allows us to show, in a uniform way, that 
$\Av(\sigma)$-PPM is hard for every $\sigma$ of size at least 6, for every $\sigma$ of size 5 
except the symmetry class of $41352$, as well as for every $\sigma$ symmetric to one of the three 
permutations $4321$, $4312$ and~$4231$. Moreover, assuming the exponential time hypothesis, none of 
these hard cases of $\Av(\sigma)$-PPM can be solved in time $2^{o(n/\log n)}$. Previously, such 
conditional lower bound was not known even for the unconstrained PPM problem.

On the tractability side, we combine the CSP approach of Guillemot and Marx with the structural 
results of Huczynska and Vatter to show that for any monotone-griddable permutation class $\cC$, 
PPM is polynomial when the text is restricted to a permutation from~$\cC$.
\end{abstract}

\section{Introduction}

\emph{Permutation Pattern Matching}, or PPM, is one of the most fundamental decision problems 
related to permutations. In PPM, the input consists of two permutations: $\tau$, referred to as the 
`text', and $\pi$, referred to as the `pattern'. The two permutations are represented as sequences 
of distinct integers. The goal is to determine whether the text $\tau$ contains the pattern 
$\sigma$, that is, whether $\tau$ has a subsequence order-isomorphic to $\sigma$ (see 
Section~\ref{sec-prelims} for precise definitions). 

Bose, Buss and Lubiw~\cite{BBL} have shown that the \PPM problem is NP-complete. Thus, most recent 
research into the complexity of PPM focuses either on para\-met\-rized or superpolynomial 
algorithms, or 
on restricted instances of the problem.

For a pattern $\pi$ of size $k$ and a text $\tau$ of size $n$, a straightforward brute-force approach 
can solve \PPM in time $O(n^{k+1})$. This was improved by Ahal and Rabinovich~\cite{AR08_subpattern} 
to $O(n^{0.47k+o(k)})$, and then by Berendsohn, Kozma and Marx~\cite{Berendsohn19} to $O(n^{k/4})$. 

When $k$ is large in terms of $n$, a brute-force approach solves \PPM in time $O(2^{n+o(n)})$. The 
first improvement upon this bound was obtained by Bruner and Lackner~\cite{BL_runs}, whose algorithm 
achieves the running time $O(1.79^n)$, which was in turn improved by Berendsohn, Kozma and 
Marx~\cite{Berendsohn19} to $O(1.6181^n)$.

Guillemot and Marx~\cite{Guillemot2014} have shown, perhaps surprisingly, that PPM is 
fixed-parameter tractable with parameter $k$, via an algorithm with running time $n\cdot 
2^{O(k^2\log k)}$, later improved to $n\cdot 2^{O(k^2)}$ by Fox~\cite{Fox}. 

\paragraph*{Restricted instances} 
Given that \PPM is NP-hard on general inputs,  various authors have sought to identify restrictions 
on the input permutations that would allow for an efficient pattern matching algorithm. These 
restrictions usually take the form of specifying that the pattern must belong to a prescribed set 
$\cC$ of permutations (the so-called \PPPM{$\cC$} problem), or that both the pattern and the text 
must belong to a set $\cC$ (known as \CPPM{$\cC$} problem). The most commonly considered choices for 
$\cC$ are sets of the form $\Av(\sigma)$ of all the permutations that do not contain a fixed pattern 
$\sigma$. 

Note that for the class $\Av(21)$, consisting of all the increasing permutations, \PPPM{$\Av(21)$} 
corresponds to the problem of finding the longest increasing subsequence in the given text, a well-known polynomially solvable problem~\cite{Schensted}. Another polynomially solvable case is 
\PPPM{$\Av(132)$}, which follows from more general results of Bose et al.~\cite{BBL}.

In contrast, for the class $\Av(321)$ of permutations avoiding a decreasing subsequence of length 3 
(or equivalently, the class of permutations formed by merging two increasing sequences), 
\PPPM{$\Av(321)$} is already NP-complete, as shown by Jelínek and Kynčl~\cite{JeKy}. In fact, 
Jelínek and Kynčl show that \PPPM{$\Av(\sigma)$} is polynomial for 
$\sigma\in\{1,12,21,132,231,312,213\}$ and NP-complete otherwise.

For the more restricted \CPPM{$\Av(\sigma)$} problem, a polynomial algorithm for 
$\sigma=321$ was found by Guillemot and Vialette~\cite{GV09_321} (see also Albert et 
al.~\cite{ALLV}), and it follows that \CPPM{$\Av(\sigma)$} is polynomial for any 
$\sigma$ of length at most~3. In contrast, the case $\sigma=4321$ (and by symmetry also 
$\sigma=1234$) is NP-complete~\cite{JeKy}. It follows that  \CPPM{$\Av(\sigma)$} is NP-complete 
whenever $\sigma$ contains $1234$ or $4321$ as subpermutation, and in particular, it is NP-complete 
for any $\sigma$ of length 10 or more.

In this paper, our main motivation is to close the gap between the polynomial and the NP-complete 
cases of \CPPM{$\Av(\sigma)$}. We develop a general type of hardness reduction, applicable to any 
permutation class that contains a suitable grid-like substructure. We then verify that for most 
choices of $\sigma$ large enough, the class $\Av(\sigma)$ contains the required substructure.
Specifically, we can prove that \CPPM{$\Av(\sigma)$} is NP-complete in the following cases:
\begin{itemize}
\item Any $\sigma$ of size at least 6.
\item Any $\sigma$ of size 5, except the symmetry type of $41352$ (i.e., the two symmetric 
permutations $41352$ and  $25314$).
\item Any $\sigma$ symmetric to one of $4321$, $4312$ or~$4231$.
\end{itemize}
Note that the list above includes the previously known case $\sigma=4321$. Our hardness reduction, 
apart from being more general than previous results, has also the advantage of being more efficient: 
we reduce an instance of 3-SAT of size $m$ to an instance of \PPM of size $O(m\log m)$. This implies, 
assuming the exponential time hypothesis (ETH), that none of these NP-complete cases of 
\CPPM{$\Av(\sigma)$} can be solved in time $2^{o(n/\log n)}$. Previously, this lower bound was not 
known to hold even for the unconstrained PPM problem.

\paragraph*{Grid classes}
The sets of permutations of the form $\Av(\sigma)$, i.e., the sets determined by a single forbidden 
pattern, are the most common type of permutation sets considered; however, such sets are not 
necessarily the most convenient tools to understand the precise boundary between polynomial 
and NP-complete cases of PPM. We will instead work with the more general concept of 
\emph{permutation class}, which is a set $\cC$ of permutations with the property that for any 
$\pi\in\cC$, all the subpermutations of $\pi$ are in $\cC$ as well. 

A particularly useful family of permutation classes are the so-called grid classes. When dealing 
with grid classes, it is useful to represent a permutation $\pi=\pi_1\pi_2\dotsb\pi_n$ by its 
\emph{diagram}, which is the set of points $\{(i,\pi_i)\;|\; i=1,\dotsc,n\}$. A grid class is defined 
in terms of a \emph{gridding matrix} $\cM$, whose entries are (possibly empty) permutation classes. 
We say that a permutation $\pi$ has an \emph{$\cM$-gridding}, if its diagram can be 
partitioned, by horizontal and vertical cuts, into an array of rectangles, where each rectangle 
induces in $\pi$ a subpermutation from the permutation class in the corresponding cell of~$\cM$. The 
permutation class $\Grid(\cM)$ then consists of all the permutations that have an $\cM$-gridding. 

To a gridding matrix $\cM$ we associate a \emph{cell graph}, which is the graph whose vertices are 
the entries in $\cM$ that correspond to infinite classes, with two vertices being adjacent if they 
belong to the same row or column of $\cM$ and there is no other infinite entry of $\cM$ between them.

In the griddings we consider in this paper, a prominent role is played by four specific 
classes, forming two symmetry pairs: one pair are the monotone classes $\Av(21)$ and $\Av(12)$, 
containing all the increasing and all the decreasing permutations, respectively. Note that any 
infinite class of permutations contains at least one of $\Av(12)$ and $\Av(12)$ as a subclass, by the 
Erd\H os--Szekeres theorem~\cite{ESz}. 

The other relevant pair of classes involves the so-called \emph{Fibonacci class}, denoted $\oplus21$, 
and its mirror image $\ominus12$. The Fibonacci class can be  defined as the class of permutations 
avoiding the three patterns $321$, $312$ and $231$, or equivalently, it is the class of 
permutations $\pi=\pi_1\pi_2\dotsb\pi_n$ satisfying $|\pi_i-i|\le 1$ for every~$i$. 

Griddings have been previously used, sometimes implicitly, in the analysis of special cases of PPM, 
where they were applied both in the design of polynomial algorithms~\cite{ALLV,GV09_321}, and in 
NP-hardness proofs~\cite{JeKy,Jelinek2020}. In fact, all the known NP-hardness arguments for special 
cases of \PPPM{$\cC$} are based on the existence of suitable grid subclasses of the class~$\cC$. In 
particular, previous results of the authors~\cite{Jelinek2020} imply that for any gridding matrix 
$\cM$ that only involves monotone or Fibonacci cells, \PPPM{$\Grid(\cM)$} is polynomial when the cell 
graph of $\cM$ is a forest, and it is NP-complete otherwise. Of course, if \PPPM{$\Grid(\cM)$} is 
polynomial then \CPPM{$\Grid(\cM)$} is polynomial as well. However, the results in this paper 
identify a broad family of examples where \CPPM{$\Grid(\cM)$} is polynomial, while 
\PPPM{$\Grid(\cM)$} is known to be NP-complete.

Our main hardness result, Theorem~\ref{thm-hardness}, can be informally rephrased as a claim that 
\CPPM{$\cC$} is hard for a class $\cC$ whenever $\cC$ contains, for each $n$ and a fixed $\eps>0$, a 
grid subclass whose cell graph is a path of length $n$, and at least $\eps n$ of its cells are 
Fibonacci classes. A somewhat less technical consequence, Corollary~\ref{cor:cycle-hard}, says that 
\CPPM{$\Grid(\cM)$} is NP-hard whenever the cell graph of $\cM$ is a cycle with no three vertices in 
the same row or column and with at least one Fibonacci cell.

Corollary~\ref{cor:cycle-hard} is, in a certain sense, best possible, since our main tractability 
result, Theorem~\ref{thm-monotone}, states that \CPPM{$\cC$} is polynomial whenever $\cC$ is 
\emph{monotone-griddable}, that is, $\cC\subseteq\Grid(\cM)$, where $\cM$ contains only monotone (or 
empty) cells. Moreover, by a result of Huczynska and Vatter~\cite{Huczynska}, every class $\cC$ that 
does not contain $\oplus 21$ or $\ominus 12$ is monotone griddable. Taken together, these results 
show that \CPPM{$\Grid(\cM)$} is polynomial whenever no cell of $\cM$ contains $\oplus 21$ or 
$\ominus 12$ as a subclass.

\section{Preliminaries}\label{sec-prelims}
A \emph{permutation of length $n$} is a sequence $\pi_1, \dotsc, \pi_n$ in which each element of the set $[n] = \lbrace 1, 2, \dots, n\rbrace$ appears exactly once. When writing out short permutations explicitly, we shall 
omit all punctuation and write, e.g., $15342$ for the permutation $1,5,3,4,2$.  The \emph{permutation diagram} of $\pi$ is the set of points $S_\pi = \{(i,\pi_i)\;|\;i\in[n]\}$ in the plane. Observe that no two points from $S_\pi$ share the same $x$- or $y$-coordinate. We say that such a set is in \emph{general position}. Note that we blur the distinction between permutations and their permutation diagrams, e.g., we shall refer to `the point of $\pi$'.

For a point $p$ in the plane, we let $p.x$ denote its horizontal coordinate and $p.y$ its vertical coordinate. Two finite sets $S, R \subseteq \mathbb{R}^2$ in general position are \emph{order-isomorphic}, or just \emph{isomorphic} for short, if there is a bijection $f\colon S \to R$ such that for any pair of points $p \neq q$ of $R$ we have $f(p).x < f(q).x$ if and only if $p.x < q.x$, and $f(p).y < f(p).y$ if and only if $p.y < q.y$; in such case, the function $f$ is the \emph{isomorphism} from $S$ to~$R$. The \emph{reduction} of a finite set $S \subseteq \mathbb{R}^2$ in general position is the unique permutation $\pi$ such that $S$ is isomorphic to $S_\pi$.

% We write $\pi = \red(S)$. Not used anywhere else in the text.

A permutation $\tau$ \emph{contains} a permutation $\pi$, denoted by $\pi \preceq \tau$, if there is a subset $P \subseteq S_\tau$ that is isomorphic to $S_\pi$. Such a subset is then called \emph{an occurrence} of $\pi$ in $\tau$, and the isomorphism from $S$ to $P$ is an \emph{embedding} of $\pi$ into~$\tau$. If $\tau$ does not contain $\pi$, we say that $\tau$ \emph{avoids}~$\pi$.

A \emph{permutation class} is any down-set $\cC$ of permutations, i.e., a set $\cC$ such that if $\pi \in \cC$ and $\sigma \preceq \pi$  then also $\sigma \in \cC$. For a permutation 
$\sigma$, we let $\Av(\sigma)$ denote the class of all $\sigma$-avoiding permutations. We shall 
throughout use the symbols $\Inc$ and $\Dec$ as short-hands 
for the class of increasing permutations $\Av(21)$ and the class of decreasing permutations 
$\Av(12)$.

Observe that for every permutation $\pi$ of length at most $m$, the permutation diagram $S_\pi$ is a subset of the set $\{p \mid \frac{1}{2} < p.x < m +\frac{1}{2} \wedge \frac{1}{2} < p.y < m + \frac{1}{2}\}$, called \emph{$m$-box}.  This fact motivates us to extend the usual permutation symmetries to bijections of the whole $m$-box. In particular, there are eight symmetries generated by:
\begin{description}
	\item[reversal] which reflects the $m$-box through its vertical axis, i.e., the image of a point $p$ is the point $(m + 1 - p.x, p.y)$,
	\item[complement] which reflects the $m$-box through its horizontal axis, i.e., the image of a point $p$ is the point $(p.x, m + 1 - p.y)$,
	\item[inverse] which reflects the $m$-box through its ascending diagonal axis, i.e., the image of a point $p$ is the point $(p.y, p.x)$.
\end{description}

We say that a permutation $\pi$ is \emph{symmetric} to a permutation $\sigma$ if $\pi$ can be 
transformed into $\sigma$ by any of the eight symmetries generated by reversal, complement and 
inverse. The \emph{symmetry type}\footnote{We chose the term `symmetry type' over the more 
customary `symmetry class', to avoid possible confusion with the notion of permutation class.} of a 
permutation $\sigma$ is the set of all the permutations symmetric to~$\sigma$.

The symmetries generated by reversal, complement and inverse can be applied not only to individual 
permutations but also to their classes. Formally, if $\Psi$ is one of the eight symmetries and $\cC$ 
is a permutation class, then $\Psi(\cC)$ refers to the set $\{\Psi(\sigma)|\; \sigma\in\cC\}$.  We 
may easily see that $\Psi(\cC)$ is again a permutation class. 

Consider a pair of permutations $\pi$ of length $n$ and $\sigma$ of length $m$. 
The \emph{inflation} of a point $p$ of $\pi$ by $\sigma$ is the reduction of the point set
\[S_\pi \setminus \{p\} \cup \left\{\left(p.x + \frac{q.x}{m+1}, p.y+\frac{q.y}{m+1}\right) \;\middle|\; q \in S_\sigma\right\}.\]
Informally, we replace the point $p$ with a tiny copy of $\sigma$.

The \emph{direct sum} of $\pi$ and $\sigma$, denoted by $\pi \oplus \sigma$, is the result of inflating the `1' in 12 with $\pi$ and then inflating the `2' with $\sigma$. Similarly, the \emph{skew sum} of $\pi$ and $\sigma$, denoted by $\pi \ominus \sigma$, is the result of inflating the `2' in 21 with $\pi$ and then inflating the `1' with $\sigma$. If a permutation $\tau$ cannot be obtained as direct sum of two shorter permutations, we say that $\tau$ is \emph{sum-indecomposable} and if it cannot be obtained as a skew sum of two shorter permutations, we say that it is \emph{skew-indecomposable}. Moreover, we say that a permutation class $\cC$ is \emph{sum-closed} if for any $\pi, \sigma \in \cC$ we have $\pi \oplus \sigma \in \cC$. We define \emph{skew-closed} analogously.

We define the \emph{sum completion} of a permutation $\pi$ to be the permutation class
\[\oplus \pi = \{\sigma_1 \oplus \sigma_2\oplus \cdots \oplus \sigma_k \mid \sigma_i \preceq \pi \text{ for all } i \le k \in \mathbb{N}\}.\]
Analogously, we define the \emph{skew completion} $\ominus \pi$ of $\pi$. The class $\oplus21$ is known as the \emph{Fibonacci class}.

\subsection{Grid classes}
When we deal with matrices, we number their rows from bottom to top to be consistent with the Cartesian coordinates we use for permutation diagrams. For the same reason, we let the column coordinates precede the row coordinates; in particular,  a \emph{$k\times\ell$ matrix} is a matrix with $k$ columns and $\ell$ rows, and for a matrix $\cM$, we let $\cM_{i,j}$ denote its entry in column $i$ and row~$j$. 

A matrix $\cM$ whose entries are permutation classes is called a \emph{gridding 
  matrix}. Moreover, if the entries of $\cM$ belong to the set $\{\Inc,\Dec,\emptyset\}$ then we say 
that $\cM$ is a \emph{monotone gridding matrix}. 

A \emph{$k \times \ell$-gridding} of a permutation 
$\pi$ of length $n$ are two weakly increasing sequences $1 = c_1 \le \cdots \le c_{k+1} = n+1$ and $1 = r_1 \le \cdots \le r_{\ell+1} = n + 1$.
For each $i \in [k]$ and $j \in [\ell]$, we call the set of points $p \in S_\pi$ such that $c_i \le p.x < c_{i+1}$ and $r_j \le p.y < r_{j+1}$ the \emph{$(i,j)$-cell} of $\pi$.
An \emph{$\cM$-gridding} of a permutation $\pi$ is a $k \times 
\ell$-gridding such that the reduction of the $(i,j)$-cell of $\pi$ belongs to the class~$\cM_{i,j}$ for every $i \in [k]$ and $j \in [\ell]$. If $\pi$ has an $\cM$-gridding, then $\pi$ is said 
to be \emph{$\cM$-griddable}, and the \emph{grid class of $\cM$}, denoted by $\Grid(\cM)$, is the class of all $\cM$-griddable permutations.

The \emph{cell graph} of the gridding matrix $\cM$, denoted $G_\cM$, is the graph whose vertices 
are pairs $(i,j)$ such that $\cM_{i,j}$ is an infinite class, with two vertices being adjacent if they 
share a row or a column of $\cM$ and all the entries between them are finite or empty. See Figure~\ref{fig:grid-class}. We slightly abuse the notation and use the vertices of $G_\cM$ for indexing $\cM$, i.e., for a vertex $v$ of $G_\cM$, we write $\cM_v$ to denote the corresponding entry.

A \emph{proper-turning path} in $G_\cM$ is a path $P$ such that no three vertices of $P$ share the same 
row or column. Similarly, a \emph{proper-turning cycle} in $G_\cM$ is a cycle $C$ such that no three vertices of $C$ share the same row or column.

\begin{figure}
  \centering
  \raisebox{-0.5\height}{\includegraphics[width=0.4\textwidth]{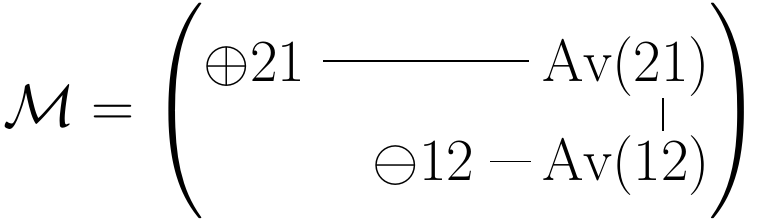}}
  \hspace{0.5in}
  \raisebox{-0.5\height}{\includegraphics[scale=0.5]{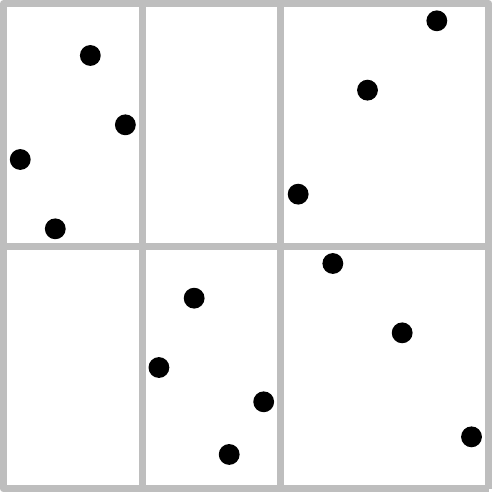}}
  \caption{A gridding matrix $\cM$ on the left and a permutation equipped with an $\cM$-gridding on the right. Empty 
    entries of $\cM$ are omitted and the edges of $G_\cM$ are displayed inside $\cM$.}
  \label{fig:grid-class}
\end{figure}

Let $\pi$ be a permutation, and let $(c,r)$ be its $k\times\ell$-gridding, where 
$c=(c_1,\dotsc,c_{k+1})$ and $r=(r_1,\dotsc,r_{\ell+1})$. A permutation $\pi$ together with a 
gridding $(c,r)$ form a \emph{gridded permutation}. When dealing with gridded permutations, it is 
often convenient to apply symmetry transforms to individual columns or rows of the gridding. 
Specifically, the \emph{reversal of the $i$-th column} of $\pi$ is the operation which generates a 
new $(c,r)$-gridded permutation $\pi'$ by taking the diagram of $\pi$, and then reflecting the 
rectangle $[c_i,c_{i+1}-1]\times[1,n]$ in the diagram through its vertical axis, producing the 
diagram of the new permutation~$\pi'$. Note that $\pi'$ differs from $\pi$ by having all the entries 
at positions $c_i, c_i+1,\dotsc,c_{i+1}-1$ in reverse order.  If $c_{i+1}\le c_i+1$, then $\pi'=\pi$.

Similarly, the \emph{complementation of the $j$-th row} of the $(c,r)$-gridded permutation $\pi$ is 
obtained by taking the rectangle $[1,n]\times[r_j,r_{j+1}-1]$ and turning it upside down, obtaining 
a permutation diagram of a new permutation.

Column reversals and row complementations can also be applied to gridding matrices: a reversal of a 
column $i$ in a gridding matrix $\cM$ simply replaces all the classes appearing in the entries of 
the $i$-th column by their reverses; a row complementation is defined analogously. 

We often need to perform several column reversals and row complementations at once. To describe such 
operations succinctly, we introduce the concept of $k\times\ell$-orientation. A 
\emph{$k\times\ell$-orientation} is a pair of functions $\cF=(f_c,f_r)$ with $f_c\colon[k]\to 
\{-1,1\}$ and $f_r\colon[\ell]\to\{-1,1\}$. To \emph{apply} the orientation $\cF$ to a 
$k\times\ell$-gridded permutation $\pi$ means to create a new permutation $\cF(\pi)$ by reversing in 
$\pi$ each column $i$ for which $f_c(i)=-1$ and complementing each row $j$ for which $f_r(j)=-1$. 
Note that the order in which we perform the reversals and complementations does not affect the final 
outcome. Note also that $\cF$ is an involution, that is, $\cF(\cF(\pi))=\pi$ for any 
$k\times\ell$-gridded permutation~$\pi$. 

We may again also apply $\cF$ to a gridding matrix $\cM$. By performing, in some order, the row 
reversals and column complementations prescribed by $\cF$ on the matrix $\cM$, we obtain a new 
gridding matrix $\cF(\cM)$. For instance, taking the gridding matrix $\left(\begin{smallmatrix} \Dec 
&\Inc\\ \Inc&\Dec\end{smallmatrix}\right)$ and applying reversal to its first column yields the 
gridding matrix $\left(\begin{smallmatrix} \Inc &\Inc\\ \Dec&\Dec\end{smallmatrix}\right)$. 
Observe that if $(c,r)$ is an $\cM$-gridding of a permutation $\pi$, then the same gridding $(c,r)$ 
is also an $\cF(\cM)$-gridding of the permutation~$\cF(\pi)$. 

Let $\cM$ be a monotone gridding matrix. An orientation $\cF$ of $\cM$ is \emph{consistent} if 
all the nonempty entries  of $\cF(\cM)$ are equal to~$\Inc$. For instance, the matrix 
$\left(\begin{smallmatrix} \Dec &\Inc\\ \Inc&\Dec\end{smallmatrix}\right)$ has a consistent 
orientation acting by reversing the first column and complementing the first row, while the matrix 
$\left(\begin{smallmatrix} \Inc &\Inc\\ \Inc&\Dec\end{smallmatrix}\right)$ has no consistent 
orientation.
% We remark that  Vatter and Waton~\cite{Vatter2011} have shown that any monotone 
% gridding matrix whose cell graph is acyclic has a consistent orientation.

A vital role in our arguments is played by the concept of monotone griddability. We say that a class 
$\cC$ is \emph{monotone-griddable} if there exists a monotone gridding matrix $\cM$ such that $\cC$ 
is contained in $\Grid(\cM)$. Huczynska and Vatter~\cite{Huczynska} provided a neat and useful 
characterization of monotone-griddable classes.

\begin{theorem}[Huczynska and Vatter~\cite{Huczynska}]
\label{thm-monotone-griddable}
A permutation class $\cC$ is monotone-griddable if and only if it contains neither the Fibonacci class $\oplus21$ nor its symmetry $\ominus12$.
\end{theorem}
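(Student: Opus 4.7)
The equivalence splits into an easy direction and a hard direction.

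\emph{Easy direction.}  Assume $\cC\subseteq\Grid(\cM)$ for some $k\times\ell$ monotone gridding matrix $\cM$.  I would show that the permutation $\tau_N=(21)^{\oplus N}$ fails to be $\cM$-griddable once $N>k\ell+k+\ell$, which already rules out $\oplus 21\subseteq\cC$.  The diagram of $\tau_N$ consists of $N$ tiny decreasing pairs marching up the diagonal.  In any $\cM$-gridding of $\tau_N$, each such pair is either swallowed whole by a single cell, in which case that cell must be labelled $\Dec$, or else it is separated by one of the $k-1$ vertical or $\ell-1$ horizontal cuts.  Since each cut can split at most one pair, at least $N-(k+\ell-2)$ pairs land intact inside $\Dec$ cells; because $\tau_N$ avoids $321$, no $\Dec$ cell can host more than one such pair, so we need at least $N-(k+\ell-2)$ distinct $\Dec$-cells, exceeding the $k\ell$ available for $N$ large.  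The symmetric argument, using increasing cells and $\tau_N'=(12)^{\ominus N}$, excludes $\ominus 12\subseteq\cC$.

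\emph{Hard direction.}  Suppose $\cC$ contains neither $\oplus 21$ nor $\ominus 12$.  Since $\oplus 21$ is itself a down-set and every $\sigma\in\oplus 21$ satisfies $\sigma\preceq(21)^{\oplus n}$ for some $n$, the hypothesis amounts to $\cC\subseteq\Av((21)^{\oplus n})$ for some $n$; symmetrically $\cC\subseteq\Av((12)^{\ominus m})$ for some $m$.  Writing $\cD_{n,m}$ for the intersection, it suffices to monotone-gridd $\cD_{n,m}$.  I would induct on $n+m$, with base cases $n=1$ giving $\Inc$ and $m=1$ giving $\Dec$.  For the step, take $\pi\in\cD_{n,m}$ and look at its sum-decomposition $\pi=\pi_1\oplus\cdots\oplus\pi_k$.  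Since every sum-indecomposable permutation of length $\geq 2$ contains $21$, the assumption $\pi\in\Av((21)^{\oplus n})$ forces at most $n-1$ of the $\pi_i$ to be non-trivial; the trivial ones collapse into at most $n$ increasing runs placed in $\Inc$-cells.  Each non-trivial block is sum-indecomposable of length $\geq 2$ and still lies in $\Av((12)^{\ominus m})$, so skew-decomposing it gives at most $m-1$ non-trivial skew-pieces (by the symmetric argument) together with decreasing runs placed in $\Dec$-cells.  Arranging the griddings of the runs and the recursively obtained griddings of the remaining pieces side by side inside one large but fixed-size matrix produces the required monotone gridding.

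\emph{Main obstacle.}  The delicate point is ensuring that the alternating sum/skew recursion really does shrink the avoidance parameters at every step, so that the size of the final gridding matrix is bounded by a function of $n$ and $m$ alone.  A non-trivial skew-piece of a sum-indecomposable block does not a priori avoid $(21)^{\oplus(n-1)}$, so the naive induction does not close; one needs the finer observation that a sum-indecomposable permutation avoiding $(21)^{\oplus n}$, once skew-decomposed into its top and bottom parts, distributes its $21$-capacity strictly between them, so that each skew-component is genuinely governed by a smaller parameter.  Making this bookkeeping precise, and checking that the combined gridding really fits into one monotone gridding matrix of size depending only on $n+m$, is the core technical content.
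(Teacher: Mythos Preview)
The paper does not supply its own proof of this theorem; it is quoted as a result of Huczynska and Vatter and used as a black box. So there is nothing to compare against at the level of the paper itself.

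On the merits of your plan: the easy direction is fine. Your counting argument for why $(21)^{\oplus N}$ escapes any fixed $k\times\ell$ monotone gridding is correct and clean.

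The hard direction, however, has a real gap, and the fix you sketch does not work. Your ``finer observation'' --- that when a sum-indecomposable $\pi\in\Av\bigl((21)^{\oplus n}\bigr)$ is skew-decomposed, its $21$-capacity is distributed strictly between the skew-components --- is false. Take
\[
\pi \;=\; (21)^{\oplus(n-1)} \,\ominus\, (21)^{\oplus(n-1)}.
\]
This permutation is sum-indecomposable (its leftmost value lies in the upper half) and avoids $(21)^{\oplus n}$, because two occurrences of $21$ lying in different skew-components are never in direct-sum position with respect to each other. Yet each skew-component is exactly $(21)^{\oplus(n-1)}$, so neither is governed by a smaller parameter. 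Already the simplest instance $\pi=32541$ (with $n=3$) exhibits this: it is sum-indecomposable, avoids $214365$, and skew-decomposes as $2143\ominus 1$ with the non-trivial component equal to $(21)^{\oplus 2}$ itself. Thus the alternating sum/skew recursion with parameters $(n,m)$ does not terminate as stated, and the bound on the gridding matrix size in terms of $n+m$ is not established.

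The argument in Huczynska and Vatter does not proceed by this kind of alternating sum/skew decomposition of individual permutations. If you wish to reconstruct a proof, a workable route is to bound, uniformly over $\pi\in\cC$, the number of vertical strips needed so that each strip is monotone as a sequence of values; avoidance of $(21)^{\oplus n}$ and $(12)^{\ominus m}$ enters by controlling how many times the left-to-right ``ascent/descent'' behaviour can switch. This sidesteps the failure above, since it does not rely on any drop in the $\oplus 21$-parameter across a skew-cut.
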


A monotone grid class $\Grid(\cC\;\cD)$ where both $\cC$ and $\cD$ are non-empty is called a 
\emph{horizontal monotone juxtaposition}. Analogously, a \emph{vertical monotone juxtaposition} is a 
monotone grid class $\Grid\left(\begin{smallmatrix}\cC \\ \cD\end{smallmatrix}\right)$ with both 
$\cC$ and $\cD$ non-empty. A \emph{monotone juxtaposition} is simply a class that is either a 
horizontal or a vertical monotone juxtaposition.

\subsection{Pattern matching complexity}

In this paper, we deal with the complexity of the decision problem known as 
\CPPM{$\cC$}. For a permutation class $\cC$, the input of \CPPM{$\cC$} is a pair of permutations 
$(\pi,\tau)$ with both $\pi$ and $\tau$ belonging to $\cC$. An instance of \CPPM{$\cC$} is then 
accepted if $\tau$ contains $\pi$, and rejected if $\tau$ avoids~$\pi$. In the context of 
pattern-matching, $\pi$ is referred to as the \emph{pattern}, while $\tau$ is the \emph{text}.

Note that an algorithm for \CPPM{$\cC$} does not need to verify that the two input permutations 
belong to the class~$\cC$, and the algorithm may answer arbitrarily on inputs that fail to fulfill 
this constraint. Decision problems that place this sort of validity restrictions on their inputs are 
commonly known as \emph{promise problems}. 

Our NP-hardness results for \CPPM{$\cC$} are based on a general reduction scheme from the classical 
3-SAT problem. Given that \CPPM{$\cC$} is a promise problem, the reduction must map instances of 
3-SAT to valid instances of \CPPM{$\cC$}, i.e., the instances where both $\pi$ and $\tau$ belong 
to~$\cC$.

On top of NP-hardness arguments, we also provide time-complexity lower bounds for the hard cases
of  \CPPM{$\cC$}. These lower bounds are conditioned on the \emph{exponential-time hypothesis} 
(ETH), a classical hardness assumption which states that there is a constant $\eps>0$ such 
that 3-SAT cannot be solved in time $O(2^{\eps n})$, where $n$ is the number of variables of the 
3-SAT instance. In particular, ETH implies that 3-SAT cannot be solved in time~$2^{o(n)}$.

Given an instance $(\pi,\tau)$ of \CPPM{$\cC$}, we always use $n$ to denote the length of the 
text~$\tau$. We also freely assume that $\pi$ has length at most $n$ since otherwise the instance 
can be straightforwardly rejected. Following established practice, we express our complexity bounds 
for \CPPM{$\cC$} in terms of~$n$. Note that inputs of \CPPM{$\cC$} of size $n$ actually require 
$\Theta(n\log n)$ bits to encode.

\section{Hardness of PPM}

In this section, we present the main technical hardness result and then derive its several 
corollaries. However, we first need to introduce one more definition. 

We say that a permutation class $\cC$ has the \emph{$\cD$-rich path property} for a class $\cD$ if there is a positive constant $\eps$ such that for every $k$, the class $\cC$ contains a grid subclass whose cell graph is a proper-turning path of length $k$ with at least $\eps \cdot k$ entries equal to $\cD$.
Moreover, we say that $\cC$ has the \emph{computable $\cD$-rich path property}, if $\cC$ has the $\cD$-rich path property and there is an algorithm that, for a given $k$, outputs a witnessing proper-turning path of length $k$ with at least $\eps \cdot k$ copies of $\cD$ in time polynomial in~$k$.

\begin{theorem}
\label{thm-hardness}
Let $\cC$ be a permutation class with the computable $\cD$-rich path property for a 
non-monotone-griddable class $\cD$. Then \CPPM{$\cC$} is NP-complete, and unless ETH fails, there 
can be no algorithm that solves \CPPM{$\cC$}
\begin{itemize}%[noitemsep]
 \item in time $2^{o(n /\log{n})}$ if $\cD$ moreover contains any monotone juxtaposition,
 \item in time $2^{o(\sqrt{n})}$ otherwise.
\end{itemize}
\end{theorem}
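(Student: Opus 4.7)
The plan is a gadget-based reduction from 3-SAT that uses the proper-turning path supplied by the $\cD$-rich path property as a skeleton along which we string together variable and clause gadgets. Since $\cD$ is not monotone-griddable, Theorem~\ref{thm-monotone-griddable} gives $\oplus 21 \subseteq \cD$ or $\ominus 12 \subseteq \cD$; I may assume the former up to symmetry. Given a 3-SAT formula $\varphi$ with $m$ clauses, I would invoke the algorithm promised by computability of the $\cD$-rich path property to build in polynomial time a proper-turning path $P$ of length $L$ inside $\cC$, a constant fraction of whose cells equals $\oplus 21$, with $L = \Theta(m \log m)$ in the first sub-case of the theorem and $L = \Theta(m^2)$ in the second. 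Both the pattern $\pi$ and the text $\tau$ are then constructed inside $\Grid(\cM_P)$, where $\cM_P$ is the gridding matrix realising $P$, which guarantees $\pi, \tau \in \cC$ and hence respects the \CPPM{$\cC$} promise.

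The workhorse of the construction is the standard encoding inside a Fibonacci cell $\oplus 21$: the permutation $(21)^k$ admits exponentially many occurrences inside a mildly inflated copy of itself, each occurrence corresponding to a length-$k$ binary string determined by which half of each inflated pair is chosen. I would designate certain cells as \emph{variable cells}, using this freedom to select a global truth assignment, and others as \emph{clause cells}, which restrict the admissible matchings to those that correspond to satisfying assignments. Consistency between the bits chosen in different cells is propagated along $P$ through the row or column shared by two consecutive cells, using the shared-axis mechanism of the cycle-based hardness proofs in~\cite{Jelinek2020}; the proper-turning hypothesis is precisely what guarantees that the shared axis of two consecutive cells of $P$ is unambiguous, so this propagation is well defined.

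To achieve the sharper $\Theta(m \log m)$ length, I would encode the index of each variable in binary with $\lceil \log_2 m \rceil$ bits, so that each clause is checked by a sub-path of $O(\log m)$ cells containing an address-comparison gadget that matches each of the clause's three literals against the corresponding bits of the global assignment. A pure Fibonacci cell cannot implement such a comparator on its own, because the comparison requires a cell supporting two simultaneous monotone runs along a shared axis; this is exactly what a monotone juxtaposition provides, which is why the first sub-case of the theorem assumes one is available inside $\cD$. In the weaker sub-case where no monotone juxtaposition sits inside $\cD$, I would fall back to a unary address gadget of $\Theta(m)$ cells per clause, giving total length $\Theta(m^2)$. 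Translating the path length $L$ into instance size $N = \Theta(L)$ and applying ETH to $\varphi$ via the sparsification lemma then yields the two claimed lower bounds $2^{o(N/\log N)}$ and $2^{o(\sqrt{N})}$.

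The main obstacle, I expect, lies in designing the clause-checking and address-comparison gadgets so that they are simultaneously (i) sound and complete with respect to the underlying SAT instance, (ii) contained in the cells of $\Grid(\cM_P)$ so that the promise $\pi, \tau \in \cC$ is preserved, and (iii) short enough to meet the target path length. Soundness is the delicate part: every matching of $\pi$ into $\tau$ must be forced to correspond to a single coherent assignment together with one satisfying literal per clause, so spurious matchings arising from unintended long-range interactions between cells have to be ruled out by the gadget design. This is where the proper-turning hypothesis together with a careful placement of non-Fibonacci connector cells between the active ones will do most of the work.
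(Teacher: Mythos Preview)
Your high-level plan is right: reduce 3-SAT along the proper-turning path supplied by the $\cD$-rich path property, use the $\oplus21$ cells to encode binary choices, and exploit a monotone juxtaposition in $\cD$ for the sharper bound. But the size accounting does not survive scrutiny. You set $L=\Theta(m\log m)$ (respectively $\Theta(m^2)$) and then assert $N=\Theta(L)$, i.e.\ constant content per cell. In a path, information can only propagate linearly through shared rows/columns, so whatever truth assignment you fix in the ``variable cells'' must be carried through every subsequent cell on its way to the clause gadgets. An $n$-bit assignment cannot pass through a cell of $O(1)$ points, so the proposed address-lookup architecture forces $\Omega(n)$ points per cell, blowing the total size up to $\Omega(nm\log m)$ in the juxtaposition case and destroying the claimed $2^{o(n/\log n)}$ bound.

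The paper sidesteps exactly this obstacle with a different architecture. There are only $O(\log m)$ (respectively $O(m)$) cells along the path, each carrying $\Theta(m)$ atomic pairs; the phases are assignment, then \emph{multiplication} (make $d_j$ copies of the bit for variable $x_j$, one per literal occurrence), then \emph{sorting} (rearrange the $3m$ copies so that the three literals of each clause become adjacent), then evaluation of all clauses in parallel. The sorting phase is the bottleneck: with flip gadgets alone it is a bubble-sort in $O(m)$ layers, while a monotone juxtaposition inside $\cD$ lets one implement a single step of stable two-bucket sort in one cell, hence radix sort in $O(\log m)$ layers. That, not an ``address-comparison'' gadget, is what the juxtaposition buys. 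You are also missing the anchor mechanism: the paper inflates two extremal points of $P_1$ and $T_1$ by increasing sequences of length $|\tau'|+1$ so that \emph{every} embedding of $\pi''$ into $\tau''$ is forced to be grid-preserving; without such a device the ``spurious matchings'' you flag are not ruled out.
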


We remark, without going into detail, that the two lower bounds we obtained under ETH are close to 
optimal. It is clear that the bound of $2^{o(n/\log{n})}$ matches, up to the $\log n$ term in the 
exponent, the trivial $2^{O(n)}$ brute-force algorithm for PPM. Moreover, the lower bound of  
$2^{o(\sqrt{n})}$ for \CPPM{$\cC$} also cannot be substantially improved without adding assumptions 
about the class~$\cC$. Consider for instance the class $\cC=\left(\begin{smallmatrix}\Dec &\Inc\\ 
\oplus21&\Dec \end{smallmatrix}\right)$. As we shall see in Proposition~\ref{pro-cyclehard}, this 
class has the computable $\oplus21$-rich path property, and therefore the $2^{o(\sqrt{n})}$ 
conditional lower bound applies to it. However, by using the technique of Ahal and 
Rabinovich~\cite{AR08_subpattern}, which is based on the concept of treewidth of permutations, we can 
solve \CPPM{$\cC$} (even \PPPM{$\cC$}) in time $n^{O(\sqrt{n})}$. This is because we can show that a 
permutation $\pi\in\cC$ of size $n$ has treewidth at most $O(\sqrt{n})$. We omit the details of the 
argument here.

\subsection{Overview of the proof of Theorem~\ref{thm-hardness}}
The proof of Theorem~\ref{thm-hardness} is based on a reduction from the well-known 3-SAT problem. 
The individual steps of the construction are rather technical, and we therefore begin with a general overview 
if the reduction. In Subsection~\ref{ssec-reduction}, we then present the reduction in full detail, together 
with the proof of correctness.

Suppose that $\cC$ is a class with the computable $\cD$-rich path property, where $\cD$ is not 
monotone griddable. This means that $\cD$ contains the Fibonacci class $\oplus21$ or its reversal 
$\ominus12$ as subclass. Suppose then, without loss of generality, that $\cD$ contains~$\oplus 21$.

To reduce 3-SAT to \CPPM{$\cC$}, consider a 3-SAT formula $\Phi$, with $n$ variables $x_1,\dotsc,x_n$ 
and $m$ clauses. We may assume that each clause of $\Phi$ has exactly 3 literals. 

Let $L=L(m,n)$ be an integer whose value will be specified later. By the $\cD$-rich path property, 
$\cC$ contains a grid subclass $\Grid(\cM)$ where the cell graph of $\cM$ is a path of length 
$L$, in which a constant fraction of cells is equal to $\cD$. 

To simplify our notation in this high-level overview, we will assume that the cell graph 
of $\cM$ corresponds to an increasing staircase. More precisely, the cells of $\cM$ representing 
infinite classes can be arranged into a sequence $C_1,C_2,\dotsc,C_L$, where $C_1$ is the 
bottom-left cell $\cM_{1,1}$ of $\cM$, each odd-numbered cell $C_{2i-1}$ 
corresponds to the diagonal cell $\cM_{i,i}$, and each even numbered cell $C_{2i,2i}$ corresponds to 
$\cM_{i+1,i}$. All the remaining cells of $\cM$ are empty. To simplify the exposition even further, 
we will assume that each odd-numbered cell of the path is equal to $\Inc$ and each even-numbered 
cell is equal to~$\cD$. See Figure~\ref{fig-reduction}.

\begin{figure}
  \centering
  \includegraphics{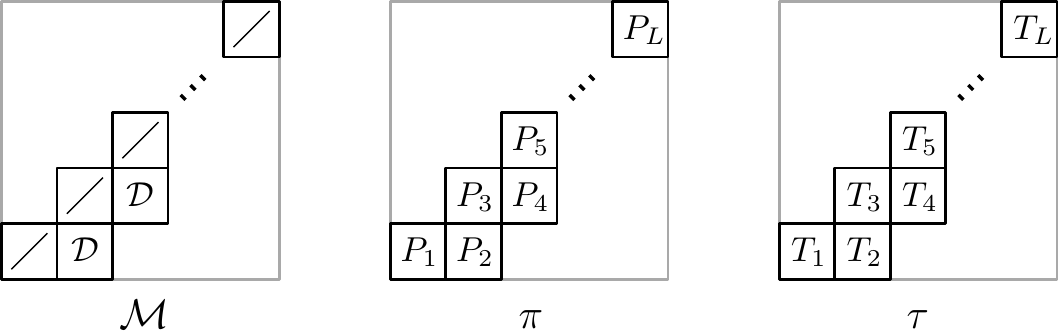}
\caption{The gridding matrix $\cM$, the gridded permutation $\pi$ (the pattern) and the gridded 
permutation $\tau$ (the text), used in the simplified overview of the proof of 
Theorem~\ref{thm-hardness}.}\label{fig-reduction}
\end{figure}

With the gridding matrix $\cM$ specified above, we will construct two $\cM$-gridded permu\-tations,
the pattern $\pi$ and the text $\tau$, such that $\pi$ can be embedded into $\tau$ if and only if 
the formula $\Phi$ is satisfiable. We will describe $\pi$ and $\tau$ geometrically, as permutation 
diagrams, which are partitioned into blocks by the $\cM$-gridding. We let $P_i$ denote 
the part of $\pi$ corresponding to the cell $C_i$ of $\cM$, and similarly we let $T_i$ be the part 
of $\tau$ corresponding to~$C_i$. 

To get an intuitive understanding of the reduction, it is convenient to first restrict our attention 
to \emph{grid-preserving embeddings} of $\pi$ into $\tau$, that is, to embeddings which map the 
elements of $P_i$ to elements of $T_i$ for each~$i$. 

The basic building blocks in the description of $\pi$ and $\tau$ are the \emph{atomic pairs}, which 
are specific pairs of points appearing inside a single block $P_i$ or~$T_i$. It is a feature of the 
construction that in any grid preserving embedding of $\pi$ into $\tau$, an atomic pair inside a 
pattern block $P_i$ is mapped to an atomic pair inside the corresponding text block~$T_i$. Moreover, 
each atomic pair in $\pi$ or $\tau$ is associated with one of the variables $x_1,\dotsc,x_n$ 
of~$\Phi$, and any grid-preserving embedding will maintain the association, that is, atomic pairs 
associated to a variable $x_j$ inside $\pi$ will map to atomic pairs associated to $x_j$ in~$\tau$.

To describe $\pi$ and $\tau$, we need to specify the relative positions of the atomic pairs in two 
adjacent blocks $P_i$ and $P_{i+1}$ (or $T_i$ and $T_{i+1}$). These relative positions are given by 
several typical configurations, which we call \emph{gadgets}. Several examples of gadgets are 
depicted in Figure~\ref{fig:gadgets}. In the figure, the pairs of points enclosed by an ellipse are 
atomic pairs. The choose, multiply and merge gadgets are used in the construction of $\tau$, 
while the pick and follow gadgets are used in~$\pi$. The copy gadget will be used in both. 
We also need more complicated gadgets, namely the \emph{flip gadgets} of 
Figure~\ref{fig:flip-gadgets}, which span more than two consecutive blocks. In all cases, the atomic 
pairs participating in a single gadget are all associated to the same variable of~$\Phi$.

\begin{figure}
  \centering
  \raisebox{-0.5\height}{\stackanchor{\includegraphics[width=0.3\textwidth]{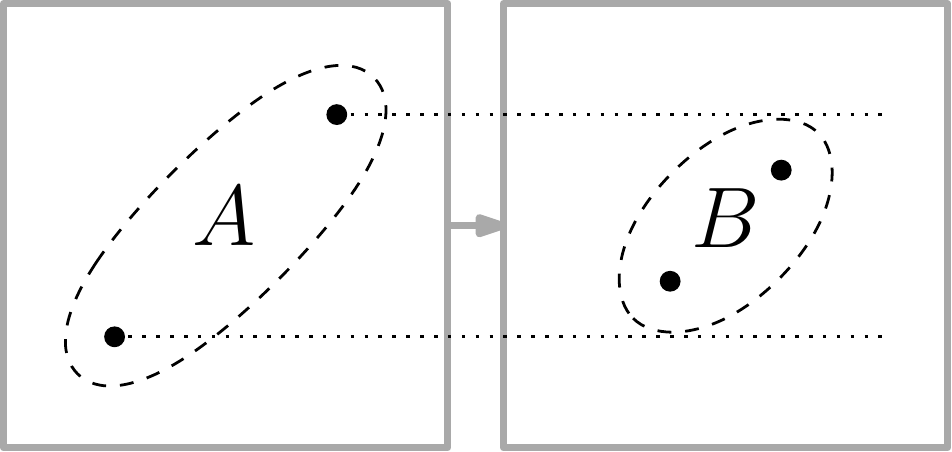}}{\small The 
copy gadget}}
  \hspace{0.15in}
  \raisebox{-0.5\height}{\stackanchor{\includegraphics[width=0.3\textwidth]{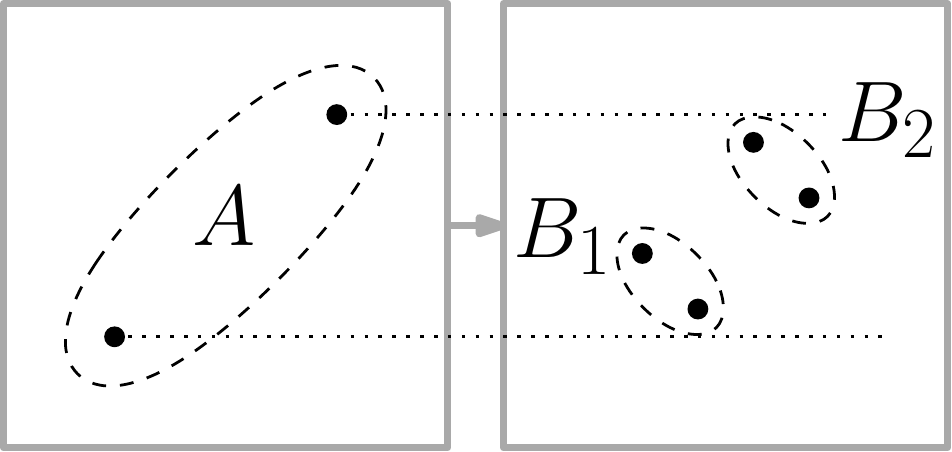}}{\small 
The choose gadget}}
  \hspace{0.15in}
  \raisebox{-0.5\height}{\stackanchor{\includegraphics[width=0.3\textwidth]{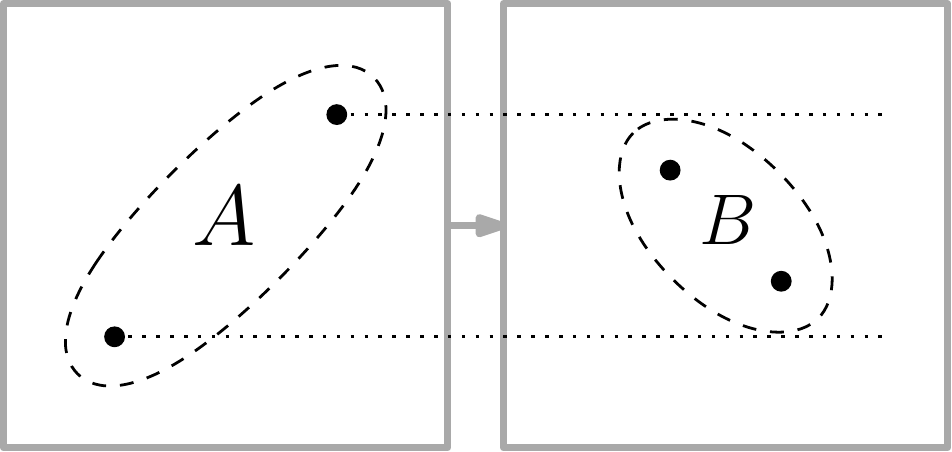}}{\small The 
pick gadget}}\\[0.1in]
  
  \raisebox{-0.5\height}{\stackanchor{\includegraphics[width=0.3\textwidth]{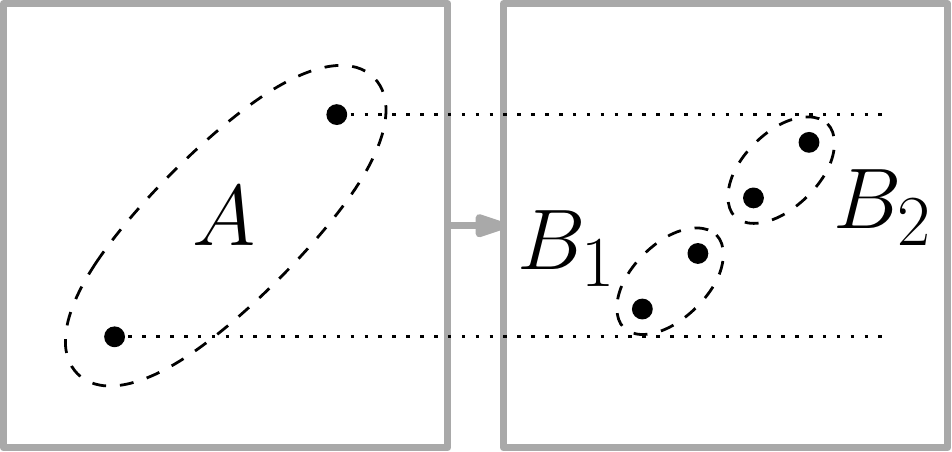}}{\small 
The multiply gadget}}
  \hspace{0.15in}
  \raisebox{-0.5\height}{\stackanchor{\includegraphics[width=0.3\textwidth]{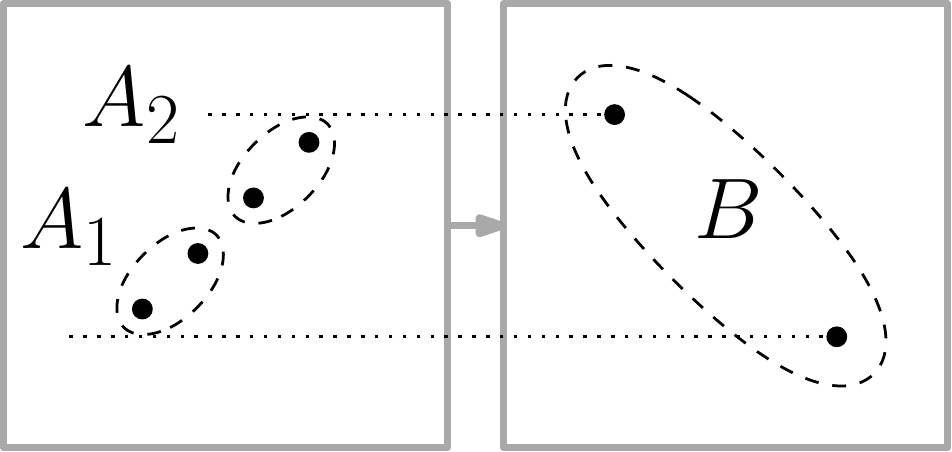}}{\small 
The merge gadget}}
  \hspace{0.15in}
  \raisebox{-0.5\height}{\stackanchor{\includegraphics[width=0.3\textwidth]{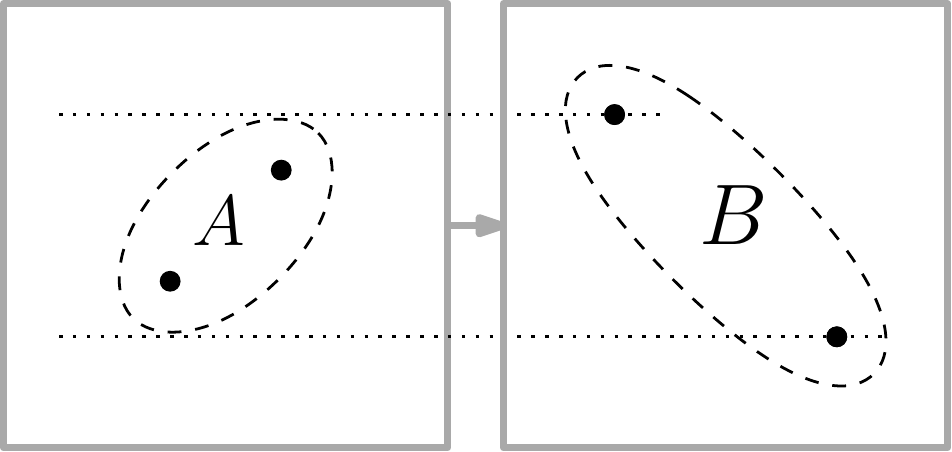}}{\small 
The follow gadget}}
  \caption{The constructions of simple gadgets. The tile $Q_i$ is always on the left and the tile 
$Q_{i+1}$ is on the right. The dotted lines show the relative vertical order of points.}
  \label{fig:gadgets}
\end{figure}

The sequence of pattern blocks $P_1, P_2,\dotsc,P_L$, as well as their corresponding text blocks 
$T_1,\dotsc,T_L$, is divided into several contiguous parts, which we call \emph{phases}. We now 
describe the individual phases in the order in which they appear.

\subparagraph{The initial phase and the assignment phase.} The initial phase involves a single 
pattern block $P_1$ and the corresponding text block $T_1$. Both $P_1$ and $T_1$ consist of an 
increasing sequence of $2n$ points, divided into $n$ consecutive atomic pairs 
$X^1_1,X^1_2, \dotsc, X^1_n\subseteq P_1$ and $Y^1_1,Y^1_2, \dotsc, Y^1_n\subseteq T_1$, numbered in 
increasing order. The pairs $X^1_j$ and $Y^1_j$ are both associated to the variable $x_j$. Clearly 
any embedding of $P_1$ into $T_1$ will map the pair $X^1_j$ to the pair $Y^1_j$, for each $j\in[n]$.

The initial phase is followed by the assignment phase, which also involves only one pattern block 
$P_2$ and the corresponding text block~$T_2$. $P_2$ will consist of an increasing sequence of $n$ 
atomic pairs $X^2_1,X^2_2, \dotsc, X^2_n$, where each $X^2_j$ is a decreasing pair, i.e., a copy of 
21. Moreover, $X^1_j\cup X^2_j$ forms the pick gadget, so the first two pattern blocks can be viewed 
as a sequence of $n$ pick gadgets stacked on top of each other. 

The block $T_2$ then consists of $2n$ atomic pairs $\{Y^2_j,Z^2_j;\;j\in[n]\}$, positioned in such a 
way that $Y^1_j\cup Y^2_j\cup Z^2_j$ is a choose gadget. Thus, $T_1\cup T_2$ is a sequence of $n$ 
choose gadgets stacked on top of each other, each associated with one of the variables of~$\Phi$. 

In a grid-preserving embedding of $\pi$ into $\tau$, each pick gadget $X^1_j\cup X^2_j$ must be 
mapped to the corresponding choose gadget $Y^1_j\cup Y^2_j\cup Z^2_j$, with $X^1_j$ mapped to 
$Y^1_j$, and $X^2_j$ mapped either to $Y^2_j$ or to $Z^2_j$. There are thus $2^n$ grid-preserving 
embeddings of $P_1\cup P_2$  into $T_1\cup T_2$, and these embeddings encode in a natural way to 
the $2^n$ assignments of truth values to the variables of~$\Phi$. Specifically, if $X^2_j$ is mapped 
to $Y^2_j$, we will say that $x_j$ is false, while if $X^2_j$ maps to $Z^2_j$, we say that $x_j$ is 
true. The aim is to ensure that an embedding of $P_1\cup P_2$ into $T_1\cup T_2$ can be extended 
to an embedding of $\pi$ into $\tau$ if and only if the assignment encoded by the embedding 
satisfies~$\Phi$.

Each atomic pair that appears in one of the text blocks $T_2, T_3,\dotsc,T_L$ is not only associated 
with a variable of $\Phi$, but also with its truth value; that is, there are `true' and `false' 
atomic pairs associated with each variable~$x_j$. The construction of $\pi$ and $\tau$ ensures that 
in an embedding of $\pi$ into $\tau$ in which $X^2_j$ is mapped to $Y^2_j$ (corresponding to setting 
$x_j$ to false), all the atomic pairs associated to $x_j$ in the subsequent stages of $\pi$ will map 
to false atomic pairs associated to $x_j$ in $\tau$, and conversely, if $X^2_j$ is mapped to 
$Z^2_j$, then the atomic pairs of $\pi$ associated to $x_j$ will only map to the true atomic pairs 
associated to $x_j$ in~$\tau$.

\begin{figure}
  \centering
  \raisebox{-0.5\height}{\includegraphics[width=0.47\textwidth]{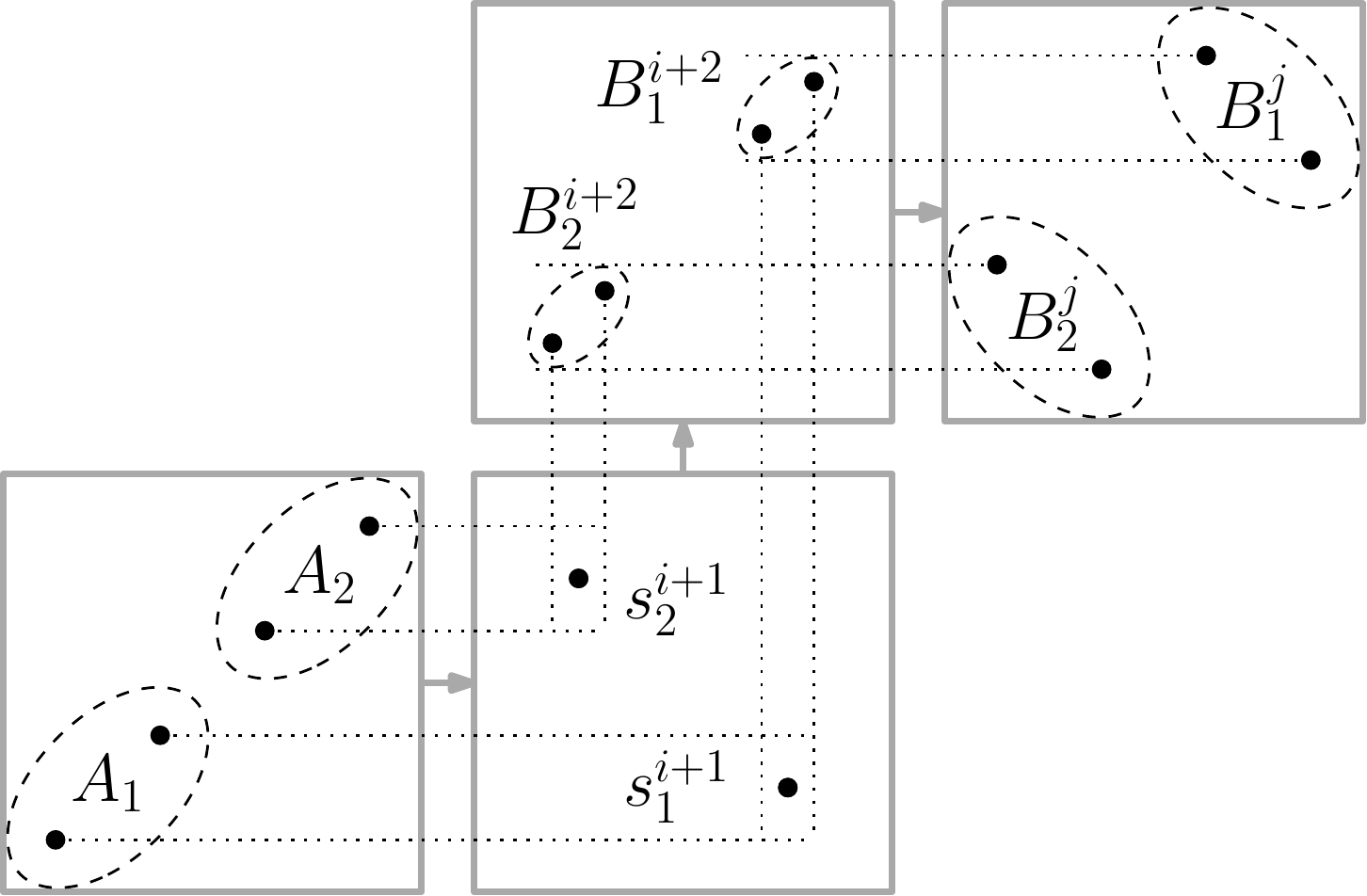}}
  \hspace{1em}\hfill
  \raisebox{-0.5\height}{\includegraphics[width=0.47\textwidth]{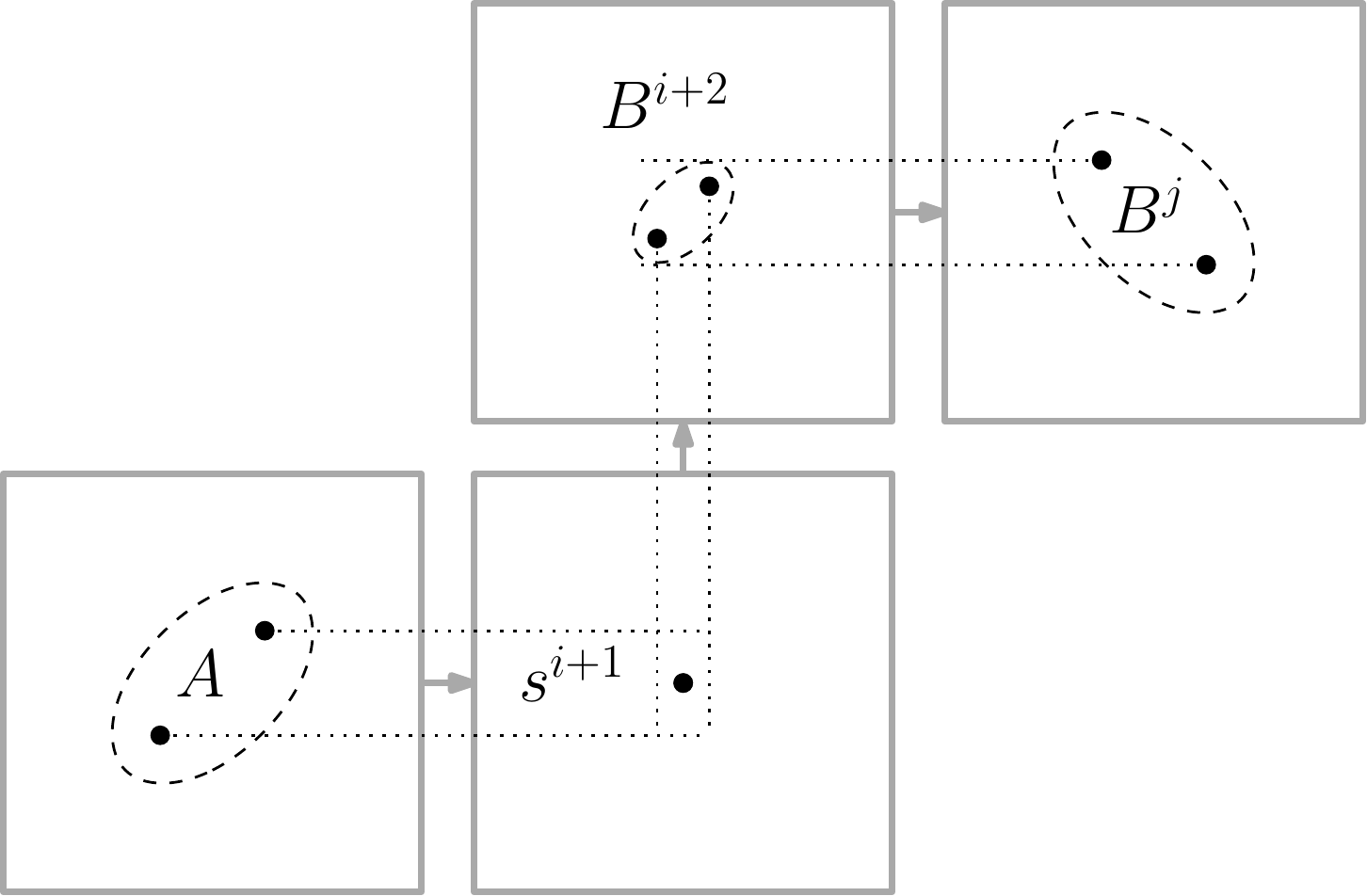}}
  \caption{A flip text gadget on the left and a flip pattern gadget on the right. The first tile 
pictured is $Q_i$ and the last tile is $Q_j$ where $j = i+3$. As before, the dotted lines show the 
relative order of points. }
  \label{fig:flip-gadgets}
\end{figure}

\subparagraph{The multiplication phase.} The purpose of the multiplication phase is to `duplicate' 
the information encoded in the assignment phase. Without delving into the technical details, we 
describe the end result of the multiplication phase and its intended behaviour with respect to 
embeddings. Let $d_j$ be the number of occurrences (positive or negative) of the variable $x_j$ 
in~$\Phi$. Note that $d_1+d_2+\dotsb+d_n=3m$, since $\Phi$ has $m$ clauses, each of them with three 
literals. Let $P_k$ and $T_k$ be the final pattern block and text block of the multiplication 
phase. Then $P_k$ is an increasing sequence of $3m$ increasing atomic pairs, among which there are 
$d_j$ atomic pairs associated to~$x_j$. Moreover, the pairs are ordered in such a way that the 
$d_1$ pairs associated to $x_1$ are at the bottom, followed by the $d_2$ pairs associated to $x_2$ 
and so on. The structure of $T_k$ is similar to $P_k$, except that $T_k$ has $6m$ atomic pairs. In 
fact, we may obtain $T_k$ from $P_k$ by replacing each atomic pair $X^k_i\subseteq P_k$ associated 
to a variable $x_j$ by two adjacent atomic pairs $Y^k_i, Z^k_i$, associated to the same variable, 
where $Y^k_i$ is false and $Z^k_i$ is true. 

It is useful to identify each pair $X^k_i\subseteq P_k$ as well as the corresponding two pairs 
$Y^k_i, Z^k_i\subseteq T_k$ with a specific occurrence of $x_j$ in~$\Phi$. Thus, each literal in 
$\Phi$ is represented by one atomic pair in $P_k$ and two adjacent atomic pairs of opposite truth 
values in~$T_k$.

The blocks $P_3,\dotsc,P_k$ and $T_3,\dotsc,T_k$ are constructed in such a way that any embedding of 
$\pi$ into $\tau$ that encodes an assignment in which $x_j$ is false has the property that all the 
atomic pairs in $P_k$ associated to $x_j$ are mapped to the false atomic pairs of $T_k$ associated to 
$x_j$, and similarly, when $x_j$ is encoded as true in the assignment phase, the pairs of $P_k$ 
associated to $x_j$ are only mapped to the true atomic pairs of $T_k$. Thus, the mapping of any 
atomic pair of $P_k$ encodes the information on the truth assignment of the associated variable.

The multiplication phase is implemented by a combination of multiply gadgets and flip text gadgets 
in $\tau$, and copy gadgets and flip pattern gadgets in~$\pi$. It requires no more than $O(\log m)$ 
blocks in $\pi$ and $\tau$, i.e., $k=O(\log m)$.

\subparagraph{The sorting phase.} The purpose of the sorting phase is to rearrange the relative 
positions of the atomic pairs. While at the end of the multiplication phase, the pairs representing 
occurrences of the same variable appear consecutively, after the sorting phase, the pairs 
representing literals belonging to the same clause will appear consecutively. More precisely, 
letting $P_\ell$ and $T_\ell$ denote the last pattern block and the last text block of the sorting 
phase, $P_\ell$ has the same number of atomic pairs associated to a given variable $x_j$ as $P_k$, 
and similarly for $T_\ell$ and~$T_k$. If $K_1,\dotsc,K_m$ are the clauses of $\Phi$, then for each 
clause $K_j$, $P_\ell$ contains three consecutive atomic pairs corresponding to the three literals in 
$K_j$, and $T_\ell$ contains the corresponding six atomic pairs, again appearing consecutively. 
Similarly as in $P_k$ and $T_k$, each atomic pair in $P_\ell$ must map to an atomic pair in $T_\ell$ 
representing the same literal and having the correct truth value encoded in the assignment phase.

To prove Theorem~\ref{thm-hardness}, we need two different ways to implement the sorting phase, 
depending on whether the class $\cD$ contains a monotone juxtaposition or not. The first 
construction, which we call \emph{sorting by gadgets}, does not put any extra assumptions on~$\cD$. 
However, it may require up to $\Theta(m)$ blocks to perform the sorting, that is $\ell=\Theta(m)$. 

The other implementation of the sorting phase, which we call \emph{sorting by juxtapositions} is 
only applicable when $\cD$ contains a monotone juxtaposition, and it can be performed with only 
$O(\log m)$ blocks. The difference between the lengths of the two versions of sorting is the reason 
for the two different lower bounds in Theorem~\ref{thm-hardness}.

\subparagraph{The evaluation phase.} The final phase of the construction is the evaluation phase. 
The purpose of this phase is to ensure that for any embedding of $\pi$ into $\tau$, the truth 
assignment encoded by the embedding satisfies all the clauses of~$\Phi$. For each clause $K_j$, we 
attach suitable gadgets to the atomic pairs in $P_\ell$ and $T_\ell$ representing the literals 
of~$K_j$. Using the fact that the atomic pairs representing the literals of a given clause are 
consecutive in $P_j$ and~$T_j$, this can be done for all the clauses simultaneously, with only 
$O(1)$ blocks in $\pi$ and~$\tau$.  This completes an overview of the hardness reduction proving 
Theorem~\ref{thm-hardness}.

When the reduction is performed with sorting by gadgets, it produces permutations $\pi$ and $\tau$ 
of size $O(m^2)$, since we have $L=O(m)$ blocks and each block has size~$O(m)$. When sorting is done 
by juxtapositions, the number of blocks drops to $L=O(\log m)$, hence $\pi$ and $\tau$ have size 
$O(m\log m)$. ETH implies that 3-SAT with $n$ variables and $m$ clauses cannot be solved in 
time $2^{o(m+n)}$~\cite{IPZ}. From this, the lower bounds from Theorem~\ref{thm-hardness} follow.

The details of the reduction, as well as the full correctness proof, are presented in the following subsections.

\subsection{Details of the hardness reduction}\label{ssec-reduction}

Our job is to construct a pair of permutations $\pi$ and $\tau$, both having a gridding 
corresponding to a $\cD$-rich path, with the property that the embeddings of $\pi$ into $\tau$ will 
simulate satisfying assignments of a given 3-SAT formula. We will describe the two permutations in 
terms of their diagrams, or more precisely, in terms of point sets order-isomorphic to the diagrams, 
constructed inside the prescribed gridding. 

The formal description of the point sets faces several purely technical complications: firstly, 
some monotone cells of the given $\cD$-rich path correspond to $\Inc$, others to $\Dec$, and it 
would be a nuisance to distinguish the two symmetric possibilities in every step of the construction. 
We will sidestep the issue by fixing a consistent orientation of the gridding which will, informally 
speaking, allow us to treat the monotone cells as if they were all increasing.

Another complication stems from the fact that when describing the contents of a given 
cell in the gridding, the coordinates of the points in the cell depend on the position of the cell 
within the gridding matrix. It would be more convenient to assume that each cell has its own local 
coordinate system whose origin is in the bottom-left corner of the cell. We address this
issue by treating each cell as an independent `tile', and only in the end of the construction we 
translate the individual tiles (after applying appropriate symmetries implied by the consistent 
orientation) to their proper place in the gridding.

To make these informal considerations rigorous, we now introduce the concept of $\cF$-assembly.

\subsubsection{\texorpdfstring{$\cF$}{F}-assembly}

Recall that a \emph{$k \times \ell$ orientation} is a pair of functions $\cF = (f_c, f_r)$, $f_c\colon [k] \to \{-1,1\}$ and $f_r\colon [\ell] \to \{-1,1\}$.

A finite subset $P$ of the $m$-box in general position is called an \emph{$m$-tile} and a \emph{$k 
\times \ell$ family of $m$-tiles} is a set $\cP = \{P_{i,j} \mid i \in [k], j\in [\ell]\}$ where 
each $P_{i,j}$ is an $m$-tile. Let $\cF = (f_c, f_r)$ be a $k \times \ell$ orientation and let $\cP$ be 
a family of $m$-tiles $P_{i,j}$ for $i \in [k]$, $j \in [\ell]$. The \emph{$\cF$-assembly of $\cP$} 
is the point set $S$ defined as follows.

First, we define for every $i 
\in [k]$, $j \in [\ell]$ the point set
\[P'_{i,j} = \{p + (i \cdot m, j \cdot m) \mid p \in \Phi_i(\Psi_j(P_{i,j}))\}.\]
where $\Phi_i$ is an identity if $f_c(i) = 1$ and reversal otherwise, while $\Psi_j$ is an identity 
if $f_r(j) = 1$ and complement otherwise.
Next, we set $S = \bigcup P'_{i,j}$. The set $S$ might not be 
in a general position. If 
that is the case, we rotate $S$ clockwise by a tiny angle such that there are no longer any points 
with a shared coordinate, and at the same time, the order of all the other points is preserved. 
We call the resulting point set  the \emph{$\cF$-assembly} of~$\cP$.
See Figure~\ref{fig:F-assembly}.

% Let $\cM$ be a $k \times \ell$ gridding matrix. We say that the \emph{image of $\cM_{i,j}$ under $\cF$}
% is the class $\Phi_i(\Psi_j(\cM_{i,j}))$.
% The \emph{$\cF$-image of $\cM$}, denoted by $\cF(\cM)$, is then the $k 
% \times \ell$ gridding matrix defined as $\cF(\cM)_{i,j} = \Phi_i(\Psi_j(\cM_{i,j}))$.
% Observe that this operation is an involution as $\cF(\cF(\cM)) = \cM$.

% \begin{observation}
% Let $\cM$ be a $k \times l$ gridding matrix, let $\cF$ be a $k \times \ell$ orientation and $\cP$ a 
% $k \times \ell$ family of $m$-tiles. If for every $i \in [k]$ and $j \in [\ell]$ 
% the reduction of $P_{i,j}$ belongs to the class $\cF(\cM)_{i,j}$ then the reduction of the 
% $\cF$-assembly of  $\cP$ belongs to $\Grid(\cM)$.
% \end{observation}
The $\cF$-assembly allows us to describe our constructions more succinctly and independently from 
the actual matrix $\cM$. Recall that an orientation $\cF$ of a monotone gridding matrix $\cM$ is 
\emph{consistent}, if every non-empty entry of $\cF(\cM)$ is equal to~$\Inc$. Gridding matrices 
whose cell graph is a path always admit a consistent orientation -- this follows from the following 
more general result of Vatter and Waton~\cite{Vatter2011}.

\begin{lemma}[Vatter and Waton~\cite{Vatter2011}]
\label{lem-acyclictrans}
Any monotone gridding matrix $\cM$ whose cell graph $G_\cM$ is acyclic has a consistent orientation.
\end{lemma}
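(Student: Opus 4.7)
The plan is to recast the existence of a consistent orientation as a $2$-coloring problem on an auxiliary signed bipartite graph. Let $B$ be the bipartite graph whose vertex set is the disjoint union of the columns and rows of $\cM$; for every non-empty cell $(i,j)$, include an edge between column $i$ and row $j$ and label it with a sign $s_{i,j}\in\{+1,-1\}$, set to $+1$ exactly when $\cM_{i,j}=\Inc$. Since reversing column $i$ and complementing row $j$ each flip the monotonicity of the cell $(i,j)$, an orientation $\cF=(f_c,f_r)$ is consistent precisely when the relation $f_c(i)\cdot f_r(j)=s_{i,j}$ holds at every non-empty cell.

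The core of the proof is to show that $B$ is itself a forest whenever $G_\cM$ is. Suppose for contradiction that $B$ contains a cycle visiting distinct columns $c_1,\dots,c_t$ and distinct rows $r_1,\dots,r_t$ alternately. Its $2t$ edges correspond to $2t$ distinct non-empty cells of $\cM$ that form a rectilinear closed loop. Between two consecutive such cells sharing a row $r_b$, I would insert the unique $G_\cM$-path through the non-empty cells of row $r_b$ lying between them -- these non-empty cells along a single row form a path in $G_\cM$ by construction -- and analogously between consecutive cells sharing a column. Concatenating all these segments yields a closed walk $W$ in $G_\cM$. The segments of $W$ lie in pairwise distinct rows or columns of $\cM$ and therefore share no edges, so $W$ is a closed trail of positive length; consequently it contains a cycle in $G_\cM$, contradicting acyclicity.

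Once $B$ is known to be a forest, a consistent orientation is produced by a routine propagation: in each tree component of $B$, pick a root vertex, assign it the value $+1$, and propagate to every other vertex using the rule $f_c(i)f_r(j)=s_{i,j}$ along each edge. Absence of cycles in $B$ guarantees that different propagation paths never impose contradictory values, and isolated columns or rows in $B$ may be assigned arbitrarily. I expect the delicate point to be the passage from a cycle in $B$ to a cycle in $G_\cM$: one must verify carefully that the segments of the constructed closed walk share no edges, which ultimately reduces to the fact that each edge of $G_\cM$ belongs to exactly one row or column of $\cM$ and each row or column of $\cM$ is used by at most one segment of the walk.
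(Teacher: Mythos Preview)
The paper does not include a proof of this lemma; it is stated as a known result of Vatter and Waton and cited without argument. Your proposal therefore cannot be compared against a proof in the paper, but it can be assessed on its own merits.

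Your argument is correct and is essentially the standard one. The reformulation $f_c(i)f_r(j)=s_{i,j}$ is exactly right, and the key reduction---showing that a cycle in the signed bipartite graph $B$ forces a cycle in $G_\cM$---is handled properly. For a monotone gridding matrix every non-empty cell is infinite, so the non-empty cells in a fixed row (or column) really do form a path in $G_\cM$, and your segment-concatenation yields a closed walk. Your observation that each edge of $G_\cM$ lies in a unique row or in a unique column, together with the distinctness of the rows $r_1,\dots,r_t$ and columns $c_1,\dots,c_t$ in the $B$-cycle, is precisely what guarantees that the walk is a closed trail and hence contains a cycle. The propagation step on the resulting forest is routine. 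One small point you might make explicit in a final write-up: since $B$ is simple, any cycle in $B$ has length at least $4$, so $t\ge 2$ and each row/column segment has positive length, ensuring the closed trail is nontrivial.
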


\begin{figure}
  \centering
  $\begin{array}{cc}
    P_{1,2} = \vcenter{\hbox{\includegraphics[scale=0.45]{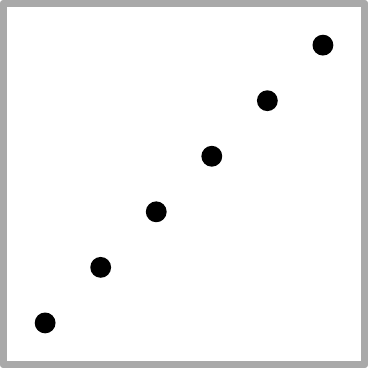}}}  & 
    P_{2,2} = \vcenter{\hbox{\includegraphics[scale=0.45]{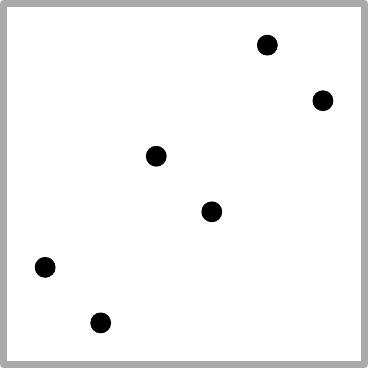}}}\\[0.3in]
    P_{1,1} = \vcenter{\hbox{\includegraphics[scale=0.45]{inc-tile}}} &
    P_{2,1} = \vcenter{\hbox{\includegraphics[scale=0.45]{fib-tile}}} 
  \end{array}$
  \hspace{0.2in}
  $\longrightarrow$
  \hspace{0.2in}
  \raisebox{-0.5\height}{\includegraphics[scale=0.45]{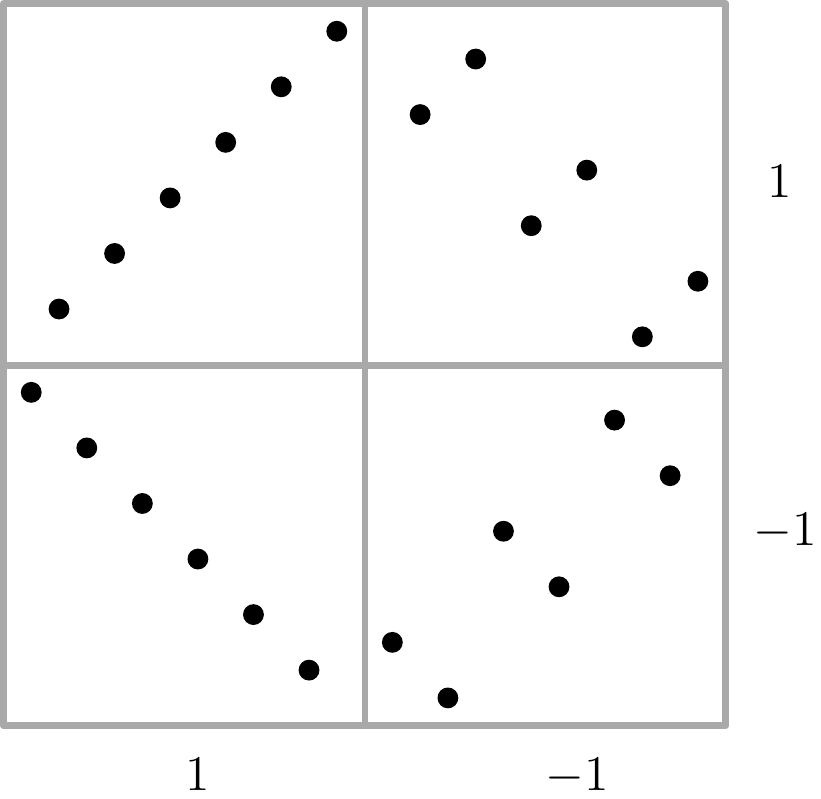}}
  \caption[Example of an $\cF$-assembly]{
A $2 \times 2$ family of tiles $\cP$ on the left and its $\cF$-assembly on the right for 
a $2 \times 2$ orientation $\cF$ given next to each row and column on the right. Notice that the
$\cF$-assembly of $\cP$ would belong to the class 
$\Grid \begin{psmallmatrix} \Inc & \ominus 12\\ \Dec & \oplus21 \end{psmallmatrix}$ 
for any $\cP$ such that $P_{1,1}, P_{1,2} \in \Inc$ and $P_{2,1},P_{2,2} \in \oplus21$.
}\label{fig:F-assembly}
\end{figure}

\subsubsection{The reduction}

We describe a reduction from 3-SAT to \CPPM{$\cC$}.  Let $\Phi$ be a given 
3-CNF formula with $n$ variables $x_1, x_2,\dotsc,x_n$ and 
$m$ clauses $K_1, K_2,\dotsc,K_m$. We furthermore assume, without loss of generality, 
that each clause contains exactly three literals. We will construct permutations $\pi, \tau \in \cC$ 
such that $\Phi$ is satisfiable if and only if $\tau$ contains $\pi$.

Let $\cM$ be a $g \times h$ gridding matrix such that $\Grid(\cM)$ is a subclass of $\cC$, the cell 
graph $G_\cM$ is a proper-turning path of sufficient length to be determined later, a constant 
fraction of its entries is equal to $\cD$, and the remaining entries of the path are monotone. We 
aim to construct $\pi$ and $\tau$ such that they both belong to $\Grid(\cM)$. We remark that this is 
the only step where the computable part of the computable $\cD$-rich path property comes into play.

First, we label the vertices of the path as $p_1, p_2, p_3, \dotsc$ choosing the direction such that 
at least half of the $\cD$-entries share a row with their predecessor. We claim that there is a $g 
\times h$ orientation $\cF$ such that the class $\cF(\cM)_{i,j}$ is equal to $\Inc$ for every monotone entry 
$\cM_{i,j}$  and the class $\cF(\cM)_{i,j}$ contains $\oplus 21$ for every $\cD$-entry $\cM_{i,j}$.
To see this, consider a gridding matrix $\cM'$ obtained by replacing every 
$\cD$-entry in $\cM$ with $\Inc$ if $\cD$ contains $\oplus 21$ or with $\Dec$ if $\cD$ contains 
$\ominus 12$. It then suffices to apply Lemma~\ref{lem-acyclictrans} on $\cM'$.

Our plan is to simultaneously construct two $g \times h$ families of $2n$-tiles $\cP$ and $\cT$ and 
then set $\pi$ and $\tau$ to be the $\cF$-assemblies of $\cP$ and $\cT$, respectively. We abuse the 
notation and for any $g \times h$ family of tiles $\cQ$ (in particular for $\cP$ and $\cT$), we use 
$Q_i$ instead of $Q_{p_i}$ to denote the tile corresponding to the $i$-th cell of the path.

For now, we will only consider restricted embeddings that respect the partition into tiles and deal 
with arbitrary embeddings later. Let $\pi$ be an 
$\cF$-assembly of $\cP$ and $\tau$ an $\cF$-assembly of $\cT$; we then say that an embedding of 
$\pi$ into $\tau$ is \emph{grid-preserving} if the 
image of tile $P_{i,j}$ is mapped to the image of $T_{i,j}$ for every $i$ and $j$. We slightly abuse 
the terminology in the case of grid-preserving embeddings and say that a point $q$ in the tile 
$P_{i,j}$ is mapped to a point $r$ in the tile $T_{i,j}$ instead of saying that the image of $q$ 
under the $\cF$-assembly is mapped to the image of $r$ under the $\cF$-assembly.

Many of the definitions to follow are stated for a general $g \times h$ family of tiles $\cQ$ as we 
later apply them on both $\cP$ and $\cT$. We say that a pair of points $r, q$ in the tile $Q_i$ 
\emph{sandwiches} a set of points $A$ in the tile $Q_{i+1}$ if for every point $t \in A$
\begin{itemize}%[noitemsep]
  \item $r.y < t.y < q.y$ in case $p_i$ and $p_{i+1}$ occupy a common row, or
  \item $r.x < t.x < q.x$ in case $p_i$ and $p_{i+1}$ occupy a common column.
\end{itemize}

Moreover, we say that a pair of points $r, q$ \emph{strictly sandwiches} a set of points $A$ if the 
pair $r, q$ sandwiches $A$ and there exists no other point $t \in( Q_{i+1} \setminus A)$ sandwiched 
by $r,q$.

\subsubsection{Simple gadgets}

We construct the tiles in $\cT$ and $\cP$ from pairs of points that we call \emph{atomic pairs}. 
Before constructing the actual tile families, we describe several simple gadgets that we will 
utilize later. All these gadgets include either one atomic pair $A = (r,q)$ or two atomic pairs 
$A_\alpha = (r_\alpha, q_\alpha)$ for $\alpha \in \{1,2\}$ in the tile $Q_i$ and one atomic pair $B 
= (s, t)$ or two atomic pairs $B_\alpha = (s_\alpha, t_\alpha)$ for $\alpha \in \{1,2\}$ in the tile 
$Q_{i+1}$. Moreover, the coordinates of the points in $Q_{i+1}$ are fully determined from the 
coordinates of the points in $Q_i$. We assume that each tile is formed as direct sum of the 
corresponding pieces of individual gadgets; in other words, if $A,B \subseteq Q_i$ are point sets 
of two different gadgets, then either $A$ lies entirely to the right and above $B$ or vice versa.

We describe the gadgets in the case when $p_i$ and $p_{i+1}$ share a common row and $p_i$ is left 
from $p_{i+1}$, as the other cases are symmetric. Moreover, we assume that $r.y < q.y$ in the case 
of a single atomic pair in $Q_i$, and that $r_1.y < q_1.y < r_2.y < q_2.y$ in the case of two 
atomic pairs in~$Q_i$. See Figure~\ref{fig:gadgets}.

\subparagraph{The copy gadget.}
The copy gadget consists of one atomic pair $A$ in $Q_i$ and one atomic pair $B$ in $Q_{i+1}$ 
defined as
\[s = (r.y, r.y + \eps) \qquad t = (q.y, q.y - \eps)\]
where $\eps$ is small positive value such that $s,t$ form an occurrence of 12. We say that the copy 
gadget connects $A$ to $B$. 
Since $A$ sandwiches $B$, we can use the copy gadget to extend the construction to additional tiles 
along the path while preserving the embedding properties.

\begin{observation}
  \label{obs-copy-gadget}
  Suppose there is a copy gadget in $\cT$ that connects an atomic pair $A^T$ in the tile $T_{i}$ to 
an atomic pair $B^T$ in the tile $T_{i+1}$, and a copy gadget in $\cP$ that connects an atomic pair 
$A^P$ in the tile $P_i$ to an atomic pair $B^P$ in the tile $P_{i+1}$. In any grid-preserving 
embedding of $\pi$ into $\tau$, if $A^P$ is mapped to $A^T$, then $B^P$ is mapped to $B^T$.
\end{observation}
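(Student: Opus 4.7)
The plan is to pin down the image of $B^P$ by combining the vertical ordering imposed by the copy gadget with the direct-sum layout of the tile $T_{i+1}$.

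First I would record the relative vertical ordering of the four points that the copy gadget contributes in each of $\cT$ and $\cP$. In the shared-row case (the shared-column case being symmetric), the gadget definition gives $r.y < s.y = r.y + \eps < t.y = q.y - \eps < q.y$ for the four points of $A$ and $B$, and this ordering is preserved by the $\cF$-assembly because the consistent orientation acts uniformly on every point lying in a given row: only the row-wide map $\Psi_j$ affects y-coordinates, so y-comparisons across cells in the same row survive intact. Consequently the same strict inequalities hold for both $\{r^P, s^P, t^P, q^P\}$ and $\{r^T, s^T, t^T, q^T\}$ after assembly, and the tiny clockwise rotation used to make the assembly generic does not upset these strict comparisons.

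Second, the grid-preserving hypothesis together with $A^P \mapsto A^T$ forces $r^P \mapsto r^T$ and $q^P \mapsto q^T$, since both pairs are ordered the same way by y-coordinate. Now the two points of $B^P$ must land on two points of $T_{i+1}$ and, because $r^P.y < s^P.y, t^P.y < q^P.y$, their images must lie in the open vertical strip $(r^T.y, q^T.y)$. The heart of the argument is the claim that $\{s^T, t^T\}$ is the only pair of points of $T_{i+1}$ contained in this strip. This uses the direct-sum convention in the construction of each tile: any atomic pair in $T_{i+1}$ belonging to a different gadget lies entirely above-and-to-the-right or entirely below-and-to-the-left of $B^T$, so provided $\eps$ is chosen small enough, every foreign point in $T_{i+1}$ has y-coordinate either at most $r^T.y$ or at least $q^T.y$, and none of them can intrude into the sandwich strip.

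Finally, since $B^P$ and $B^T$ each form an occurrence of $12$, matching x-coordinates within the pair yields $s^P \mapsto s^T$ and $t^P \mapsto t^T$, so $B^P$ is mapped to $B^T$. The main obstacle is precisely the uniqueness of $B^T$ inside the sandwich strip; this is where the direct-sum convention for building tiles and the freedom to shrink $\eps$ do the real work, while every other step is bookkeeping of how the relative vertical order survives the $\cF$-assembly.
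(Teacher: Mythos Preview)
Your argument is correct and is precisely the reasoning the paper has in mind: the observation is stated without proof, and the intended justification is exactly that $A$ sandwiches $B$ in each copy gadget while the direct-sum layout of the tiles makes this sandwiching strict, so the image of $B^P$ is forced to be~$B^T$. Your write-up simply makes explicit the two ingredients (preservation of the vertical order under assembly, and uniqueness via the direct-sum convention) that the paper treats as self-evident.
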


\subparagraph{The multiply gadget.}
The multiply gadget is only slightly more involved than the previous one, and it consists of one 
atomic pair $A$ in $Q_i$ and two atomic pairs $B_1, B_2$ in the tile $Q_{i+1}$ defined as
\begin{align*}
  \begin{aligned}[c]
    s_1 &= (r.y, r.y + \eps)\\
    t_1 &= \left(\frac{2 \cdot r.y + q.y}{3}, \frac{2 \cdot r.y + q.y}{3}\right)
  \end{aligned}
  \qquad
  \begin{aligned}[c]
    s_2 &=\left(\frac{r.y + 2 \cdot q.y}{3}, \frac{r.y + 2 \cdot q.y}{3}\right)\\
    t_2 &= (q.y, q.y - \eps)
  \end{aligned}
\end{align*}
where $\eps$ is small positive value such that $s_1, t_1, s_2, t_2$ form an occurrence of $1234$. We 
say that the multiply gadget multiplies $A$ to $B_1$ and $B_2$.  A 
property analogous to Observation~\ref{obs-copy-gadget} holds when both text and pattern contain a 
multiply gadget.

% \begin{figure}
%   \centering
%   \raisebox{-0.5\height}{\stackanchor{\includegraphics[width=0.3\textwidth]{copy-gadget}}{\small The 
% copy gadget}}
%   \hspace{0.15in}
%   \raisebox{-0.5\height}{\stackanchor{\includegraphics[width=0.3\textwidth]{choose-gadget}}{\small 
% The choose gadget}}
%   \hspace{0.15in}
%   \raisebox{-0.5\height}{\stackanchor{\includegraphics[width=0.3\textwidth]{pick-gadget}}{\small The 
% pick gadget}}\\[0.1in]
%   
%   \raisebox{-0.5\height}{\stackanchor{\includegraphics[width=0.3\textwidth]{multiply-gadget}}{\small 
% The multiply gadget}}
%   \hspace{0.15in}
%   \raisebox{-0.5\height}{\stackanchor{\includegraphics[width=0.3\textwidth]{merge-gadget}}{\small 
% The merge gadget}}
%   \hspace{0.15in}
%   \raisebox{-0.5\height}{\stackanchor{\includegraphics[width=0.3\textwidth]{follow-gadget}}{\small 
% The follow gadget}}
%   \caption{The constructions of simple gadgets. The tile $Q_i$ is always on the left and the tile 
% $Q_{i+1}$ is on the right. The dotted lines show the relative vertical order of points.}
%   \label{fig:gadgets}
% \end{figure}

\subparagraph{The choose gadget.}
The choose gadget  consists of one atomic pair $A$ in $Q_i$ and two atomic pairs $B_1, B_2$ in the 
tile $Q_{i+1}$ defined as
\begin{align*}
  \begin{aligned}[c]
    s_1 &= \left(r.y, \frac{2 \cdot r.y + q.y}{3}\right)\\
    t_1 &= \left(\frac{2 \cdot r.y + q.y}{3}, r.y + \eps \right)\\
  \end{aligned}
  \qquad
  \begin{aligned}[c]
    s_2 &=\left(\frac{r.y + 2 \cdot q.y}{3}, q.y - \eps \right)\\
    t_2 &= \left(q.y, \frac{r.y + 2 \cdot q.y}{3} \right).
  \end{aligned}
\end{align*}
where $\eps$ is small positive value such that $s_1, t_1, s_2, t_2$ form an occurrence of $2143$. We 
say that the choose gadget branches $A$ to $B_1$ and $B_2$. 

\subparagraph{The pick gadget.}
The pick gadget is essentially identical to the copy gadget except that the pair $B$ forms an 
occurrence of $21$ instead of $12$. Formally, the pick gadget consists of one atomic pair $A$ in 
$Q_i$ and one atomic pair $B$ in $Q_{i+1}$ defined as
\begin{align*}
  s = \left(r.y, q.y - \eps\right) \qquad t = \left(q.y, r.y + \eps \right)
\end{align*}
where $\eps$ is chosen such that $s,t$ form an occurrence of 21. We say that the pick gadget 
connects $A$ to $B$. 

The name of the choose and pick gadgets becomes clear with the following observation that follows 
from the fact that 2143 admits only two possible embeddings of the pattern 21.
\begin{observation}
  \label{obs-choose-gadget}
  Suppose there is a choose gadget in $\cT$ that branches an atomic pair $A^T$ in the tile $T_{i}$ 
to two atomic pairs $B^T_1$ and $B^T_2$ in the tile $T_{i+1}$, and a pick gadget in $\cP$ that 
connects an atomic pair $A^P$ in the tile $P_i$ to an atomic pair $B^P$ in the tile $P_{i+1}$. In 
any grid-preserving embedding of $\pi$ into $\tau$, if $A^P$ is mapped to $A^T$ then $B^P$ is mapped 
either to $B^T_1$ or to $B^T_2$.
\end{observation}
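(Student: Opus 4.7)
The plan is to combine the sandwich structure of the choose and pick gadgets with the fact that the four text points $s_1,t_1,s_2,t_2$ of the choose gadget form an occurrence of $2143$, which contains exactly two occurrences of the pattern $21$.

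First, I would verify that in the text, the pair $A^T$ strictly sandwiches $B^T_1 \cup B^T_2$ inside $T_{i+1}$. Writing $A^T=(r,q)$ with $r.y<q.y$, the four $y$-coordinates of $s_1,t_1,s_2,t_2$, namely $(2r.y+q.y)/3$, $r.y+\eps$, $q.y-\eps$ and $(r.y+2q.y)/3$, all lie in the open interval $(r.y,q.y)$. Because each tile is built as a direct sum of the contributions of individual gadgets, no point of $T_{i+1}$ from outside this particular gadget has its $y$-coordinate in that interval. The analogous observation for the pick gadget shows that $A^P$ sandwiches $B^P$ in $P_{i+1}$.

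Next, I would invoke grid-preservation. Since the embedding maps $P_{i+1}$ to $T_{i+1}$ in an order-preserving way and sends $A^P$ to $A^T$, the image of any point of $P_{i+1}$ whose $y$-coordinate is strictly between those of $A^P$ must land on a point of $T_{i+1}$ whose $y$-coordinate is strictly between those of $A^T$. Strict sandwiching then forces the two points of $B^P$ to map into $\{s_1,t_1,s_2,t_2\}$. Now $B^P$ is an occurrence of $21$ by the construction of the pick gadget, while $s_1,t_1,s_2,t_2$ form an occurrence of $2143$; a direct inspection of $2143$ reveals exactly two copies of the pattern $21$, namely $(s_1,t_1)=B^T_1$ and $(s_2,t_2)=B^T_2$. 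Hence the image of $B^P$ must coincide with one of these two pairs.

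The main technicality to keep in mind is that the tiny clockwise rotation used in the $\cF$-assembly to put the point set in general position must be taken small enough to preserve both the strict sandwiching and the order-isomorphism type of the four gadget points; this is automatic from the construction. The remaining cases, when $p_i$ and $p_{i+1}$ share a column rather than a row or when the consistent orientation $\cF$ reverses one of the relevant rows or columns, reduce to the case above by the symmetries built into the $\cF$-assembly formalism, so it suffices to treat the stated configuration.
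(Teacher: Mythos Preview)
Your proof is correct and follows the same idea as the paper: the paper's entire justification is the single sentence ``follows from the fact that $2143$ admits only two possible embeddings of the pattern $21$,'' and your argument arrives at exactly this point after explicitly spelling out the strict-sandwiching step that the paper leaves implicit in the direct-sum tile convention. No substantive difference in approach.
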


\subparagraph{The merge gadget.}
The merge gadget consists of two atomic pairs $A_1$, $A_2$ in $Q_i$ and one atomic pair $B$ in 
$Q_{i+1}$ defined as
\begin{align*}
  s = \left(r_1.y, q_2.y + \eps\right) \qquad t = \left(q_2.y, r_1.y - \eps \right)
\end{align*}
where $\eps$ is chosen small such that the relative order of the points of merge gadget and the 
points outside is not changed. Notice that now $B$ sandwiches the set $A_1 \cup A_2$.  We say that 
the merge gadget merges $A_1$ and $A_2$ into $B$.

\subparagraph{The follow gadget.}
This gadget is almost identical to the pick gadget except that here $B$ sandwiches $A$. Formally, 
the follow gadget contains one atomic pair $A$ in $Q_i$ and one atomic pair $B$ in $Q_{i+1}$ defined 
as
\begin{align*}
  s = \left(r.y, q.y + \eps\right) \qquad t = \left(q.y, r.y - \eps \right)
\end{align*}
where $\eps$ is chosen small such that the relative order of follow gadget with points outside is 
not changed. We say that the follow gadget connects $A$ to $B$. 

We can observe that merge and follow gadgets act in a way as an inverse to choose and pick gadgets. 
\begin{observation}
  \label{obs-merge-gadget}
  Suppose there is a merge gadget in $\cT$ that merges atomic pairs $A^T_1$ and $A^T_2$ in the tile 
$T_i$ into an atomic pair $B^T$ in the tile $T_{i+1}$, and a follow gadget in $\cP$ that connects an 
atomic pair $A^P$ in the tile $P_i$ to an atomic pair $B^P$ in the tile $P_{i+1}$. In any 
grid-preserving embedding of $\pi$ into $\tau$, if $A^P$ is mapped to $A^T_\alpha$ for some $\alpha 
\in \{1,2\}$ then $B^P$ is mapped to $B^T$. 
\end{observation}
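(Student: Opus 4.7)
The plan is to analyze the possible images of $B^P$ in $T_{i+1}$ under any grid-preserving embedding, combining the sandwiching condition that the follow gadget imposes on the pattern side with the direct-sum decomposition of tiles on the text side. I will treat the case when $p_i$ and $p_{i+1}$ share a common row with $p_i$ to the left of $p_{i+1}$; the remaining cases follow by symmetry.

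Since the embedding is grid-preserving, the two points $s_P, t_P$ of $B^P$ map to two points $u, v$ of $T_{i+1}$, where $u$ is the image of $s_P$ and $v$ the image of $t_P$. The follow gadget forces $B^P$ to be an occurrence of $21$ that $y$-sandwiches $A^P$, so any order-preserving embedding must deliver $u,v$ satisfying $u.x < v.x$, $u.y > v.y$, and bracketing the $y$-coordinates of the image $A^T_\alpha$ of $A^P$. Writing $A^T_\alpha = (r_\alpha, q_\alpha)$, I therefore need $v.y < r_\alpha.y$ and $u.y > q_\alpha.y$.

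Next I would exploit the fact that each tile is built as the direct sum of its gadget components. In $T_{i+1}$ the merge gadget contributes exactly the pair $B^T = (s,t)$, whose bounding box has $x$-range $[r_1.y, q_2.y]$ and $y$-range $[r_1.y - \eps, q_2.y + \eps]$, and every other gadget contributing to $T_{i+1}$ has its bounding box either strictly below-and-left or strictly above-and-right of this one. Consequently, any point of $T_{i+1} \setminus B^T$ whose $y$-coordinate exceeds $q_\alpha.y$ sits in a gadget above the merge gadget and hence has $x$-coordinate strictly greater than $q_2.y = t.x$; symmetrically, any point of $T_{i+1} \setminus B^T$ with $y$-coordinate below $r_\alpha.y$ has $x$-coordinate strictly less than $r_1.y = s.x$.

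The conclusion then follows from a short case check. If $v \neq t$, then $v.x < s.x \le u.x$, violating $u.x < v.x$; if $u \neq s$, then $u.x > t.x \ge v.x$, again a contradiction. Hence $u = s$ and $v = t$, so $B^P$ is mapped to $B^T$. I expect the only step requiring care to be making the direct-sum layout explicit enough that the $x$-coordinate bounds on points outside the merge gadget's bounding box can be invoked cleanly; once that is in place, the argument reduces to the elementary enumeration above.
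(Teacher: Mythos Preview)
Your argument is correct and follows essentially the same route as the paper: the image of $B^P$ must be an occurrence of $21$ in $T_{i+1}$ that sandwiches $A^T_\alpha$, and the direct-sum structure of $T_{i+1}$ forces $B^T$ to be the only such pair. Your version spells out the coordinate bookkeeping more explicitly than the paper's two-sentence justification, but the underlying idea is identical.
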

To see this, notice that $B^P$ forms an occurrence of 21 and the only occurrence of 21 in $T_{i+1}$ 
that sandwiches $A^T_1$ or $A^T_2$ is $B^T$. Here it is important that $T_{i+1}$ is formed as a 
direct sum of the individual gadgets and in particular, every other occurrence of 21 in $T_{i+1}$ 
lies either above or below $B^T$.

\subsubsection{The flip gadget}

We proceed to define two gadgets --- a flip text gadget and a flip pattern gadget.
The construction of this final pair of gadgets is a bit more involved. It is insufficient to 
consider just two neighboring tiles as we need two $\cD$-entries for the construction. To that end, 
let $i$ and $j$ be indices such that both $p_{i+1}$ and $p_j$ are $\cD$-entries and every entry 
$p_k$ for $k$ between $i+1$ and $j$ is a monotone entry. Recall that every $\cD$-entry shares a row 
with its predecessor. In particular, $p_i$ occupies the same row as $p_{i+1}$ and $p_{j-1}$ occupies 
the same row as $p_{j}$.  

As before, suppose that $A_1 = (r_1, q_1)$ and $A_2 = (r_2, q_2)$ are two atomic pairs in $Q_i$ such 
that $r_1.y < q_1.y < r_2.y < q_2.y$. The \emph{flip text gadget} attached to the atomic pairs $A_1$ 
and $A_2$ consists of two points $s^{i+1}_1, s^{i+1}_2$ in the tile $Q_{i+1}$ and two atomic pairs 
$B^k_1 = (s^{k}_1, t^{k}_1), B^k_2=(s^k_2, t^k_2)$ in each tile $Q_k$ for every $k \in [i+2,j] = 
\{i+2, i+3, \dots, j\}$. The points $s^{i+1}_1, s^{i+1}_2$ are defined as
\begin{align*}
  s^{i+1}_1 = \left(\frac{r_1.y + q_1.y}{2}, \frac{r_2.y + q_2.y}{2} \right) \quad s^{i+1}_2 = 
\left(\frac{r_2.y + q_2.y}{2}, \frac{r_1.y + q_1.y}{2} \right).
\end{align*}
Observe that $s^{i+1}_1, s^{i+1}_2$ form an occurrence of 21 such that $s^{i+1}_\alpha$ is 
sandwiched by the pair $A_\alpha$ for each $\alpha \in \{1,2\}$. The points $s^{k}_1, t^{k}_1, 
s^k_2, t^k_2$ for $k \in [i+2, j-1]$ are defined as
\begin{align*}
  \begin{aligned}[c]
    s^k_1 &= \left(r_2.y + \eps, r_2.y\right)\\
    t^k_1 &= \left(q_2.y - \eps, q_2.y \right)
  \end{aligned}
  \qquad
  \begin{aligned}[c]
    s^k_2 &= \left(r_1.y + \eps, r_1.y\right)\\
    t^k_2 &= \left(q_1.y - \eps, q_1.y\right).
  \end{aligned}
\end{align*}
if $p_k$ and $p_{k+1}$ share the same column, otherwise we just apply the adjustments by $\eps$ in 
the $y$-coordinates. The positive constant $\eps$ is chosen such that the points $s^k_2, t^k_2, 
s^{k}_1, t^{k}_1$ (in this precise left-to-right order) form an occurrence of 1234 for every $k$ 
between $i+1$ and $j$.

Finally, the points $s^{j}_1, t^{j}_1, s^j_2, t^j_2$ are defined as
\begin{align*}
  \begin{aligned}[c]
    s^j_1 &= \left(r_2.y, q_2.y\right)\\
    t^j_1 &= \left(q_2.y, r_2.y \right)
  \end{aligned}
  \qquad
  \begin{aligned}[c]
    s^j_2 &=\left(r_1.y, q_1.y\right)\\
    t^j_2 &= \left(q_1.y, r_1.y \right).
  \end{aligned}
\end{align*}
Observe that they form an occurrence of 2143. We say that the flip text gadget flips the pairs $A_1, 
A_2$ in $Q_i$ to the pairs $B^j_2, B^j_1$ in $Q_j$. See the left part of 
Figure~\ref{fig:flip-gadgets}.

We defined the points such that for every $k$ between $i+1$ and $j$ and $\alpha \in \{1,2\}$, the 
pair $B^{k+1}_\alpha$ sandwiches the pair $B^k_{\alpha}$. Alternatively, this can be seen as 
$B^{k+1}_\alpha$ and $B^k_\alpha$ forming a copy gadget alas in the opposite direction. Moreover, 
the pair $B^{i+2}_\alpha$ sandwiches the point $s^{i+1}_\alpha$.

Observe that independently of the actual orientation $\cF$, the images of all points $s^k_1, t^k_1$ 
for all $k$ and the images of all points $s^k_2, t^k_2$ for all $k$ under the $\cF$-assembly will be 
isomorphic. We define the flip pattern gadget as a set of points isomorphic to this particular set 
of points.

Given an atomic pair $A = (r,q)$ in $Q_i$, the \emph{flip pattern gadget} attached to the atomic 
pair $A$ consists of a single point $s^{i+1}$ in the tile $Q_{i+1}$ and an atomic pair $B^k = (s^k, 
t^k)$ in the tile $Q_k$ for every $k \in [i+2, j]$, where

\begin{align*}
  s^{i+1} = \left(\frac{r.y + q.y}{2}, \frac{r.y + q.y}{2} \right) ,
\end{align*}
the atomic pair $B^k$ is defined for every $k \in [i+2, j-1]$  as
\begin{align*}
  s^k =\left(r.y + \eps, q.y\right) \qquad t^k = \left(q.y - \eps, r.y \right) 
\end{align*}
if $p_k$ and $p_{k+1}$ share a common column, otherwise we just apply the adjustments by $\eps$ in 
the $y$-coordinate, and finally, the atomic pair $B^j$ is defined as
\begin{align*}
  s^j =\left(r.y, q.y\right) \qquad t^j = \left(q.y, r.y \right).
\end{align*}
We say that the flip pattern gadget connects the pair $A$ to the pair $B^j$. See the right part of 
Figure~\ref{fig:flip-gadgets}.

% \begin{figure}
%   \centering
%   \raisebox{-0.5\height}{\includegraphics[width=0.48\textwidth]{flip-text-gadget}}
%   \hspace{1em}
%   \raisebox{-0.5\height}{\includegraphics[width=0.48\textwidth]{flip-pattern-gadget}}
%   \caption{A flip text gadget on the left and a flip pattern gadget on the right. The first tile 
% pictured is $Q_i$ and the last tile is $Q_j$ where $j = i+3$. As before, the dotted lines show the 
% relative order of points. }
%   \label{fig:flip-gadgets}
% \end{figure}

\begin{lemma}
  \label{lem:flip-gadget}
  Suppose there is a flip pattern gadget in $\cP$ that connects an atomic pair $\overline{A}$ in 
$P_i$ with an atomic pair $\overline{B}$ in $P_{j}$. Furthermore, suppose that there is a flip text 
gadget in $\cT$ that flips atomic pairs $A_1$ and $A_2$ in $T_i$ to atomic pairs $B_2$ and $B_1$ in 
$T_{j}$. In any grid-preserving embedding of $\pi$ into $\tau$, if $\overline{A}$ is mapped to 
$A_\alpha$ for some $\alpha \in \{1,2\}$ then $\overline{B}$ is mapped to $B_\alpha$.
\end{lemma}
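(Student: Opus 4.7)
The plan is to propagate the constraint $\overline{A} \mapsto A_\alpha$ along the chain of tiles $P_{i+1}, P_{i+2}, \ldots, P_j$ and show, step by step, that each successive object of the flip pattern gadget must embed into the $\alpha$-indexed object of the flip text gadget, ultimately giving $\overline{B} \mapsto B_\alpha$.

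I would exploit two parallel families of sandwiching relations built into the two gadgets. In the pattern, $\overline{A}$ sandwiches the single point $s^{i+1} \in P_{i+1}$, the atomic pair $B^{i+2}$ sandwiches $s^{i+1}$, and each $B^{k+1}$ sandwiches $B^k$ for $k \in [i+2, j-1]$, with $B^j = \overline{B}$. In the text, separately for each $\alpha \in \{1,2\}$, $A_\alpha$ sandwiches $s^{i+1}_\alpha$, the pair $B^{i+2}_\alpha$ sandwiches $s^{i+1}_\alpha$, and each $B^{k+1}_\alpha$ sandwiches $B^k_\alpha$, with $B^j_\alpha$ being the pair identified with $B_\alpha$ in the lemma statement. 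Any grid-preserving embedding preserves sandwiching: if a point $p$ is sandwiched by a pair $A$ in the pattern, its image must be sandwiched by the image of $A$ in the text. The proof just chases these implications.

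For the base step, from $\overline{A} \mapsto A_\alpha$ I would conclude $s^{i+1} \mapsto s^{i+1}_\alpha$: since each tile is a direct sum of its gadget pieces, the only points of $T_{i+1}$ that lie in the coordinate band determined by $A_\alpha$ are the two points $s^{i+1}_1, s^{i+1}_2$ of the flip text gadget, and of these only $s^{i+1}_\alpha$ is actually sandwiched by $A_\alpha$. For the inductive step, I would show $B^k \mapsto B^k_\alpha$ for $k = i+2, \ldots, j$. When $k = i+2$, the pair $B^{i+2}$ must map to an atomic pair in $T_{i+2}$ sandwiching $s^{i+1}_\alpha$; the direct-sum structure and the gadget's construction leave $B^{i+2}_\alpha$ as the only candidate. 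For $k > i+2$, the induction hypothesis $B^{k-1} \mapsto B^{k-1}_\alpha$ together with the sandwiching of $B^{k-1}$ by $B^k$ similarly forces $B^k \mapsto B^k_\alpha$. Taking $k = j$ gives the desired conclusion.

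The main obstacle I anticipate is the uniqueness claim at each step: one needs to rule out that an atomic pair from some unrelated gadget sharing the tile $T_k$ could accidentally realize the sandwiching relation. This is a short but case-based bookkeeping argument, resting on the direct-sum structure of the tiles and on the $\varepsilon$-adjustments baked into the gadget definitions. A secondary technical point is handling the row/column alternation along the path, where the notion of sandwiching switches between $y$- and $x$-coordinates; the definitions of $s^k_\alpha, t^k_\alpha$ were built with symmetric $\varepsilon$-shifts in both coordinates precisely so that the induction goes through uniformly in both cases.
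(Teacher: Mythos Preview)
Your overall strategy---propagate the constraint along the chain of tiles from $P_{i+1}$ to $P_j$ using the sandwiching relations---is exactly the paper's approach, and your base step ($\overline{s}^{i+1}\mapsto s^{i+1}_\alpha$) is correct. The gap is in the inductive invariant: the claim that $\overline{B}^k$ maps \emph{exactly} to $B^k_\alpha$ for every intermediate $k$ is too strong, and your inductive step does not establish it.

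The problem is the tacit assumption that the image of $\overline{B}^k$ must be one of the predefined atomic pairs in $T_k$. In the intermediate tiles $T_k$ for $k\in[i+2,j-1]$, the flip text gadget contributes four points $s^k_2,t^k_2,s^k_1,t^k_1$ forming a $1234$, so an increasing pair $\overline{B}^k$ can in principle land on any of the six increasing sub-pairs of this $1234$, not just on $B^k_1=(s^k_1,t^k_1)$ or $B^k_2=(s^k_2,t^k_2)$. For example, a ``mixed'' pair such as $(s^k_2,t^k_1)$ spans the full range of the gadget and (up to the $\eps$-bookkeeping) sandwiches $B^{k-1}_\alpha$ for either value of~$\alpha$; the reverse-sandwiching constraint coming from $\overline{B}^{k-1}\mapsto B^{k-1}_\alpha$ therefore does not single out $B^k_\alpha$. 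The obstacle you anticipate---interference from unrelated gadgets, handled by the direct-sum structure---is real, but it is not the only one: here the competing candidates come from inside the same flip text gadget.

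The paper avoids this by carrying a weaker, one-sided invariant: $\overline{s}^k$ maps to $s^k_\alpha$ or something below it, and $\overline{t}^k$ maps to $t^k_\alpha$ or something above it. This does propagate (because $\overline{B}^{k+1}$ sandwiches $\overline{B}^k$ in the reverse direction, so the bounds only loosen), and it is only at the final step $k=j$ that the mapping is pinned down exactly: there $\overline{B}^j$ is a $21$, and the only occurrence of $21$ in $T_j$ compatible with the accumulated bounds---using now the direct-sum structure of $T_j$---is $B^j_\alpha$. Switching your invariant to this one-sided form makes the induction go through without any case analysis on mixed pairs.
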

\begin{proof}
  We denote the points of both gadgets as in their respective definitions. Additionally, we use 
overlined letters to denote points of the flip pattern gadget in $\cP$ to distinguish them from the 
points of the flip text gadget in $\cT$.
  
  Observe that $\overline{s}^{i+1}$ must be mapped to $s^{i+1}_\alpha$. This implies that the point 
$\overline{s}^{i+2}$ must be mapped to the point $s^{i+2}_\alpha$ or below and the point 
$\overline{t}^{i+2}$ must be mapped to the point $t^{i+2}_\alpha$ or above. By repeating this 
argument, we see that $\overline{s}^{k}$ must be mapped to the point $s^{k}_\alpha$ or below and the 
point $\overline{t}^{k}$ must be mapped to the point $t^{k}_\alpha$ or above for every $k \in [i+2, 
j]$. But the only occurrence of 21 in $T_j$ with this property is precisely the pair $B_\alpha$ 
which concludes the proof. We remark that here we again use the property that $T_j$ can be expressed 
as a direct sum of the individual gadgets. Otherwise, we could find a suitable occurrence of 21 in 
$T_j$ as part of a different gadget.
\end{proof}

Flip gadgets will see two slightly different applications. First, as the name suggests, a flip 
gadget allows us to shuffle the order of atomic pairs.
Second and perhaps more cunning use of the flip gadget is that it allows us to test if only one of 
its initial atomic pairs is used in the embedding.

\begin{lemma}
  \label{lem-flip-test}
  Suppose that there are two flip pattern gadgets in $\cP$ each connecting an atomic pair 
$A^P_\alpha$ in $P_i$ to an atomic pair $B^P_\alpha$ in $P_j$  for $\alpha \in \{1,2\}$. Suppose 
that there is a flip text gadget in $\cT$ that flips atomic pairs $A^T_1$ and $A^T_2$ in $T_i$ to 
atomic pairs $B^T_2$ and $B^T_1$ in $T_j$. There cannot exist a grid-preserving embedding $\phi$ of 
$\pi$ into $\tau$ that maps $A^P_\alpha$ to $A^T_\alpha$ for each $\alpha \in \{1,2\}$.
\end{lemma}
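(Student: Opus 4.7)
The plan is to apply Lemma~\ref{lem:flip-gadget} to each of the two flip pattern gadgets separately and then derive a contradiction from the vertical ordering of the resulting images in $T_j$.

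Concretely, suppose for contradiction that such an embedding $\phi$ exists. Applying Lemma~\ref{lem:flip-gadget} to the flip pattern gadget connecting $A^P_1$ to $B^P_1$ together with the flip text gadget, the hypothesis $\phi(A^P_1) = A^T_1$ forces $\phi(B^P_1) = B^T_1$. Applying the same lemma to the second flip pattern gadget, the hypothesis $\phi(A^P_2) = A^T_2$ forces $\phi(B^P_2) = B^T_2$. Thus $\phi$ is compelled to map $B^P_\alpha$ to $B^T_\alpha$ for both $\alpha \in \{1,2\}$.

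Now I would inspect the coordinates in tile $j$ prescribed by the two constructions. In the flip text gadget, the pair $B^T_1 = (s^j_1, t^j_1)$ uses the $y$-coordinates of the upper pair $A^T_2$, while $B^T_2 = (s^j_2, t^j_2)$ uses those of the lower pair $A^T_1$; hence $B^T_1$ lies strictly above $B^T_2$ in $T_j$. On the other hand, each flip pattern gadget places $B^P_\alpha$ in the same vertical range as its source pair $A^P_\alpha$, so $B^P_1$ lies strictly below $B^P_2$ in $P_j$. Consequently, $\phi$ would send the lower pair $B^P_1$ to the upper pair $B^T_1$ and the upper pair $B^P_2$ to the lower pair $B^T_2$, reversing vertical order and contradicting the fact that $\phi$ is an embedding.

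The only subtle point I would double-check is that the potential reversal/complementation performed by the $\cF$-assembly in the column and row containing tile $j$ does not invalidate the comparison, but this is automatic: the same symmetry $\Phi_i \circ \Psi_j$ is applied to $P_j$ and $T_j$ alike, so the relative vertical order of $B^P_1, B^P_2$ after assembly matches that of $B^T_1, B^T_2$ up to a common reversal, and the contradiction persists. I also need to note, as in the proof of Lemma~\ref{lem:flip-gadget}, that the tiles are direct sums of their individual gadgets, so no spurious occurrence of $21$ in $T_j$ from another gadget can serve as the image of $B^P_\alpha$. The main obstacle is therefore merely bookkeeping of which coordinates land where; once that is done, the order-reversal contradiction is immediate.
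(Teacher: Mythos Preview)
Your proposal is correct and follows essentially the same approach as the paper: apply Lemma~\ref{lem:flip-gadget} twice to force $\phi(B^P_\alpha)=B^T_\alpha$, then observe that $B^T_1$ sits above $B^T_2$ in $T_j$ while $B^P_1$ sits below $B^P_2$ in $P_j$, yielding the order-reversal contradiction. Your extra remarks about the $\cF$-assembly symmetry and the direct-sum structure are harmless but unnecessary here, since Lemma~\ref{lem:flip-gadget} already pins down the image of each $B^P_\alpha$ exactly.
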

\begin{proof}
  Using Lemma~\ref{lem:flip-gadget}, we see that $\phi$ would map also $B^P_\alpha$ to $B^T_\alpha$ 
for each $\alpha \in \{1,2\}$. But that is a contradiction since $B^T_1$ lies to right and above 
$B^T_2$ in $T_j$ while $B^P_1$ lies to the left and below $B^P_2$ in $P_j$.
\end{proof}

We conclude the introduction of gadgets with one more definition. All the gadgets except for the 
copy and multiply ones need the entry $p_{i+1}$ to be non-monotone, more precisely the image of 
$\cM_{p_{i+1}}$ under $\cF$ has to contain the Fibonacci class $\oplus 21$. Suppose there is an atomic pair $A$ 
in the tile $Q_i$ and that $j$ is the smallest index larger than $i$ such that $p_j$ is a 
$\cD$-entry. By \emph{attaching a gadget} other than copy or multiply to the pair $A$, we mean the 
following procedure. We add an atomic pair $A^k$ to each tile $Q_k$ for $k \in [i+1,j-1]$ such that 
$A^{k}$ and $A^{k+1}$ form a copy gadget for each $k \in [i,j-2]$ when we additionally define $A^i = 
A$. Finally, we attach the desired gadget to the atomic pair $A^{j-1}$. Similarly, when attaching a 
gadget that contains two atomic pairs $A_1$ and $A_2$ in its first tile, we just copy both of these 
pairs all the way to the tile $Q_{j-1}$ and then attach the desired gadget. It follows from 
Observation~\ref{obs-copy-gadget} that the embedding properties are preserved via this construction.

\subsubsection{Constructing the \texorpdfstring{\CPPM{$\cC$}}{C-PPM} instance}

We define the initial tile $P_1$ to contain atomic pairs $X^0_k = (q_k, r_k)$ for $k\in [n]$ and the 
initial tile $T_1$ to contain atomic pairs $Y^0_k = (s_k, t_k)$ for $k\in [n]$ where
\[q_k = s_k = (2k-1, 2k -1) \qquad r_k = t_k = (2k, 2k).\]
Any grid-preserving embedding of $\pi$ into $\tau$ must obviously map $X^0_k$ to $Y^0_k$ for every 
$k \in [n]$.

We describe the rest of the construction in four distinct phases. We start by simulating the 
assignment of truth values with two possible mappings for each of the $n$ atomic pairs in the 
assignment phase. In the multiplication phase, we manufacture an atomic pair for each occurrence of 
a variable in a clause while keeping the possible mappings of all pairs corresponding to the same 
variable consistent. In the following sorting phase, we rearrange the atomic pairs to be bundled 
together by their clauses. And finally, in the evaluation phase, we test in parallel that each 
clause is satisfied.

\subparagraph{Assignment phase.}In the first phase, we simulate the assignment of truth values to 
the variables. To that end, we attach to each pair $Y^0_k$ for $k \in [n]$ a choose gadget that 
branches $Y^0_k$ to two atomic pairs $Y^1_{k,1}$ and $Z^1_{k,1}$. On the pattern side, we attach to 
each pair $X^0_k$ for $k \in [n]$ a pick gadget that connects $X^0_k$ to an atomic pair $X^1_{k,1}$. 
The properties of choose and pick gadgets imply that in any grid-preserving embedding, $X^1_{k,1}$ 
is either mapped to $Y^1_{k,1}$ or to $Z^1_{k,1}$.

\subparagraph{Multiplication phase.}Our next goal is to multiply the atomic pairs corresponding to a 
single variable into as many pairs as there are occurrences of this variable in the clauses. We 
describe the gadgets dealing with each variable individually.

Fix $k \in [n]$ and let $m_k$ for $k \in [n]$ denote the total number of occurrences of $x_k$ and 
$\neg x_k$ in $\Phi$. We are going to describe the construction inductively in $\ell_k = \lceil \log 
m_k \rceil$ steps.
In $i$-th step, we define atomic pairs $X^{i+1}_{k,j}, Y^{i+1}_{k,j},Z^{i+1}_{k,j}$ for $j \in 
[2^{i+1}]$ such that in any grid-preserving embedding, $X^{i+1}_{k,j}$ maps either to 
$Y^{i+1}_{k,j}$ or to  $Z^{i+1}_{k,j}$. Moreover, the order of the atomic pairs in the pattern tile 
is $X^{i+1}_{k,1}, X^{i+1}_{k,2}, \dotsc, X^{i+1}_{k,2^{i+1}}$ and the order of the atomic pairs in 
the text tile is
$$Y^{i+1}_{k,1}, Z^{i+1}_{k,1},Y^{i+1}_{k,2}, Z^{i+1}_{k,2},  \dotsc, Y^{i+1}_{k,2^{i+1}}, 
Z^{i+1}_{k,2^{i+1}}.$$

First, notice that the properties hold for $i = 0$ at the end of the assignment phase.
Fix $i \ge1$. We add for each $j \in [2^i]$ three multiply gadgets, one that multiplies the atomic 
pair $X^i_{k,j}$ to atomic pairs $\widetilde{X}^{i+1}_{k,2j-1}$ and $\widetilde{X}^{i+1}_{k,2j}$, 
one that multiplies the pair $Y^i_{k,j}$ to $\widetilde{Y}^{i+1}_{k,2j-1}$ and 
$\widetilde{Y}^{i+1}_{k,2j}$, and finally one that multiplies $Z^i_{k,j}$ to 
$\widetilde{Z}^{i+1}_{k,2j-1}$ and $\widetilde{Z}^{i+1}_{k,2j}$. Observe that the properties of 
gadgets together with induction imply that for arbitrary $j \in [2^{i+1}]$,  
$\widetilde{X}^{i+1}_{k,j}$ maps either to  $\widetilde{Y}^{i+1}_{k,j}$ or to 
$\widetilde{Z}^{i+1}_{k,j}$. Moreover, the atomic pairs $\widetilde{X}^{i+1}_{k,j}$ are already 
ordered in the pattern by $j$ as desired. However, the order of the atomic pairs in text is 
incorrect as for each $j \in [2^{i}]$ we have the quadruple
\[\widetilde{Y}^{i+1}_{k,2j-1}, \widetilde{Y}^{i+1}_{k,2j}, \widetilde{Z}^{i+1}_{k,2j-1}, 
\widetilde{Z}^{i+1}_{k,2j} \]
in this specific order.

To solve this, we add for each $j \in [2^i]$ a flip text gadget that flips 
$\widetilde{Y}^{i+1}_{k,2j}$, $\widetilde{Z}^{i+1}_{k,2j-1}$ to atomic pairs $Z^{i+1}_{k,2j-1}, 
Y^{i+1}_{k,2j}$. Furthermore, we attach a flip pattern gadget to every other atomic pair in both 
pattern and text. In particular, we add one that connects the pair $\widetilde{Y}^{i+1}_{k,2j-1}$ to 
a pair $Y^{i+1}_{k,2j-1}$, one that connects $\widetilde{Z}^{i+1}_{k,2j}$ to a pair $Z^{i+1}_{k,2j}$ 
and finally two that connect $\widetilde{X}^{i+1}_{k,\alpha}$ to $X^{i+1}_{k, \alpha}$ for $\alpha 
\in \{2j-1, 2j\}$. The properties of flip gadgets guarantee that for every $j \in [2^{i+1}]$, the 
pair $X^{i+1}_{k, j}$ is mapped either to  $Y^{i+1}_{k, j}$ or to $Z^{i+1}_{k, j}$. Moreover, the 
order of atomic pairs in the text now alternates between $Y$ and $Z$ as desired.

We described the gadget constructions independently for each variable. The unfortunate effect is 
that we might have used up a different total number of tiles for each variable. We describe a way to 
fix this. Let $k$ be such that $m_k$ is the largest value and let $j$ be the largest index such that 
the tiles $T_j$ and $P_j$ have been used in the multiplication phase for the $k$-th variable. For 
every other variable, we simply attach a chain of copy gadgets connecting every pair at the end of 
its multiplication phase all the way to the tiles $P_j$ and $T_j$. Observe that we need in total 
$O(\log m)$ entries equal to $\cD$ for the multiplication phase as $m_k$ is at most $3m$.

\subparagraph{Sorting phase via gadgets.}
The multiplication phase ended with atomic pairs $X^{i}_{k,j}$ in the pattern and $Y^{i}_{k,j}$ and 
$Z^{i}_{k,j}$ in the text for some $i$, every $k \in [n]$ and $j \in [m_k]$.\footnote{In fact for 
every $j \in [2^{\lceil \log m_k \rceil}]$ but we simply ignore the pairs for $j > m_k$.} These 
pairs are ordered lexicographically by $(k,j)$, i.e., they are bundled in blocks by the variables. 
The goal of the sorting phase, as the name suggests, is to rearrange them such that they become 
bundled by clauses while retaining the embedding properties.

We show how to use gadgets to swap two neighboring atomic pairs in the pattern. Suppose that $X_1$ 
and $X_2$ are two atomic pairs in some tile $P_i$ such that $X_1$ is to the left and below $X_2$ and 
all the remaining atomic pairs are either to the right and top of both $X_1, X_2$ or to the left and 
below both $X_1, X_2$. Suppose that $Y_1, Y_2, Z_1, Z_2$ are atomic pairs in the tile $T_i$ such 
that they are ordered in $T_i$ as  $Y_1, Z_1, Y_2, Z_2$, and every other atomic pair lies either to 
the right and top or to the left and below of all of them. Furthermore, suppose that in any 
grid-preserving embedding $\phi$, the pair $X_\alpha$ is mapped either to $Y_\alpha$ or to 
$Z_\alpha$ for each $\alpha \in \{1,2\}$. Notice that this is precisely the case at the end of the 
multiplication phase.

We attach choose gadgets to each of the pairs $Y_1, Z_1, Y_2, Z_2$ and a pick gadget to both $X_1, 
X_2$. In particular, we add for every $\alpha \in \{1,2\}$
\begin{itemize}%[noitemsep]
  \item a choose gadget that branches $Y_\alpha$ to pairs $\overline{Y}_\alpha$ and 
$\widetilde{Y}_\alpha$,
  \item a choose gadget that branches $Z_\alpha$ to pairs $\overline{Z}_\alpha$ and 
$\widetilde{Z}_\alpha$, and
  \item a pick gadgets that connects $X_\alpha$ to $\overline{X}_\alpha$.
\end{itemize}

We will now abuse our notation slightly and use the same letters to denote atomic pairs in different 
tiles so that the names are carried through with the flip gadgets. In other words, a flip text 
gadget shall flip atomic pairs $A_1, A_2$ to pairs $A_2, A_1$ and a flip pattern gadget connects 
atomic pair $A$ to an atomic pair $A$.
Using three layers of flip gadgets in $\cT$, we change the order of the pairs in the following way. 
Note that the blue color is used to highlight pairs to which $\overline{X}_1$ can be mapped, and red 
color for the pairs to which $\overline{X}_2$ can be mapped and arrows show which pairs are flipped 
in each step.
\begin{align*}
  \colb{\overline{Y}_1} \colb{\widetilde{Y}_1} \colb{\overline{Z}_1} \flip{\colb{\widetilde{Z}_1} 
\colr{\overline{Y}_2}} \flip{\colr{\widetilde{Y}_2} \colr{\overline{Z}_2}} \colr{\widetilde{Z}_2} 
&\rightarrow
  \colb{\overline{Y}_1} \colb{\widetilde{Y}_1} \flip{\colb{\overline{Z}_1}  \colr{\overline{Y}_2}} 
\flip{\colb{\widetilde{Z}_1} \colr{\overline{Z}_2}} \colr{\widetilde{Y}_2} \colr{\widetilde{Z}_2} 
\rightarrow
  \colb{\overline{Y}_1} \flip{\colb{\widetilde{Y}_1}  \colr{\overline{Y}_2}} 
\flip{\colb{\overline{Z}_1}  \colr{\overline{Z}_2}}  \flip{\colb{\widetilde{Z}_1} 
\colr{\widetilde{Y}_2}} \colr{\widetilde{Z}_2}\\
  &\rightarrow \colb{\overline{Y}_1}  \colr{\overline{Y}_2}  \colb{\widetilde{Y}_1}   
\colr{\overline{Z}_2} \colb{\overline{Z}_1}  \colr{\widetilde{Y}_2} \colb{\widetilde{Z}_1} 
\colr{\widetilde{Z}_2}
\end{align*}
On the pattern side, we attach three consecutive copies of flip pattern gadget to both 
$\overline{X}_1$ and $\overline{X}_2$.

Now we perform the actual swap by attaching a flip text gadget that flips $\overline{X}_1$ and 
$\overline{X}_2$ and attaching flip text gadget between each of the four neighboring pairs in the 
text, i.e.
\begin{align*}
  \flip{\colb{\overline{Y}_1}  \colr{\overline{Y}_2}}  \flip{\colb{\widetilde{Y}_1}   
\colr{\overline{Z}_2}} \flip{\colb{\overline{Z}_1}  \colr{\widetilde{Y}_2}} 
\flip{\colb{\widetilde{Z}_1} \colr{\widetilde{Z}_2}} \rightarrow
  \colr{\overline{Y}_2} \colb{\overline{Y}_1} \colr{\overline{Z}_2}  \colb{\widetilde{Y}_1} 
\colr{\widetilde{Y}_2} \colb{\overline{Z}_1}  \colr{\widetilde{Z}_2} \colb{\widetilde{Z}_1}.
\end{align*}

As a next step, we unshuffle the pairs to which $\overline{X}_1$ and $\overline{X}_2$ can be mapped, 
again by three layers of flip gadgets
\begin{align*}
  \colr{\overline{Y}_2} \flip{\colb{\overline{Y}_1} \colr{\overline{Z}_2}}  
\flip{\colb{\widetilde{Y}_1} \colr{\widetilde{Y}_2}} \flip{\colb{\overline{Z}_1}  
\colr{\widetilde{Z}_2}} \colb{\widetilde{Z}_1} &\rightarrow
  \colr{\overline{Y}_2}  \colr{\overline{Z}_2} \flip{\colb{\overline{Y}_1}   \colr{\widetilde{Y}_2}} 
\flip{\colb{\widetilde{Y}_1}   \colr{\widetilde{Z}_2}} \colb{\overline{Z}_1} \colb{\widetilde{Z}_1} 
\rightarrow
  \colr{\overline{Y}_2}  \flip{\colr{\overline{Z}_2}  \colr{\widetilde{Y}_2}} 
\flip{\colb{\overline{Y}_1} \colr{\widetilde{Z}_2}} \colb{\widetilde{Y}_1}  \colb{\overline{Z}_1} 
\colb{\widetilde{Z}_1}\\
  &\rightarrow 
  \colr{\overline{Y}_2} \colr{\widetilde{Y}_2} \colr{\overline{Z}_2}     \colr{\widetilde{Z}_2} 
\colb{\overline{Y}_1} \colb{\widetilde{Y}_1}  \colb{\overline{Z}_1} \colb{\widetilde{Z}_1}
\end{align*}
As before, we attach three consecutive copies of flip pattern gadget to both $\overline{X}_1$ and 
$\overline{X}_2$.

Finally, we add for every $\alpha \in \{1,2\}$
\begin{itemize}%[noitemsep]
  \item a merge gadget that merges $\overline{Y}_\alpha$ and $\widetilde{Y}_\alpha$ to a pair  
$Y'_\alpha$,
  \item a merge gadget that merges $\overline{Z}_\alpha$ and $\widetilde{Z}_\alpha$ to a pair  
$Z'_\alpha$, and
  \item a follow gadget that connects $\overline{X}_\alpha$ to $X'_\alpha$.
\end{itemize}

It follows from the properties of the individual gadgets that in any grid-preserving embedding, the 
pair $X'_\alpha$ is mapped either to $Y'_\alpha$ or $Z'_\alpha$ for each $\alpha \in \{1,2\}$. On 
the other hand, any grid-preserving embedding of the first $i$ tiles can be extended to the points 
added in the construction. The crucial observation is that in the step when the order of 
$\overline{X}_1$ and $\overline{X}_2$ is reversed, four neighboring pairs of atomic pairs are 
flipped in $\cT$ which form all possible combinations of $Y_1, Z_1$ and $Y_2, Z_2$. Therefore, we 
can always choose where to map the pairs $\overline{X}_1$ and $\overline{X}_2$ at the first step to 
arrive at one of these pairs that get flipped. 

Notice that we can use the described construction to swap arbitrary subset of disjoint neighboring 
atomic pairs in the pattern in parallel. It is easy to see that we can sort any sequence of length 
$\ell$ using at most $\ell$ rounds of such parallel swaps. 
Since the total number of atomic pairs in the pattern after the multiplication phase is exactly 
$3m$, the number of rounds needed is at most $3m$. Individually, each of the $3m$ steps uses only a 
constant number of layers of gadgets and thus only a constant amount of $\cD$-entries. Therefore, 
the sorting phase takes in total $O(m)$ $\cD$-entries.

\subparagraph{Sorting phase via juxtapositions.}
We claim that the sorting phase can be done using significantly fewer $\cD$-entries if the class 
$\cD$ contains a monotone juxtaposition. 

Suppose that at the beginning of the sorting phase there are atomic pairs $X_1, X_2, \ldots, X_{3m}$ 
in the pattern and atomic pairs $Y_1, Z_1, Y_2, Z_2, \ldots, Y_{3m}, Z_{3m}$ in the text (both in 
this precise order) such that in any grid-preserving mapping, $X_{i}$ is mapped either to $Y_{i}$ or 
to $Z_{i}$.
Our goal is to rearrange the pairs so that there are atomic pairs $\overline{X}_{\sigma_1}, 
\overline{X}_{\sigma_2}, \ldots, \overline{X}_{\sigma_{3m}}$ in the pattern and atomic pairs 
$\overline{Y}_{\sigma_1},\overline{Z}_{\sigma_1}, \overline{Y}_{\sigma_2}, \overline{Z}_{\sigma_2}, 
\ldots, \overline{Y}_{\sigma_{3m}}, \overline{Z}_{\sigma_{3m}}$ in the text in this order given by 
permutation $\sigma$ of length $3m$.
Moreover, in any grid-preserving embedding $\overline{X}_i$ is mapped to $\overline{Y}_i$ if $X_i$ 
is mapped to $Y_i$, and it is mapped to $\overline{Z}_i$ if $X_i$ is mapped to $Z_i$.

Suppose that the proper-turning path $p_1, p_2, \ldots$ contains $L$ entries equal to $\cD$. Since 
there are in total only 4 possible images of $\cD$ given by the orientation $\cF$, there exists a 
monotone juxtaposition $\cB$ and at least $L/4$ indices $j$ such that $\cM_{p_j} = \cD$ and the 
class $\cF(\cM)_{p_j}$ contains $\cB$. We are going to use only these entries for the 
sorting phase.

First, suppose that $\cB = \Grid(\Inc \, \Inc)$. Let $p_i$ be an entry such that $\cF(\cM)_{p_i}$ contains $\cB$ 
and recall that we consider only those non-monotone entries that share a common row with their 
predecessor, i.e., $p_i$ shares a common row with $p_{i-1}$. We can construct a tile $Q_i$  from two 
tiles $Q^1_i$ and $Q^2_i$ where both $Q^1_i$ and $Q^2_i$ contain an increasing point set and $Q^1_i$ 
and $Q^2_i$ are placed next to each other. In particular, we can then attach to any atomic pair $A$ 
in $Q_{i-1}$ a copy gadget connecting $A$ to an atomic pair $B$ and choose arbitrarily whether $B$ 
lies in $Q^1_i$ or $Q^2_i$.

Let $J_1$ and $J_2$ be a partition of the set $[3m]$. We attach a copy gadget ending in $Q^\alpha_i$ 
to each $X_j, Y_j$ and $Z_j$ with $j \in J_\alpha$ for each $\alpha \in \{1,2\}$. In this way, we 
rearranged the atomic pairs in $\cP$ such that first we have all pairs $X_j$ such that $j \in J_1$ 
(sorted by the indices) followed by all pairs $X_j$ for $j \in J_2$ (again themselves sorted by the 
indices). Similarly in $\cT$, we first have $Y_j, Z_j$ for $j \in J_1$ followed by $Y_j, Z_j$ for 
$j\in J_2$. See Figure~\ref{fig:sort}.

Notice that the described operation simulates a stable bucket sort with two buckets. Therefore, we 
can simulate radix sort and rearrange the atomic pairs into arbitrary order given by $\sigma$ by 
iterating this operation $O(\log m)$ times. In this way, the whole sorting phase uses only $O(\log 
m)$ entries equal to $\cD$.

\begin{figure}
  \centering
  \raisebox{-0.5\height}{\includegraphics[scale=0.4]{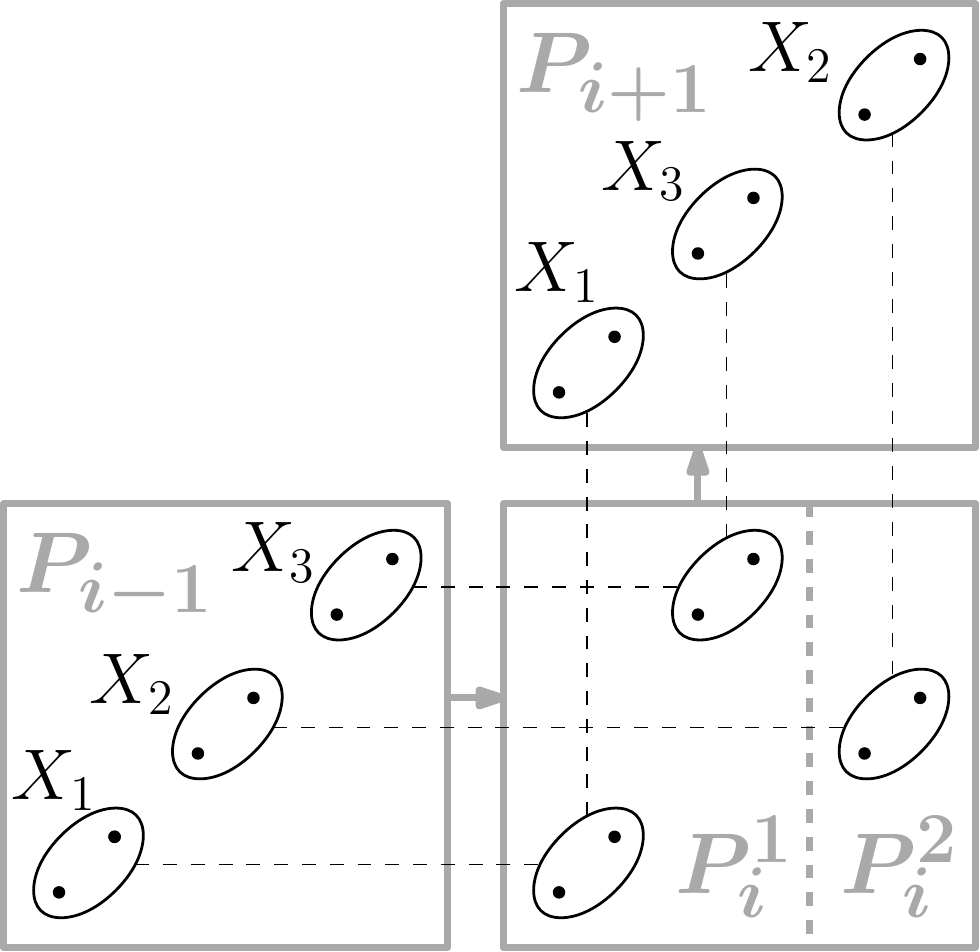}}
  \hspace{1in}
  \raisebox{-0.5\height}{\includegraphics[scale=0.4]{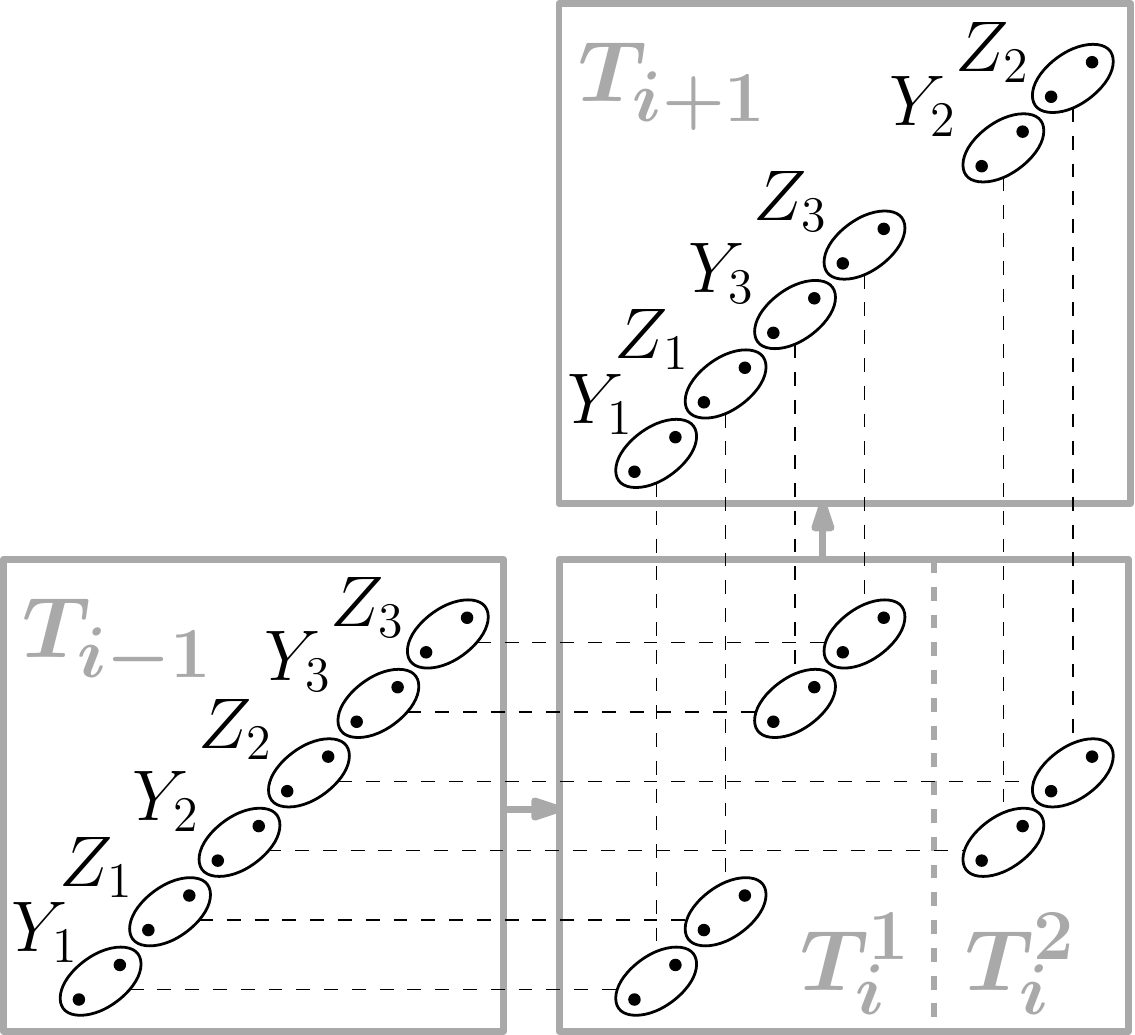}}
  \caption[Example of sorting by juxtaposition]{Example of one sorting step using the juxtaposition 
$\cB = \Grid(\Inc\,\Inc)$ and 
partition of the set $\{1,2,3\}$ into $J_1 = \{1,3\}$ and $J_2 = \{2\}$. The pattern tiles are on 
the left, the text tiles are on the right.}
  \label{fig:sort}
\end{figure}

Now suppose that $\cB = \Grid(\cA_1 \; \cA_2)$ where $\cA_1$, $\cA_2$ are two arbitrary monotone 
classes. We can use the same construction as before. However, we need to be careful that some of 
$Q^1_i$ and $Q^2_i$ should actually contain a decreasing sequence instead of increasing. Following 
the procedure as before, we still have in $\cP$ first all the pairs $X_j$ for $j \in J_1$ followed 
by the pairs $X_j$ for $j \in J_2$. However, the order of pairs $X_j$ for $j \in J_\alpha$ is now 
reversed if $\cA_\alpha = \Dec$. This can be fixed by using one extra entry whose image under $\cF$ is $\cB$. 
We partition $[3m]$ into $J'_1 = J_1$ and $J'_2 = J_2$ and attach the same construction once again. 
The property that every $X_j$ for $j \in J_1$ precedes every $X_j$ for $j \in J_2$ is preserved and 
moreover, any of the two parts that were reversed in the first step is now ordered again in the 
correct increasing order by the indices. Thus, we again implemented a stable bucket sort with two 
buckets and we can rearrange the atomic pairs into arbitrary order using $O(\log m)$ such steps.

Now, suppose that $\cB = \Grid\begin{psmallmatrix}\cA_2 \\ \cA_1\end{psmallmatrix}$ for two monotone 
classes $\cA_1$, $\cA_2$. In this case, we can again construct the tile $Q_i$ from two tiles $Q^1_i$ 
and $Q^2_i$ where both $Q^1_i$ and $Q^2_i$ contain a monotone point set determined by the classes 
$\cA_1$, $\cA_2$ but $Q^1_i$ and $Q^2_i$ are this time placed on top of each other. First, suppose 
that $\cA_1 = \cA_2 = \Inc$.

This time we can choose how to split the set $[3m]$ into two sets of consecutive numbers $J_1$ and 
$J_2$ such that every element of $J_1$ is smaller than every element of $J_2$ and again connect a 
copy gadget ending in $Q^\alpha_i$ to each $X_i$, $Y_j$, and $Z_j$ with $j \in J_\alpha$ for each 
$\alpha \in \{1,2\}$. However, this time, we also choose the relative order of gadgets between 
$Q^1_i$ and $Q^2_i$, which enables us to arbitrarily interleave the sequences of atomic pairs 
indexed by $J_1$ and $J_2$, respectively. Effectively, we implemented an inverse operation to the 
stable bucket sort with two buckets -- we split the sequence of atomic pairs into two (uneven) 
halves and then interleave them arbitrarily while keeping each of the two parts in the 
original order. Therefore, we can again rearrange the atomic pairs into an arbitrary order iterating 
this operation $O(\log m)$ times and thus using only $O(\log m)$ entries equal to $\cD$.

Finally, it remains to deal with the case when $\cB = \Grid\begin{psmallmatrix}\cA_2 \\ 
\cA_1\end{psmallmatrix}$ and $\cA_1$, $\cA_2$ are arbitrary monotone classes. Observe that the 
construction described in the previous paragraph results in interleaving the two sequences and 
simultaneously reversing the order of pairs in $J_\alpha$ if $\cA_\alpha = \Dec$. We can easily 
resolve this by prepending an extra step of the same construction. We first split $[3m]$ into the 
same halves $J'_1 = J_1$ and $J'_2 = J_2$ and use the described construction. However, we do not 
interleave the two sets and keep them in the same order. Therefore, we have only reversed the order 
of pairs in the half $J_\alpha$ if $\cA_\alpha = \Dec$. If we then perform the actual sorting step, 
each of the sequences will end up in the correct original order.

\subparagraph{Evaluation phase.}

After the sorting phase, the atomic pairs in the pattern are bundled into consecutive triples 
determined by clauses of $\Phi$. We show how to test whether a clause $K_j = (L_a \vee L_b \vee 
L_c)$ is satisfied where $L_\alpha \in \{x_\alpha, \neg x_\alpha\}$ for $\alpha \in \{a,b,c\}$ and 
$a < b < c$.

Suppose $X_a, X_b$ and $X_c$ are the three neighboring atomic pairs in $\cP$ that correspond to the 
three literals in $K_j$. In $\cT$, there are six neighboring atomic pairs $Y_a$, $Z_a$,  $Y_b$, 
$Z_b$,  $Y_c$, $Z_c$ in this precise order such that in any grid-preserving embedding, the pair 
$X_\alpha$ is mapped to either $Y_\alpha$ or $Z_\alpha$ for every $\alpha\in\{a,b,c\}$.

First, we claim that we can without loss of generality assume that in fact $K_j = (x_a \vee x_b \vee 
x_c)$. If that was not the case, we could use one layer of flip gadgets to reverse the order of 
$Y_\alpha$, $Z_\alpha$ for each $\alpha$ such that $L_\alpha = \neg x_\alpha$.

As in the sorting phase, we slightly abuse the notation and use the same letters to denote atomic 
pairs in different tiles so that any gadget with the same number of input and output atomic pairs 
carries the names through. We add one layer of gadgets, in particular
\begin{itemize}%[noitemsep]
  \item a choose gadget that branches $Z_b$ to $\overline{Z}_b$ and $\widetilde{Z}_b$, and
  \item pick gadgets to $Y_a, Z_a, Y_b, Y_c, Z_c, X_a, X_b$ and $X_c$.
\end{itemize}

We continue with adding two layers of flip gadgets, modifying the order of atomic pairs in the text 
as follows
\begin{align*}
  Y_a Z_a \flip{Y_b \overline{Z}_b} \widetilde{Z}_b \flip{Y_c Z_c} \rightarrow
  Y_a \flip{Z_a \overline{Z}_b} Y_b \flip{\widetilde{Z}_b Z_c} Y_c \rightarrow
  Y_a \overline{Z}_b Z_a Y_b Z_c \widetilde{Z}_b  Y_c,
\end{align*}
and two consecutive copies of flip pattern gadget to all $X_a$, $X_b$ and $X_c$.
This construction is done for each clause in parallel. Observe that it uses only constantly many 
layers of gadgets and thus uses only $O(1)$ $\cD$-entries of the path.

That concludes the construction of $\cP$ and $\cT$. Observe that each tile in both $\cP$ and $\cT$ 
contains $O(m)$ points. If the construction uses $L$ entries equal to $\cD$, then we need to start 
with a proper-turning path of length $(1/\eps)L = O(L)$ where $\eps$ is constant given by the 
$\cD$-rich property of $\cC$. However, the total amount of $\cD$-entries used by the construction 
depends on how many steps are needed for the sorting phase. If $\cD$ contains a monotone 
juxtaposition, then the total amount of $\cD$-entries used is $O(\log m)$ and thus the length of 
both $\pi$ and $\tau$ is $O(m \log m)$. Otherwise, if we sort using only gadgets, the total amount 
of $\cD$-entries used by the reduction is $O(m)$ and thus the length of both $\pi$ and $\tau$ is 
$O(m^2)$. This gives rise to the two different lower bounds for the runtime of an algorithm solving 
\CPPM{$\cC$} under ETH.

\subparagraph{Beyond grid-preserving embeddings.}
First, we modify both $\pi$ and $\tau$ so that any embedding that maps the image of $P_1$ to the 
image of $T_1$ must already be grid-preserving. To that end, take $\cP'$ to be the family of tiles 
obtained from $\cP$ by adding atomic pairs $A_1, A_2$ to the initial tile $P_1$ so that $A_1$ is 
to the left and below everything else in $P_1$ and $A_2$ is to the right and above everything else 
in~$P_1$. We then attach to both $A_1$, $A_2$ a chain of copy gadgets spreading all the way to the 
last tile used by $\cP$. We obtain $\cT'$ from $\cT$ in the same way by adding atomic pairs $B_1, 
B_2$ in $T_1$ and a chain of copy gadgets attached to each. We let $\pi'$ and $\tau'$ be the 
$\cF$-assemblies of $\cP'$ and~$\cT'$. 

Observe that in any embedding of $\pi'$ into $\tau'$ that maps the image of $P'_1$ to the image of 
$T'_1$, the image of $A_\alpha$ is mapped to the image of $B_\alpha$ for each $\alpha \in \{1,2\}$. 
The chain of copy gadgets attached to $A_\alpha$ then must map to the chain of gadgets attached to 
$B_\alpha$ and these chains force that the image of $P'_i$ maps to the image of $T'_i$ for 
every~$i$. Observe that $|\pi'| = O(|\pi|)$ and $|\tau'| = O(|\tau|)$.

Finally, we modify $\pi'$ and $\tau'$ to obtain permutations $\pi''$ and $\tau''$ such that any 
embedding of $\pi''$ into $\tau''$ can be translated to an embedding of $\pi'$ into $\tau'$ that 
maps $P'_1$ to $T'_1$ and vice versa. Let $r$ be the lowest point in $P'_1$ (i.e. the lower point of 
$A_1$) and let $q$ be the topmost point of $P'_1$ (i.e. the upper point of $A_2$). Similarly, let 
$s$ be the lowest point in $T'_1$ and $t$ be the topmost point of $T'_1$ in $\tau'$. The family of 
tiles $\cP''$ is obtained from $\cP'$ by inflating both $r$ and $q$ with an increasing sequence of 
length $|\tau'| + 1$ and similarly, the family $\cT''$ is obtained from $\cT''$ by inflating both 
$s$ and $t$ with an increasing sequence of length $|\tau'| + 1$. We call the points obtained by 
inflating $r$ and $q$ \emph{lower anchors} and the ones obtained by inflating $s$ and $t$  
\emph{upper anchors}. We let $\pi''$ and $\tau''$ be the $\cF$-assemblies of $\cP''$ and $\cT''$.  
Observe that these modifications did not change the asymptotic size of the input as $|\pi''| = 
O(|\tau'|) = O(|\tau|)$ and $|\tau''| = O(|\tau'|)= O(|\tau|)$.

\subsection{Correctness}

The construction described so far shows how to construct, from a 3-SAT formula $\Phi$ with $n$ 
variables and $m$ clauses, a pair of permutations $\pi''$ and $\tau''$. It follows from the 
construction that both $\pi''$ and $\tau''$ belong to the class $\cC$ from 
Theorem~\ref{thm-hardness}. Additionally, if $\cD$ contains a monotone juxtaposition, we may 
implement the sorting phase via juxtapositions, and the size of $\pi$ and $\tau$ is bounded by 
$O(m\log m)$; otherwise we implement sorting via gadgets and the size of $\pi$ and $\tau$ is bounded 
by $O(m^2)$. 

The ETH, together with the Sparsification Lemma of Impagliazzo, Paturi and Zane~\cite{IPZ}, 
implies that no algorithm may solve 3-SAT in time $2^{o(n+m)}$. Thus, to prove 
Theorem~\ref{thm-hardness}, it remains to prove the correctness of the reduction, i.e., to show that 
$\Phi$ is satisfiable if and only if $\pi$ is contained in~$\tau$.

\subparagraph{The ``only if'' part}
Let $\Phi$ be a satisfiable formula and fix arbitrary satisfying assignment represented by a 
function $\rho\colon [n] \to \{T,F\}$, where $\rho(k) = T$ if and only if the variable $x_k$ is set 
to true in the chosen assignment.

We map the image of $P''_1$ to the image of $T''_1$. In the assignment phase, we map the pair 
$X^1_{k,1}$ to $Y^1_{k,1}$ if $\rho(k) = T$, otherwise we map it to $Z^1_{k,1}$. The embedding of 
the multiplication phase is then uniquely determined by the properties of the gadgets. In particular 
at the end of the multiplication phase, the pair $X^\ell_{k,j}$ for every $j$ is mapped to 
$Y^\ell_{k,j}$ if $\rho(k) = T$ and to $Z^\ell_{k,j}$ otherwise.

The mapping is then straightforwardly extended through the sorting phase. We just have to be careful 
when swapping two neighboring pairs to pick correctly between $\overline{Y}_\alpha$ and 
$\widetilde{Y}_\alpha$ (or $\overline{Z}_\alpha$ and $\widetilde{Z}_\alpha$) such that the swap 
itself is possible.

Recall that at the end of the sorting phase, we have for each clause $K_j = (L_a \vee L_b \vee L_c)$ 
a consecutive block of three pairs $X_a, X_b, X_c$ in a pattern tile and a consecutive block of six 
pairs $Y_a$, $Z_a$,  $Y_b$, $Z_b$,  $Y_c$, $Z_c$ in a text tile such that $X_\alpha$ is mapped to 
$Y_\alpha$ if $\rho(\alpha) = T$ for every $\alpha = \{a,b,c\}$. We also showed that by attaching an 
extra layer of flip gadgets and suitably renaming the atomic pairs, we can assume that $K_j$ is, in 
fact, $(x_a \vee x_b \vee x_c)$. If $\rho(b) = T$ then the mapping is uniquely determined and 
possible regardless of the values $\rho(a)$ and $\rho(b)$. On the other hand, if $\rho(b) = F$ we 
need to select where to map the pair $X_b$ in the choose gadget that branches $Z_b$ to 
$\overline{Z}_b$ and $\widetilde{Z}_b$. We pick the pair $\overline{Z}_b$ if $\rho(a) = T$ and the 
pair $\widetilde{Z}_b$ if $\rho(c)= T$. Note that at least one of the two cases must happen; 
otherwise, the clause $K_j$ would not be satisfied. It is easy to see that in both cases, the 
relative order of the pairs in the text does not change, and we defined a valid embedding. 

\subparagraph{The ``if'' part}
Let $\phi$ be an embedding of $\pi''$ into $\tau''$. The total length of the anchors in both  
$\pi''$ and $\tau''$ is $2|\tau'| + 2$. Therefore, at least $|\tau'| + 2$ points of the anchors in 
$\pi''$ must be mapped to the anchors in $\tau''$ as there are only $|\tau'|$ remaining points. In 
particular, there is at least one point in each anchor of $\pi''$ that maps to anchors in $\tau''$. 
Moreover, this implies that there is a point $q'$ in the lower anchor of $\pi''$ mapped to the point 
$r'$ in the lower anchor of $\tau''$ and there is a point $s'$ in the upper anchor of $\pi''$ mapped 
to the point $t'$ in the upper anchor of $\tau''$.

We claim that no point of $P''_1$ that does not belong to the anchors can be mapped to the anchors 
in $\tau''$. This holds since there are copy gadgets attached to both $A_1$ and $A_2$ that 
vertically separate the anchors from the rest of the tile $P''_1$. Furthermore, the chains of copy 
gadgets attached to $A_1$ and $A_2$ then force the rest of the embedding to be grid-preserving, and 
thus it straightforwardly translates to a grid-preserving embedding of $\pi$ into~$\tau$.

Using the grid-preserving embedding, we now define a satisfying assignment $\rho\colon [n] \to 
\{T,F\}$. We set $\rho(k) = T$ if the pair $X^1_{k,1}$ is mapped to $Y^1_{k,1}$ and we set $\rho(k) 
= F$ if it is mapped to $Z^1_{k,1}$. This property is clearly maintained throughout the 
multiplication and sorting phases due to the properties of gadgets.

At the beginning of the evaluation phase, we thus have for each clause $K_j = (L_a \vee L_b \vee 
L_c)$ a consecutive block of three pairs $X_a, X_b, X_c$ in a pattern tile and a consecutive block 
of six pairs $Y_a$, $Z_a$,  $Y_b$, $Z_b$,  $Y_c$, $Z_c$ in a text tile such that $X_\alpha$ is 
mapped to $Y_\alpha$ if and only if $\rho(\alpha) = T$ for every $\alpha = \{a,b,c\}$. As before, we 
assume that $K_j = (x_a \vee x_b \vee x_c)$. It remains to argue that it cannot happen that the pair 
$X_\alpha$ is mapped to $Z_\alpha$ for every $\alpha \in \{a,b,c\}$. If that was the case, the pairs 
$X_a$, $X_b$, $X_c$ in the very last tile would be mapped either to the pairs $Z_a$, 
$\widetilde{Z}_b$, $Z_c$ or to the pairs $Z_a$, $\overline{Z}_b$, $Z_c$ . However in both these 
cases, the relative order of these pairs differs from the order of $X_a$, $X_b$ and $X_c$ in the 
pattern and we arrive at contradiction. Thus, every clause is satisfied with the assignment given 
by~$\rho$.

This completes the proof of Theorem~\ref{thm-hardness}.

\subsection{Consequences}

In the rest of this section, we focus on presenting examples of classes that satisfy the technical 
``rich path'' property, which is the backbone of all our hardness arguments.

\begin{proposition}\label{pro-cyclehard}
Let $\cD$ be a non-monotone-griddable class that is sum-closed or skew-closed.
If $\cM$ is a gridding matrix whose cell graph $G_\cM$ contains a proper-turning cycle with at least 
one entry equal to $\cD$, then $\Grid(\cM)$ has the computable $\cD$-rich path property.
\end{proposition}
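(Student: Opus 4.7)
For each $k$, the plan is to construct a gridding matrix $\cN$ such that $\Grid(\cN) \subseteq \Grid(\cM)$, the cell graph of $\cN$ is a proper-turning path of length $\Theta(k)$, and at least $k$ of its cells equal $\cD$. This gives the $\cD$-rich path property with $\eps = 1/\ell$ (where $\ell$ is the length of the given cycle), and computability follows since the construction will be explicit.

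First, I would assume without loss of generality that $\cD$ is sum-closed; the skew-closed case is handled by applying a reversal symmetry to $\cM$. Let $C = v_0, v_1, \ldots, v_{\ell-1}, v_0$ denote the proper-turning cycle in $G_\cM$, with $v_0$ equal to $\cD$; since the edges of $C$ alternate between row- and column-edges, $\ell$ is even. For each $i \ge 1$, the Erd\H os--Szekeres theorem guarantees that $\cM_{v_i}$ contains $\Inc$ or $\Dec$ as a subclass, both of which are sum-closed; pick such a subclass $\cA_i \subseteq \cM_{v_i}$, and set $\cA_0 = \cD$.

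Next, I would construct $\cN$ by inflating the cells of $C$. The $\cD$-cell $v_0$ is replaced by $k$ sub-cells of class $\cA_0 = \cD$ placed along the main diagonal of its sub-matrix; each other $v_i$ is replaced by $k-1$ sub-cells of class $\cA_i$ along a diagonal of its own sub-matrix, with the diagonal of exactly one chosen neighbor of $v_0$ in $C$ shifted by one sub-row (or sub-column) relative to the others. Cells of $\cM$ outside $C$ are inherited unchanged. Sum-closedness of each $\cA_i$ implies that the grid class of an increasing-diagonal arrangement of $\cA_i$-cells equals $\cA_i$, so the restriction of any $\cN$-gridded permutation to the sub-region of $v_i$ lies in $\cA_i \subseteq \cM_{v_i}$, yielding $\Grid(\cN) \subseteq \Grid(\cM)$.

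The hard part will be verifying that the one-unit shift yields a single proper-turning path rather than a disjoint union of cycles. Without the shift, the refined cell graph is a disjoint union of $k$ copies of $C$, obtained by traversing the cycle through the $j$-th sub-cell of each cycle cell. Shifting one diagonal by one unit glues consecutive sub-cycles together and simultaneously removes one neighbor from each of the two extreme sub-cells of $v_0$, making them the endpoints of the path. I would first verify the mechanism in detail on the $4$-cycle, where it produces a path of length $4k - 3$ with $k$ sub-cells equal to $\cD$, and then generalize to arbitrary even $\ell$, obtaining a path of length $\ell(k-1) + 1$ with $k$ sub-cells of $\cD$ and hence the desired $\cD$-rich path property with $\eps = 1/\ell$.
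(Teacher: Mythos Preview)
Your overall strategy---refine each cell of the cycle into a diagonal of sub-cells and shift one diagonal to splice the copies into a single path---is exactly the paper's approach. However, there is a genuine gap in the argument, and it shows up already in a factual slip: $\Dec$ is \emph{not} sum-closed (it is skew-closed), so your sentence ``both of which are sum-closed'' is false. This is not a cosmetic error; it is the symptom of the missing idea.

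The substantive issue is your claim that ``without the shift, the refined cell graph is a disjoint union of $k$ copies of $C$.'' This is only true when the cycle admits a \emph{consistent orientation}, i.e., an orientation $\cF$ under which every cell becomes sum-closed. If some $\cA_i = \Dec$ must be placed on an anti-diagonal while others sit on main diagonals, then walking once around the cycle can send layer $j$ to layer $k+1-j$ rather than back to layer~$j$. For instance, take the $2\times 2$ matrix $\bigl(\begin{smallmatrix}\Inc&\Dec\\\cD&\Inc\end{smallmatrix}\bigr)$: refining by $k$ with the obvious diagonals gives components that pair layer $j$ with layer $k+1-j$, so you get $\lfloor k/2\rfloor$ cycles of length $8$ rather than $k$ cycles of length~$4$, and the single shift no longer produces a path. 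In general a proper-turning cycle need not have a consistent orientation. The paper closes this gap by invoking a result of Albert et al.\ that the $2$-fold refinement $\cM^{\times 2}$ of any monotone gridding matrix \emph{does} admit a consistent orientation; one first passes to $\cM^{\times 2}$ if necessary, then uses the orientation to label rows and columns so that every nonempty entry of $\cM^{\times L}$ has characteristic $(s,s)$, which is precisely what makes the refined cell graph split into $L$ copies of $C$ before the shift. Your proposal needs this step (or an equivalent device) to go through.
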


\begin{proof}
% We suppose that $\cD$ is either sum-closed or skew-closed.
% In the first case this is already guaranteed, and in the second case, we simply take instead of 
% $\cD$ just the class $ \oplus21$ or $\ominus 12$ contained in $\cD$ due to 
% Theorem~\ref{thm-monotone-griddable}.
% Our goal is to show that $\Grid(\cM)$ has the computable $\cD$-rich path property; the claim then 
% follows from Theorem~\ref{thm-hardness}.
We note that the proof closely follows a proof of a similar claim for monotone grid classes by 
Jelínek et al.~\cite[Lemma 3.5]{Jelinek2020}.

We may assume, without loss of generality, that the cell graph of $\cM$ consists of a single cycle, 
that it contains a unique entry equal to $\cD$, and that all the remaining nonempty entries are 
equal to $\Inc$ or to~$\Dec$. This is because each infinite permutation class contains either $\Inc$ 
or $\Dec$ as a subclass, and replacing an entry of $\cM$ by its infinite subclass can only change   
$\Grid(\cM)$ into its subclass. If we can establish the $\cD$-rich path property for the subclass, 
then it also holds for the class $\Grid(\cM)$ itself.

We may also assume that $\cD$ is sum-closed, since the skew-closed case is symmetric. In 
particular, $\cD$ contains $\oplus21$ as a subclass. 

\begin{figure}
\includegraphics[width=\textwidth]{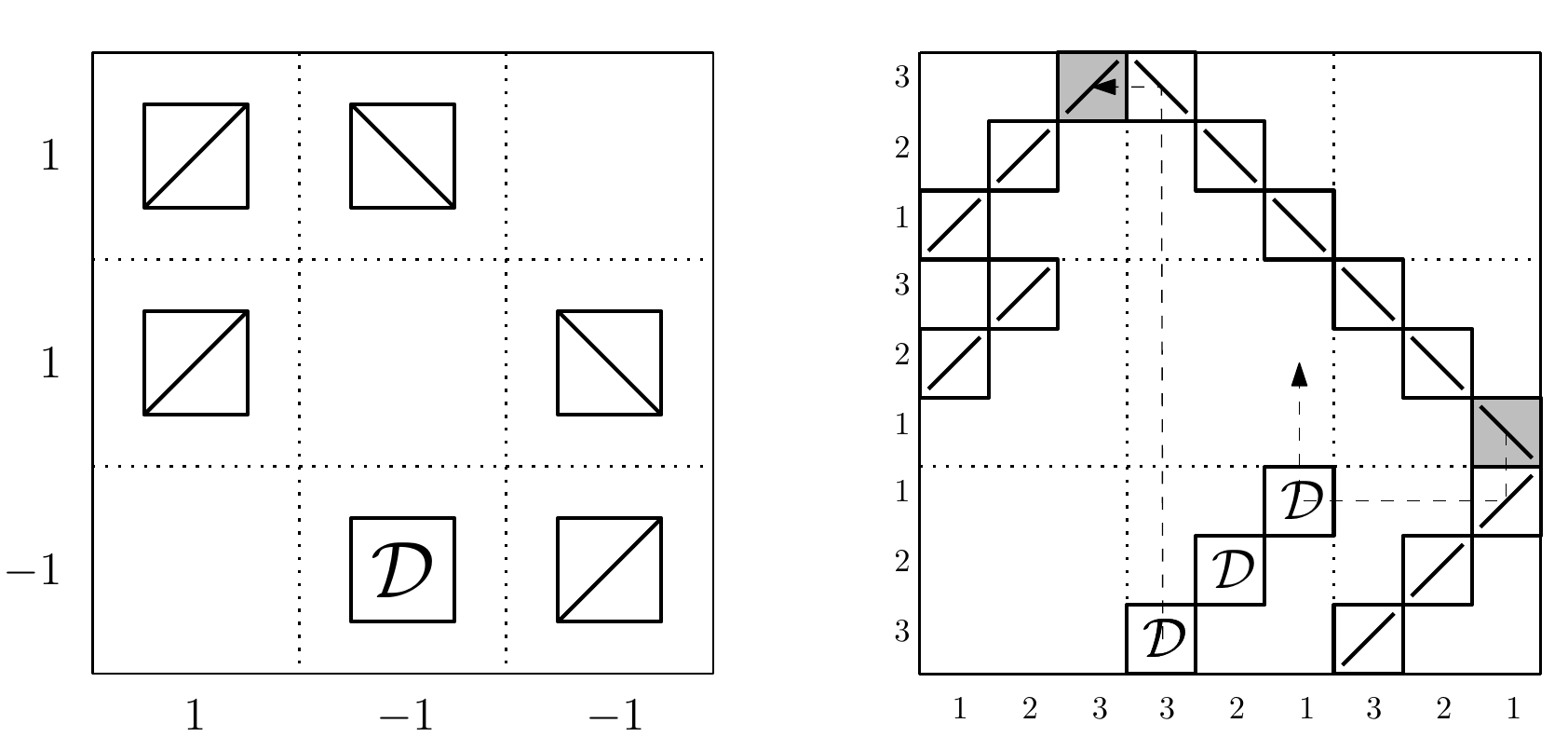}
\caption{Illustration of the proof of Proposition~\ref{pro-cyclehard}. Left: a gridding matrix 
$\cM$ whose cell graph is a cycle with a single entry equal to~$\cD$. The numbers along the bottom 
and the left edge form an orientation that maps each entry to a sum-closed class. Right: a gridding 
matrix whose grid class is contained in $\Grid(\cM)$ and whose cell graph is a path. The endvertices 
of the path are highlighted in gray, and the first and last few steps of the path are shown as dashed 
lines. The numbers along the edges are the labels forming the characteristic of each 
entry.}\label{fig-gridpath}
\end{figure}

Let $L$ be a given integer. We show how to obtain a grid subclass of $\Grid(\cM)$ whose cell graph is 
a proper-turning path of length at least $L$ that contains a constant fraction of $\cD$-entries. 
Refer to Figure~\ref{fig-gridpath}.
Suppose $\cN$ is a $k \times \ell$ gridding matrix whose every entry is either sum-closed or skew-closed. The \emph{refinement $\cN^{\times q}$} of $\cN$ is the $qk \times q\ell$ matrix obtained from $\cN$ by replacing the entry $\cN_{i,j}$ with
\begin{itemize}%[noitemsep]
\item a $q\times q$ diagonal matrix with all the non-empty entries equal to $\cN_{i,j}$ if $\cN_{i,j}$ is sum-closed,
\item a $q\times q$ anti-diagonal matrix with all the non-empty entries equal to $\cN_{i,j}$ if $\cN_{i,j}$ is skew-closed.
\end{itemize}
It is easy to see that $\Grid(\cN^{\times q})$ is a subclass of $\Grid(\cN)$. We call the submatrix  
of $\cN^{\times q}$ formed by the entries $\cN^{\times q}_{a,b}$ for $q\cdot i < a \le (q+1) \cdot i$ 
and $q\cdot j < b \le (q+1) \cdot j$ the \emph{$(i,j)$-block} of $\cN^{\times q}$. 

Importantly, it follows from the work of Albert et al.~\cite[Proposition 4.1]{Albert2013} that for every monotone gridding matrix $\cN$, there exists a consistent orientation of the refinement $\cN^{\times 2}$.
Translating it to our setting, we can assume that there is a $k \times \ell$ orientation $\cF$ such 
that the image of $\cM_{i,j}$ under $\cF$ is sum-closed for every $i \in [k]$ and $j \in [\ell]$. 
If that is not the case for $\cM$, we simply start with $\cM^{\times 2}$ instead.

Given such an orientation $\cF = (f_c, f_r)$, we label the rows and columns of the refinement 
$\cM^{\times L}$ using the set $[L]$.
The $L$-tuple of columns created from the $i$-th column of $\cM$ is labeled in the increasing order 
from left to right if $f_c(i)$ is positive and right to left otherwise.
Similarly, the $L$-tuple of rows created from the $j$-th row of $\cM$ is labeled in the increasing 
order from bottom to top if $f_r(j)$ is positive and top to bottom otherwise.
The \emph{characteristic of an entry} in $\cM^{\times L}$ is the pair of labels given to its column and row.
Observe that each non-empty entry in $\cM^{\times L}$ has a characteristic of the form $(s,s)$ for 
some $s \in [L]$ by the choice of orientation.
Therefore, $G_{\cM^{\times L}}$ consists exactly of $L$ connected components, each corresponding to a 
copy of~$\cM$.

We pick an arbitrary non-empty monotone entry $\cM_{i,j}$ of $\cM$ and obtain a matrix $\cM_L$ by 
replacing the $(i,j)$-block in $\cM^{\times q}$ with the $q \times q$ matrix whose only non-empty 
entries are the ones with characteristic $(s,s+1)$ for all $s \in [L-1]$ and they are all equal to 
$\cM_{i,j}$. $\Grid(\cM_L)$ is a subclass of $\Grid(\cM)$ since the modified $(i,j)$-block 
corresponds to shifting the original (anti-)diagonal matrix by one row either up or down, depending 
on the orientation of the $j$-th row of~$\cM$.

Observe that we connected all the $L$ copies of $\cM$ into a single long path.
Moreover, the path contains $L-1$ entries in the $(i,j)$-block and $L$ entries in every other non-empty block. Therefore, a constant fraction of its entries belong to the $(a,b)$-block such that $\cM_{a,b} = \cD$ and thus are equal to $\cD$.
It is easy to see that the described procedure is constructive and can easily be implemented to run in polynomial time.
Therefore, $\Grid(\cM)$ indeed has the computable $\cD$-rich path property.
\end{proof}

Combining Proposition~\ref{pro-cyclehard} with Theorem~\ref{thm-hardness}, we get the following 
corollary. Note that in the corollary, if $\cD$ fails to be sum-closed or skew-closed, we may 
simply replace it with $\oplus21$ or $\ominus12$, since at least one of these two classes is its 
subclass by Theorem~\ref{thm-monotone-griddable}.

\begin{corollary}
\label{cor:cycle-hard}
Let $\cD$ be a non-monotone-griddable class. If $\cM$ is a gridding matrix whose cell graph contains 
a proper-turning cycle with one entry equal to $\cD$, then \CPPM{$\Grid(\cM)$} is NP-complete. 
Moreover, unless ETH fails, there can be no algorithm for \CPPM{$\Grid(\cM)$} running
\begin{itemize}
	\item in time $2^{o(n /\log{n})}$ if $\cD$ additionally contains any monotone juxtaposition and 
is either sum-closed or skew-closed,
	\item in time $2^{o(\sqrt{n})}$ otherwise.
\end{itemize} 
\end{corollary}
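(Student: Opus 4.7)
The plan is to derive the corollary by feeding Proposition~\ref{pro-cyclehard} into Theorem~\ref{thm-hardness}. The only gap between the hypotheses of the two results is that Proposition~\ref{pro-cyclehard} requires $\cD$ to be sum-closed or skew-closed, while Corollary~\ref{cor:cycle-hard} does not; closing this gap is the single technical step, after which everything is bookkeeping.

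If $\cD$ is already sum-closed or skew-closed, Proposition~\ref{pro-cyclehard} applied to $\cM$ and $\cD$ immediately yields that $\Grid(\cM)$ has the computable $\cD$-rich path property. Theorem~\ref{thm-hardness} then delivers NP-completeness of \CPPM{$\Grid(\cM)$} together with the ETH lower bound. The first bullet of Theorem~\ref{thm-hardness} activates precisely when $\cD$ additionally contains a monotone juxtaposition, yielding $2^{o(n/\log n)}$; otherwise the weaker $2^{o(\sqrt{n})}$ bound applies. This matches both bullets of the corollary for this case.

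If $\cD$ is neither sum-closed nor skew-closed, I use Theorem~\ref{thm-monotone-griddable} to extract a Fibonacci subclass $\cD' \in \{\oplus 21, \ominus 12\}$ with $\cD' \subseteq \cD$; both candidates are respectively sum-closed and skew-closed, and at least one is contained in $\cD$ since $\cD$ fails to be monotone-griddable. Form $\cM'$ from $\cM$ by replacing the designated $\cD$-entry by $\cD'$; then $\Grid(\cM') \subseteq \Grid(\cM)$, so every valid instance of \CPPM{$\Grid(\cM')$} is automatically a valid instance of \CPPM{$\Grid(\cM)$} with the same answer, and hardness transfers upward. Proposition~\ref{pro-cyclehard} applied to $\cM'$ and $\cD'$ then produces the computable $\cD'$-rich path property, and Theorem~\ref{thm-hardness} yields NP-completeness of \CPPM{$\Grid(\cM)$}. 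Since $\cD'$ is a Fibonacci class and therefore contains no monotone juxtaposition, only the second bullet of Theorem~\ref{thm-hardness} is available, giving the $2^{o(\sqrt{n})}$ bound, consistent with the second bullet of the corollary.

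No substantive obstacle is expected. The cell graph of $\cM'$ still contains the same proper-turning cycle, now passing through the $\cD'$-labeled vertex, and the reduction inside Theorem~\ref{thm-hardness} produces permutations that already lie in $\Grid(\cM') \subseteq \Grid(\cM)$, so validity as instances of \CPPM{$\Grid(\cM)$} is automatic. The whole proof is a compact composition of two already-proved results with a Fibonacci-subclass replacement trick to bridge their hypotheses.
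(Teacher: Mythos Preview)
Your proposal is correct and follows exactly the approach the paper takes: combine Proposition~\ref{pro-cyclehard} with Theorem~\ref{thm-hardness}, and in the case where $\cD$ is neither sum-closed nor skew-closed, replace it by the Fibonacci subclass $\oplus21$ or $\ominus12$ guaranteed by Theorem~\ref{thm-monotone-griddable}. The paper states this in a single sentence preceding the corollary; your write-up simply fleshes out the same argument, including the observation that the Fibonacci class contains no monotone juxtaposition, which is why only the weaker bound survives in that case.
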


Three symmetry types of patterns of length 4 can be tackled with a special type of grid classes. The \emph{$k$-step increasing $(\cC, \cD)$-staircase}, denoted by $\St_k(\cC, \cD)$ is a grid class $\Grid(\cM)$ of a $k \times (k+1)$ gridding matrix $\cM$ such that the only non-empty entries in $\cM$ are $\cM_{i,i} = \cC$ and $\cM_{i, i+1} = \cD$ for every $i \in [k]$. In other words, the entries on the main diagonal are equal to $\cC$ and the entries of the adjacent lower diagonal are equal to $\cD$. The \emph{increasing $(\cC, \cD)$-staircase}, denoted by $\St(\cC, \cD)$, is the union of $\St_k(\cC, \cD)$ over all $k \in \mathbb{N}$.

Observe that if $\cC$ and $\cD$ are two infinite classes and one of them contains $\oplus21$ or $\ominus 12$ then Theorem~\ref{thm-hardness} applies and \CPPM{$\St(\cC, \cD)$} is NP-complete. Furthermore, if it also contains a monotone juxtaposition as a subclass, then the almost linear lower bound under ETH follows.
We proceed to show that three symmetry types of classes avoiding a pattern of length 4 actually contain such a staircase subclass.

\begin{proposition}
\label{prop:staircases}
For any sum-indecomposable permutation $\sigma$, $\St(\Inc, \Av(\sigma))$ is a subclass of $\Av(1 
\ominus \sigma)$.
\end{proposition}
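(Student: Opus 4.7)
The plan is to argue by contradiction. Suppose $\pi \in \St(\Inc, \Av(\sigma))$ contains $1 \ominus \sigma$, and realize the occurrence as a single point $p$ playing the role of the top-left ``1'' and a subset $S \cong \sigma$ lying strictly below and to the right of $p$. Fix any $\cM$-gridding of $\pi$ for the staircase matrix $\cM$ and consider the cell containing $p$. Since the only non-empty cells are the diagonal $\Inc$-cells $\cM_{i,i}$ and the adjacent $\Av(\sigma)$-cells $\cM_{i,i+1}$, two cases arise.

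In the easy case $p \in \cM_{i,i}$, every element of $S$ must simultaneously lie at column $\ge i$ and row $\le i$; inside the staircase, the only non-empty cell satisfying both conditions is $\cM_{i,i}$ itself. Hence $S$ sits inside a single $\Inc$-cell and $\sigma$ is increasing. But a sum-indecomposable permutation of length at least two contains $21$ and is therefore not increasing, so $|\sigma|=1$. In that degenerate case $\Av(\sigma)=\emptyset$, the staircase collapses to $\Inc = \Av(21) = \Av(1 \ominus 1)$, and the conclusion holds trivially.

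The main case is $p \in \cM_{i, i+1}$. Here the staircase geometry confines $S$ to at most three cells: $\cM_{i, i}$ (below $p$), $\cM_{i, i+1}$ (same cell as $p$), and $\cM_{i+1, i+1}$ (to the right of $p$, when $i<k$). Let $A$, $B$, $C$ denote the three parts; then $A, C \in \Inc$ and $B \in \Av(\sigma)$. Within the copy $S \cong \sigma$, the horizontal cut at $r_{i+1}$ forces $A$ to realise the $|A|$ smallest values of $\sigma$, and the vertical cut at $c_{i+1}$ forces $C$ to realise the $|C|$ rightmost positions of $\sigma$. The target is to conclude $A = C = \emptyset$, because then $B=S$ is a copy of $\sigma$ inside an $\Av(\sigma)$-cell, an immediate contradiction.

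The technical heart of the proof, and where sum-indecomposability of $\sigma$ is genuinely used, is the claim that a non-empty $A$ (and symmetrically a non-empty $C$) yields a non-trivial $\oplus$-decomposition of $\sigma$, of the form $\sigma = A' \oplus (B \cup C)'$ (respectively $(A\cup B)' \oplus C'$) after reduction. This step is the main obstacle: since $A$ and $B$ share column $i$ their $x$-coordinates may a priori interleave, so ``$A$'s values are a value-prefix of $\sigma$'' does not upgrade for free to ``$A$'s positions are a position-prefix of $\sigma$''. Pushing the upgrade through should exploit that $A$ is increasing, that $p$ anchors the configuration from above in the same column as $A\cup B$, and that $C$ is separated from $A\cup B$ by the vertical boundary $c_{i+1}$; together these should rule out the problematic interleavings and force $A$'s positions to be a genuine prefix of $S$. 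Once the decomposition is extracted, the assumed sum-indecomposability of $\sigma$ is violated, delivering the required contradiction.
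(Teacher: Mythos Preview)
Your Case~2 gap is real and cannot be closed: with the cell layout you use (namely $\cM_{i,i}=\Inc$ and $\cM_{i,i+1}=\Av(\sigma)$), the proposition is simply false. Take $\sigma=231$ and $k=1$, so the staircase is $\Grid\bigl(\begin{smallmatrix}\Av(231)\\ \Inc\end{smallmatrix}\bigr)$. The permutation $4231=1\ominus 231$ lies in this class via the gridding that places the value~$1$ in the $\Inc$-cell and the values $4,2,3$ in the $\Av(231)$-cell (their reduction is $312\in\Av(231)$). In your notation this is exactly the situation $A=\{s_3\}$, $B=\{s_1,s_2\}$, $C=\emptyset$: the element of $A$ sits at the \emph{last} position of $\sigma$, so $A$ is a value-prefix but not a position-prefix, and no amount of appealing to $p$ or to the monotonicity of $A$ will rescue the decomposition $\sigma=A'\oplus(B\cup C)'$.

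What has gone wrong is the orientation of the staircase: in the intended definition the $\Av(\sigma)$-cells sit at $\cM_{i+1,i}$, one step to the \emph{right} of the $\Inc$-cell in the same row (equivalently, one step \emph{below} the $\Inc$-cell in the same column). With that layout your two cases swap roles and the obstacle vanishes. If $p$ lies in an $\Av(\sigma)$-cell $(i{+}1,i)$, then the only non-empty cell weakly to its right and weakly below is $(i{+}1,i)$ itself, so all of $\{p\}\cup S$ sits in a single $\Av(\sigma)$-cell---an immediate contradiction. If $p$ lies in the $\Inc$-cell $(i,i)$, then no point of $S$ can share that cell with $p$ (together they would form a $21$), so $S$ is confined to the two $\Av(\sigma)$-cells $(i,i{-}1)$ and $(i{+}1,i)$. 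These two cells lie in different rows \emph{and} different columns, with $(i,i{-}1)$ strictly below and strictly left of $(i{+}1,i)$; hence the split of $S$ between them is an honest direct-sum decomposition of~$\sigma$, and sum-indecomposability finishes the argument at once. This is precisely the paper's proof.
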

\begin{proof}
Suppose for a contradiction that $\sigma' = 1 \ominus \sigma$ belongs to $\St(\Inc, \Av(\sigma))$. In particular it belongs to $\St_k(\Inc, \Av(\sigma))$ for some $k$ and there is a witnessing gridding. If the first element is not mapped to one of the $\Inc$-entries on the upper diagonal, then the whole $\sigma'$ must lie in a single $\Av(\sigma)$-entry on the lower diagonal, which is clearly not possible. Therefore, the first element must be mapped to one of the $\Inc$-entries. Notice that the rest of $\sigma'$ cannot be mapped to any of the $\Inc$-entries as it lies below and to the right of the first element. However, it cannot lie in more than one $\Av(\sigma)$-entry; otherwise, we could express $\sigma$ as a direct sum of two shorter permutations. Hence, there must be an occurrence of $\sigma$ in an $\Av(\sigma)$-entry which is clearly a contradiction.
\end{proof}

A direct consequence of Proposition~\ref{prop:staircases} is that taking $\sigma$ to be $321$, $312$ or $231$, we see that $\St(\Inc,\Av(321))\subseteq \Av(4321)$, $\St(\Inc,\Av(231))\subseteq\Av(4231)$ and $\St(\Inc,\Av(312))\subseteq \Av(4312)$. Note that the first inclusion is rather trivial and the latter two have been previously observed by Berendsohn~\cite{BerendsohnMs}.

We may easily observe that for any pattern $\sigma$ of size 3, the class $\Av(\sigma)$ contains the Fibonacci class or its reversal, as well as a monotone juxtaposition. Combining Proposition~\ref{prop:staircases} with Theorem~\ref{thm-hardness} yields the following consequence.

\begin{corollary}\label{cor-size4}
For any permutation $\sigma$ that contains a pattern symmetric to $4321$, to $4231$, or to $4312$, the problem \CPPM{$\Av(\sigma)$} is NP-complete, and unless ETH fails, it cannot be solved in time $2^{o(n/\log n)}$.
\end{corollary}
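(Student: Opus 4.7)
The plan is to deduce the corollary by reducing to the three base patterns $\rho \in \{4321, 4231, 4312\}$, then combining Proposition~\ref{prop:staircases} with Theorem~\ref{thm-hardness}. First I would observe two simple reductions. If a permutation $\sigma$ contains a pattern $\tau$ with $\tau \preceq \sigma$, then any $\tau$-avoider also avoids $\sigma$, so $\Av(\tau) \subseteq \Av(\sigma)$; any valid instance of \CPPM{$\Av(\tau)$} is automatically a valid instance of \CPPM{$\Av(\sigma)$}, so hardness of the former transfers to the latter. Second, applying any of the eight symmetries to both input permutations of \CPPM{$\Av(\tau)$} converts it into an instance of \CPPM{$\Av(\Psi(\tau))$} of the same size, so the complexity is a symmetry invariant. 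Together these two facts reduce the corollary to the three cases $\sigma \in \{4321, 4231, 4312\}$.

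For the core argument, write each such base case as $\sigma = 1 \ominus \rho'$ with $\rho' \in \{321, 231, 312\}$; each $\rho'$ is sum-indecomposable, so Proposition~\ref{prop:staircases} gives the inclusion $\St(\Inc, \Av(\rho')) \subseteq \Av(\sigma)$. The plan is then to apply Theorem~\ref{thm-hardness} to the class $\cC = \St(\Inc, \Av(\rho'))$ with $\cD = \Av(\rho')$, and transfer the resulting hardness to \CPPM{$\Av(\sigma)$} via the above inclusion. To verify the computable $\cD$-rich path property, I note that the defining matrix of $\St_k(\Inc, \Av(\rho'))$ is already a $k \times (k+1)$ gridding whose cell graph is precisely the proper-turning path
\[
(1,1),\ (1,2),\ (2,2),\ (2,3),\ \dots,\ (k,k),\ (k,k+1)
\]
of length $2k$, with exactly $k$ of the cells equal to $\cD = \Av(\rho')$. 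Thus the $\Av(\rho')$-rich path property holds with $\eps = 1/2$, and since the staircase is given by an entirely explicit formula, the computable part is immediate.

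It remains to check that $\Av(\rho')$ satisfies the two hypotheses of Theorem~\ref{thm-hardness}: it must be non-monotone-griddable and contain a monotone juxtaposition. For each $\rho' \in \{321,231,312\}$ one checks directly that the Fibonacci class $\oplus 21$ is a subclass of $\Av(\rho')$ (the elements of $\oplus 21$ are displacement-$1$ permutations, so they avoid any length-$3$ pattern with a descent of height $\ge 2$), which makes $\Av(\rho')$ not monotone-griddable by Theorem~\ref{thm-monotone-griddable}; moreover each such $\Av(\rho')$ contains a monotone juxtaposition (for instance $\Av(321) \supseteq \Grid(\Inc\;\Inc)$, and analogous inclusions hold for $\Av(231)$ and $\Av(312)$), as already noted just before the corollary. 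Theorem~\ref{thm-hardness} then delivers NP-completeness of \CPPM{$\St(\Inc, \Av(\rho'))$} together with the $2^{o(n/\log n)}$ ETH-conditional lower bound, and the inclusion $\St(\Inc, \Av(\rho')) \subseteq \Av(\sigma)$ transfers these bounds to \CPPM{$\Av(\sigma)$}. The whole argument is essentially bookkeeping on top of the two substantial results already proved; the one place that merits care is ensuring that the reduction produces instances inside the subclass $\St(\Inc, \Av(\rho'))$ (so that they remain valid instances of the promise problem for the larger class $\Av(\sigma)$), but this is guaranteed by Theorem~\ref{thm-hardness}, whose output lies in $\cC = \St(\Inc, \Av(\rho'))$ by construction.
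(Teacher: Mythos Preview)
Your proposal is correct and follows essentially the same approach as the paper: reduce via symmetry and containment to the three base patterns, invoke Proposition~\ref{prop:staircases} to exhibit the staircase subclass, verify the computable $\Av(\rho')$-rich path property directly from the staircase structure, and then apply Theorem~\ref{thm-hardness} using that $\Av(\rho')$ contains both $\oplus 21$ and a monotone juxtaposition. The only minor quibble is that your parenthetical justification for $\oplus 21 \subseteq \Av(321)$ via ``descent of height $\ge 2$'' does not literally apply to $321$ (whose descents have height $1$); but the inclusion is immediate anyway, since the paper already records that $\oplus 21 = \Av(321,312,231)$.
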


We verified by computer that there are only five symmetry types of patterns of length $5$ that do 
not contain any of $4321$, $4213$, $4312$ or their symmetries --- represented by $14523$, $24513$, 
$32154$, $42513$ and $41352$. Of these five, four can be handled by Corollary~\ref{cor:cycle-hard} 
since they contain a specific type of cyclic grid classes, as we now show.

\begin{proposition}
\label{prop:rich-2by2}
The class $\Av(\sigma)$ contains the class $\Grid(\cM)$ for the gridding matrix $\cM = \begin{psmallmatrix} \Dec & \Inc \\ \Av(\pi) & \Dec \end{psmallmatrix}$ whenever
\begin{itemize}%[noitemsep]
\item $\pi = 132$ and $\sigma = 14523$, or
\item $\pi = 231$ and $\sigma = 24513$, or
\item $\pi = 321$ and $\sigma \in \{32154, 42513\}$.
\end{itemize}
\end{proposition}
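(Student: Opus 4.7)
Since $\Grid(\cM)$ is a permutation class (closed under taking subpermutations), it suffices in each of the four cases to show that $\sigma$ itself has no $\cM$-gridding. Assume for contradiction that such a gridding exists, with vertical cut between columns $v$ and $v+1$ and horizontal cut between rows $h$ and $h+1$; this distributes the five points of $\sigma$ among the four cells $\mathrm{TL}, \mathrm{TR}, \mathrm{BL}, \mathrm{BR}$, which must respectively realize patterns from $\Dec$, $\Inc$, $\Av(\pi)$, and $\Dec$. The plan is a finite case analysis on $(v, h)$ showing that no choice satisfies all four constraints at once.

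My strategy is to first isolate a particular three-point copy of $\pi$ inside $\sigma$: positions $1, 2, 4$ with values $1, 4, 2$ (forming $132$) for $\sigma = 14523$; positions $1, 2, 4$ with values $2, 4, 1$ (forming $231$) for $\sigma = 24513$; positions $1, 2, 3$ with values $3, 2, 1$ (forming $321$) for $\sigma = 32154$; and positions $1, 2, 4$ with values $4, 2, 1$ (forming $321$) for $\sigma = 42513$. If all three chosen points end up in $\mathrm{BL}$, the $\Av(\pi)$ constraint fails immediately, so I may assume the cuts $v$ or $h$ separate the copy. This reduces the possibilities for $(v, h)$ to a small number, and I then verify by inspection that each remaining choice forces a non-decreasing pair into $\mathrm{TL}$ or $\mathrm{BR}$, or a non-increasing pair into $\mathrm{TR}$.

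To illustrate, consider $\sigma = 14523$ with $\pi = 132$. For $h \le 3$, the pair at positions $2, 3$ (pattern $12$) lies in the top row, forcing $v \le 2$ to avoid $12$ in $\mathrm{TL}$; for $h \le 2$ the point $(5, 3)$ then also enters the top row and, together with $(3, 5)$, creates a $21$-pattern in $\mathrm{TR}$, while for $h = 3$ the pair $(4, 2), (5, 3)$ forms a $12$-pattern in $\mathrm{BR}$. For $h = 4$, the two subcases $v \ge 4$ and $v \le 3$ yield respectively the $132$-copy inside $\mathrm{BL}$ and the $12$-pattern $(4,2),(5,3)$ inside $\mathrm{BR}$; for $h = 5$ the entire bottom row equals $\sigma$ and contains $132$. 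The other three cases admit analogous, mostly shorter, analyses driven by the highlighted $\pi$-copy and the extremal points of $\sigma$. I expect the main obstacle to be not conceptual but combinatorial bookkeeping: no single uniform argument appears to cover all four cases simultaneously, though each case individually is easily dispatched once the appropriate $\pi$-copy has been identified.
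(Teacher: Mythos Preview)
Your approach is essentially the same as the paper's: both argue that $\Grid(\cM)\subseteq\Av(\sigma)$ iff $\sigma\notin\Grid(\cM)$, and then eliminate every possible placement of the two grid cuts by a finite case analysis. The paper organizes the cases as four overlapping rectangles in the parameter space of $(c_2,r_2)$ (e.g., for $\sigma=14523$: $c_2\le 3\land r_2\le 3$ forces a $21$ in the $\Inc$ cell, $c_2\le 4\land r_2\ge 4$ forces a $12$ in the bottom-right $\Dec$ cell, $c_2\ge 4\land r_2\le 4$ forces a $12$ in the top-left $\Dec$ cell, and $c_2\ge 5\land r_2\ge 5$ puts the $132$-copy into the $\Av(132)$ cell), whereas you first anchor a specific $\pi$-copy and then branch on $h$; these are equivalent bookkeeping choices.

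One small point to tighten: your illustrative case $h=5$ (``the entire bottom row equals $\sigma$ and contains $132$'') is not yet a contradiction on its own, since for small $v$ the $132$-copy is not confined to $\mathrm{BL}$; you still need the observation (which you already made for $h=4$) that for $v\le 3$ the pair $(4,2),(5,3)$ lands in $\mathrm{BR}$ and violates $\Dec$. With that detail filled in, the plan is complete and matches the paper's argument.
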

\begin{proof}
Suppose that $\sigma$ and $\pi$ are one of the listed cases. Observe that $\Grid(\cM)$ is a subclass of $\Av(\sigma)$ if and only if $\sigma$ is not in $\Grid(\cM)$. For contradiction, suppose that
the class $\Grid(\cM)$ contains $\sigma$. Therefore, there exists a witnessing $\cM$-gridding $1 = c_1 \le c_2 \le c_3 = 6$ and $1 = r_1 \le r_2 \le r_3 = 6$ of~$\sigma$. 

Let us consider the four choices of $\sigma$ separately, starting with $\sigma=14523$: if $c_2\le 3$ and $r_2\le 3$, the cell $(2,2)$ of the gridding contains the pattern $21$, if $c_2\le 4$ and $r_2\ge 4$, the cell $(2,1)$ contains $12$, if $c_2\ge 4$ and $r_2\le 4$, the cell $(1,2)$ contains $12$, and if $c_2\ge 5$ and $r_2\ge 5$, the cell $(1,1)$ contains $132$. In all cases we get a contradiction with the properties of the $\cM$-gridding. The same argument applies to $\sigma=14513$, except in the last case we use the pattern $231$ instead of $132$.

For $\sigma=32154$, the four cases to consider are $c_2\le 4 \land r_2\le 4$, $c_2\ge 5\land r_2\le 
3$, $c_2\le 3\land r_2\ge 5$, and $c_2\ge 4\land r_2\ge 4$, in each case getting contradiction in a 
different cell of the gridding. For $\sigma=42513$, the analogous argument distinguishes the cases 
$c_2\le 3 \land r_2\le 3$, $c_2\ge 4\land r_2\le 4$, $c_2\le 4\land r_2\ge 4$, and $c_2\ge 5\land 
r_2\ge 5$.
%We claim that $c_2 \ge 5$. It is easy to check that $c_2=4$ forces such a choice of $r_2$ that the $(1,2)$-cell of $\sigma$ contains $12$ when $\sigma$ is one of $14523$, $24513$, $42513$ and in the case $\sigma = 32145$ it forces the $(1,1)$-cell to contain $321$. Either way, that is not possible. Analogously, it can be checked that $r_2 \ge 5$ because setting $r_2 = 4$ forces such a choice of $c_2$ that the $(2,1)$-cell of $\sigma$ contains $12$ when $\sigma$ is one of $14523$, $24513$, $42513$ and in the case $\sigma = 32145$ it forces the $(1,1)$-cell to contain $321$.
%
%Therefore, $c_2 \ge 5$ and $r_2 \ge 5$ and at least three points belong to the $(1,1)$-cell of the gridding. In all the cases, this is precisely the permutation $\pi$ that cannot occur there.\mo{The original proof was wrong. This, on the other hand, is probably too much of `left as an exercise'.}
\end{proof}

It is easy to see that every $\sigma$ of length at least 6 contains a pattern of size 5 which is not 
symmetric to $41352$. Therefore, \CPPM{$\Av(\sigma)$} is NP-complete for all permutations $\sigma$ of length 
at least 4 except for one symmetry type of length 5 and for four out of seven symmetry types of 
length~4. As \CPPM{$\Av(\sigma)$} is polynomial-time solvable for any $\sigma$ of length at most 3, 
these are, in fact, the only cases left unsolved.

\begin{corollary}
If $\sigma$ is a permutation of length at least 4 that is not in symmetric to any of $3412, 3142, 4213, 4123$ or $41352$, then \CPPM{$\Av(\sigma)$} is NP-complete, and unless ETH fails, it cannot be solved in time $2^{o(n /\log{n})}$.
\end{corollary}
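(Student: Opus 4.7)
The plan is to split by $|\sigma|$ and reduce each case to an earlier result, exploiting the fact that $\sigma' \preceq \sigma$ implies $\Av(\sigma') \subseteq \Av(\sigma)$, so any valid \CPPM{$\Av(\sigma')$} instance is automatically a valid \CPPM{$\Av(\sigma)$} instance and both the NP-completeness and the ETH lower bound transfer for free.

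For $|\sigma|=4$, I would note that the seven symmetry types of length-$4$ permutations split as the four excluded in the hypothesis ($3412, 3142, 4213, 4123$) and the three represented by $4321, 4231, 4312$. Thus $\sigma$ lies in one of the latter three, contains itself as such a pattern, and Corollary~\ref{cor-size4} delivers the required conclusion. For $|\sigma|\ge 6$, I would invoke the observation (stated just before the corollary) that every such $\sigma$ contains some length-$5$ pattern $\sigma'$ not symmetric to $41352$, and reduce to the length-$5$ case applied to $\sigma'$.

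The real work is the length-$5$ case. If $\sigma$ contains any pattern symmetric to $4321$, $4231$, or $4312$, Corollary~\ref{cor-size4} already handles it. Otherwise the computer-checked enumeration pins $\sigma$ down to one of the five symmetry types $14523, 24513, 32154, 42513, 41352$, and the hypothesis eliminates the last. For the remaining four types, Proposition~\ref{prop:rich-2by2} supplies a gridding matrix $\cM = \begin{psmallmatrix}\Dec & \Inc\\ \Av(\pi) & \Dec\end{psmallmatrix}$ with $\pi\in\{132,231,321\}$ and $\Grid(\cM)\subseteq\Av(\sigma)$. Its cell graph is a $4$-cycle containing exactly one vertex labelled $\Av(\pi)$, and it is proper-turning since each row and column of $\cM$ has exactly two nonempty cells. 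I would then aim to apply Corollary~\ref{cor:cycle-hard} to $\cM$ with $\cD := \Av(\pi)$.

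The main obstacle is verifying the three hypotheses on $\cD$ demanded by Corollary~\ref{cor:cycle-hard}: that $\cD$ is non-monotone-griddable, that it contains a monotone juxtaposition, and that it is sum-closed or skew-closed. These all reduce to routine (skew- or sum-)indecomposability checks on $\pi$: since $132$ is skew-indecomposable, $\Av(132)$ contains $\ominus 12$ and is skew-closed, while $231$ and $321$ are sum-indecomposable, so $\Av(231)$ and $\Av(321)$ contain $\oplus 21$ and are sum-closed; and a brief per-case analysis confirms $\Av(132), \Av(231) \supseteq \Grid(\Dec\;\Inc)$ and $\Av(321) \supseteq \Grid(\Inc\;\Inc)$, providing the required monotone juxtapositions. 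Once these are in hand, Corollary~\ref{cor:cycle-hard} yields the desired hardness and the $2^{o(n/\log n)}$ ETH bound for \CPPM{$\Grid(\cM)$}, which transfer to \CPPM{$\Av(\sigma)$} via the inclusion $\Grid(\cM)\subseteq\Av(\sigma)$.
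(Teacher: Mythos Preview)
Your proposal is correct and follows essentially the same route as the paper's (implicit) argument: split by $|\sigma|$, invoke Corollary~\ref{cor-size4} for the three non-excluded length-$4$ symmetry types, and for the four remaining length-$5$ types combine Proposition~\ref{prop:rich-2by2} with Corollary~\ref{cor:cycle-hard}, then propagate upward via containment for $|\sigma|\ge 6$. Your explicit verification that each $\Av(\pi)$ with $\pi\in\{132,231,321\}$ is sum- or skew-closed, non-monotone-griddable, and contains a monotone juxtaposition is more detailed than the paper, which dispatches this with a one-line remark before Corollary~\ref{cor-size4}; the only small point worth making explicit is that Proposition~\ref{prop:rich-2by2} is stated for four specific representatives, so for a general $\sigma$ in one of those symmetry types you implicitly pass through a symmetry $\Psi$ (which preserves all the relevant hypotheses on $\cD$).
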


To conclude this section, we remark that the suitable grid subclasses were discovered via computer experiments facilitated by the Permuta library~\cite{permuta}.

\section{Polynomial-time algorithm}

We say that a permutation $\pi$ is \emph{$t$-monotone} if there is a partition $\Pi\! =\! (S_1, 
\dotsc, S_t)$ of $S_\pi$ such that $S_i$ is a monotone point set for each $i \in [t]$. The partition 
$\Pi$ is called a \emph{$t$-monotone partition}.

Given a $t$-monotone partition $\Pi = (S_1, \dots, S_t)$ of a permutation $\pi$ and a $t$-monotone partition $\Sigma = (S'_1, \dots, S'_t)$ of $\tau$, an embedding $\phi$ of $\pi$ into $\tau$ is a \emph{$(\Pi, \Sigma)$-embedding} if $\phi(S_i) \subseteq S'_i$ for every $i\in [t]$. Guillemot and Marx~\cite{Guillemot2014} showed that if we fix a $t$-monotone partitions of both $\pi$ and $\tau$, the problem of finding a $(\Pi, \Sigma)$-embedding is polynomial-time solvable.

\begin{proposition}[Guillemot and Marx~\cite{Guillemot2014}]
\label{pro-monotone}
Given a permutation $\pi$ of length $m$ with a $t$-monotone partition $\Pi$ and a permutation $\tau$ of length $n$ with a $t$-monotone partition $\Sigma$, we can decide if there is a $(\Pi, \Sigma)$-embedding of $\pi$ into $\tau$ in time $O(m^2n^2)$.
\end{proposition}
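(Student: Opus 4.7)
My plan is to design a dynamic programming algorithm. First, I would normalize the instance so that every class $S_i$ of $\Pi$ and every class $S'_i$ of $\Sigma$ is an increasing sequence: for each class that is decreasing, apply a vertical reflection restricted to the $y$-coordinates of that class's elements (simultaneously in $\pi$ and $\tau$, so the class indices in $\Pi$ and $\Sigma$ match up). A $(\Pi,\Sigma)$-embedding exists before the operation if and only if one exists after, and in the normalized instance $x$-order and $y$-order coincide inside each class, which significantly simplifies the consistency bookkeeping below.

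Next, I would enumerate the points of $\pi$ and $\tau$ in $x$-order as $p_1,\ldots,p_m$ and $q_1,\ldots,q_n$. Writing $c(p)$ for the class index of a point, the DP table is $f(j,i)$ indexed by pairs $(j,i)\in[m]\times[n]$ with $c(p_j)=c(q_i)$, and I set $f(j,i)=\textsf{True}$ iff some valid partial $(\Pi,\Sigma)$-embedding of $p_1,\ldots,p_j$ into a subset of $q_1,\ldots,q_i$ sends $p_j$ to $q_i$. The base case is $f(1,i)=\textsf{True}$ whenever $c(p_1)=c(q_i)$. The transition examines all predecessors $(j-1,i')$ with $i'<i$ and $f(j-1,i')=\textsf{True}$, and checks that extending by the pair $(p_j,q_i)$ respects the $x$-order (automatic from $i'<i$) and the $y$-order constraints. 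The algorithm then answers \emph{yes} iff $f(m,i)=\textsf{True}$ for some~$i$.

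The main obstacle is ensuring that a \emph{local} consistency check along a transition guarantees the \emph{global} $y$-order constraint between the new pair $(p_j,q_i)$ and all earlier mapped pairs. My strategy for proving this exploits the normalized monotonicity: within each class the $y$-ordering of mapped elements is dictated by their $x$-ordering, so at any stage of the DP the last mapped $\tau$-point inside each class encodes all $y$-information relevant to the new pair. I would either enrich the state implicitly with a per-class "current-image" pointer, or observe that these pointers are recoverable from the trajectory that realizes $f(j-1,i')=\textsf{True}$ via auxiliary precomputed tables (giving, for every candidate image $q_i$ and every class $c$, the $y$-bracket into which $q_i.y$ falls among points of $S'_c$). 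Correctness then follows by induction on $j$: if the partial embedding up to $(p_{j-1},q_{i'})$ is globally $y$-consistent and $(p_j,q_i)$ is $y$-bracket-consistent with the per-class pointers, then the extended embedding remains globally $y$-consistent.

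Finally, I would complete the runtime analysis. The table has $O(mn)$ cells, and each cell aggregates information from $O(mn)$ candidate predecessor pairs $(j-1,i')$; the per-candidate consistency check is $O(1)$ time using the precomputed bracket tables. This yields the claimed $O(m^2n^2)$ bound. The most delicate part of writing this up carefully is the formal verification that the local bracket check subsumes global consistency, which is exactly where the monotonicity of $\Pi$ and $\Sigma$ is essential and where the argument breaks without the promise.
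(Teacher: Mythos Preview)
The paper does not prove this proposition; it is quoted as a result of Guillemot and Marx, so there is no in-paper argument to compare against.

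Your proposal has a genuine gap in the dynamic program. The Boolean state $f(j,i)$ together with a predecessor $(j-1,i')$ does not carry enough information to decide whether assigning $p_j\mapsto q_i$ is $y$-consistent with the \emph{entire} partial embedding of $p_1,\dotsc,p_{j-1}$. The $y$-constraint between $p_j$ and an earlier point $p_k$ lying in a different class depends on precisely which element of $S'_{c(p_k)}$ received $p_k$, and that information is not encoded in $(j-1,i')$. You recognise this and propose ``per-class current-image pointers'', but making those pointers explicit in the state blows the table up to $\Theta\!\bigl(m\prod_i |S'_i|\bigr)$, potentially $\Theta(m\,n^{t})$, which destroys the claimed $O(m^2n^2)$ bound. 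The alternative --- that the pointers are ``recoverable from the trajectory'' --- does not rescue a Boolean DP: two distinct trajectories can reach the same cell $f(j-1,i')=\textsf{True}$ with different pointer configurations, only one of which extends by $(p_j,q_i)$; there is no canonical or greedy trajectory that dominates all others across all $t$ classes simultaneously. This is exactly the place where the argument needs a new idea, and the write-up currently hand-waves it.

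A secondary issue: the normalisation step also needs justification. Reflecting the $y$-coordinates within one class permutes those coordinates among themselves, but this changes the $y$-relation between a class-$i$ point and a point of another class, and it does so differently in $\pi$ and in $\tau$ (since the class-$i$ $y$-values are interleaved differently with the other classes in the two permutations). The claimed ``iff'' is therefore not immediate.

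For orientation, Guillemot and Marx do not use a sweep DP of this kind. They encode the problem as a binary CSP whose variables are the points of $\pi$, with the domain of $p$ being the points of $S'_{c(p)}$, and with constraints only between $x$-adjacent and between $y$-adjacent pairs of $\pi$. The monotonicity of the classes forces every binary constraint to be of a closed-under-median (row-convex) type, so establishing local consistency already guarantees a global solution, and this is what yields the polynomial bound independent of~$t$. That structural fact is precisely what stands in for the per-class pointers your DP would otherwise need.
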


We can combine this result with the fact that there is only a bounded number of ways how to grid a permutation, and obtain the following counterpart to Corollary~\ref{cor:cycle-hard}.

\begin{theorem}\label{thm-monotone}
\CPPM{$\cC$} is polynomial-time solvable for any monotone-griddable class $\cC$.
\end{theorem}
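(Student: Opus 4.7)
The plan is to reduce \CPPM{$\cC$} to the Guillemot--Marx fixed-partition matching algorithm recorded in Proposition~\ref{pro-monotone}. Since $\cC$ is monotone-griddable, fix once and for all a monotone gridding matrix $\cM$ of size $k\times\ell$ with $\cC\subseteq\Grid(\cM)$; then $t=k\ell$ is a constant depending only on~$\cC$, and both $\pi$ and $\tau$ are guaranteed to admit an $\cM$-gridding.

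Given the input $(\pi,\tau)$ with $|\pi|=m$ and $|\tau|=n$, I would first enumerate all $\cM$-griddings of $\pi$ and of $\tau$. Each such gridding is specified by a weakly increasing choice of $k+1$ column cuts and $\ell+1$ row cuts in the appropriate range, so the number of candidate griddings is $O(n^{k+\ell-2})$ for $\tau$ and $O(m^{k+\ell-2})$ for $\pi$, both polynomial since $k$ and $\ell$ are absolute constants. Every such gridding yields in an obvious way a $t$-monotone partition of the underlying permutation, with the $(i,j)$-part being a monotone set because $\cM_{i,j}\in\{\Inc,\Dec,\emptyset\}$. For each pair $(\Pi,\Sigma)$ consisting of an $\cM$-gridding $\Pi$ of $\pi$ and an $\cM$-gridding $\Sigma$ of $\tau$, I would invoke Proposition~\ref{pro-monotone} to decide whether a $(\Pi,\Sigma)$-embedding exists, in time $O(m^2n^2)$ per pair. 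The algorithm accepts iff some pair succeeds, giving total running time polynomial in~$n$.

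Soundness is immediate because any $(\Pi,\Sigma)$-embedding is in particular an embedding of $\pi$ into~$\tau$. The crux is completeness: if $\tau$ contains $\pi$ via an embedding $\phi$, then for some pair $(\Pi,\Sigma)$ the map $\phi$ is actually a $(\Pi,\Sigma)$-embedding. To see this, fix any $\cM$-gridding $\Sigma$ of $\tau$ with column cuts $(c_i)$ and row cuts $(r_j)$ and pull them back through $\phi$: define $c'_i$ to be one plus the number of points of $\pi$ whose $\phi$-image has $x$-coordinate below $c_i$, and analogously for $r'_j$. Then $\phi$ sends the $(i,j)$-cell of $\pi$ under $(c',r')$ into the $(i,j)$-cell of $\tau$ under $(c,r)$, which lies in $\cM_{i,j}$; hence so does the cell of $\pi$, so $(c',r')$ is a bona fide $\cM$-gridding $\Pi$ of $\pi$, with respect to which $\phi$ is a $(\Pi,\Sigma)$-embedding that the algorithm will discover.

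The only step that is not entirely routine is this completeness argument, and even it reduces to the trivial observation that any subset of a monotone point set is itself monotone. No further ingredients beyond Proposition~\ref{pro-monotone} and the definition of monotone-griddability are needed.
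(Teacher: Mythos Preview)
Your proof is correct and follows essentially the same approach as the paper: fix a monotone gridding matrix $\cM$, enumerate the polynomially many $\cM$-griddings, and invoke Proposition~\ref{pro-monotone} on the resulting monotone partitions, with completeness established by pulling back a gridding of $\tau$ through the embedding~$\phi$. The only minor difference is that the paper fixes a single $\cM$-gridding of $\tau$ and enumerates only over griddings of~$\pi$ (which your own completeness argument shows is already enough), whereas you enumerate over all pairs; this affects the running time by a polynomial factor but not the substance of the argument.
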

\begin{proof}
Let $\cM$ be a $k \times \ell$ monotone gridding matrix such that $\Grid(\cM)$ contains the class $\cC$. We have to decide whether $\pi$ is contained in $\tau$ for two given permutations $\pi$ of length $m$ and $\tau$ of length $n$, both belonging to the class $\cC$.

First, we find an $\cM$-gridding of $\tau$. We enumerate all possible $k \times \ell$ griddings and 
for each, we test if it is a valid $\cM$-gridding. Observe that there are in total $O(n^{k + \ell - 
2})$ such griddings since they are determined by two sequences of values from the set $[n]$, one of 
length $k-1$ and the other of length $\ell-1$. Moreover, it is straightforward to test  in time 
$O(n^2)$ whether a given $k \times \ell$ gridding is in fact an $\cM$-gridding.  Note that we are 
guaranteed to find an $\cM$-gridding as $\tau$ belongs to $\cC \subseteq \Grid(\cM)$. We set 
$\Sigma$ to be the $(k \cdot \ell)$-monotone partition of $\tau$ into the monotone sequences given 
by the individual cells of the gridding.

In the second step, we enumerate all possible $\cM$-griddings of $\pi$. As with $\tau$, we enumerate all possible $O(m^{k + \ell - 2})$ $k \times \ell$ griddings of $\pi$ and check for each gridding whether it is actually an $\cM$-gridding in time $O(m^2)$. For each $\cM$-gridding found, we let $\Pi$ be the $(k \cdot \ell)$-monotone partition of $\pi$ given by the gridding, and we apply Proposition~\ref{pro-monotone} to test whether there is a $(\Pi, \Sigma)$-embedding in time $O(m^2 n^2)$.

If there is an embedding $\phi$ of $\pi$ into $\tau$, there is a $(k \cdot \ell)$-monotone partition $\Sigma'$ of $\pi$ such that $\phi$ is a $(\Pi, \Sigma')$-embedding. Therefore, the algorithm correctly solves \CPPM{$\cC$} in time $O(n^{k+\ell} + m^{k+\ell}n^2)$ --- polynomial in $n, m$.
\end{proof}

Notice that if $\cM$ is a gridding matrix whose every entry is monotone griddable, or equivalently no 
entry contains the Fibonacci class or its reverse as a subclass, then the class $\Grid(\cM)$ is 
monotone griddable as well. It follows that for such $\cM$, the \CPPM{$\Grid(\cM)$} problem is 
polynomial-time solvable. We also note that recent results on \PPPM{$\cC$}~\cite{Jelinek2020} imply 
that if a gridding matrix $\cM$ has an acyclic cell graph, and if every nonempty cell is either 
monotone or symmetric to a Fibonacci class, then \PPPM{$\Grid(\cM)$}, and therefore also 
\CPPM{$\Grid(\cM)$}, is polynomial-time solvable as well. These two tractability results contrast 
with our Corollary~\ref{cor:cycle-hard}, which shows that for any gridding matrix $\cM$ whose cell 
graph is a cycle, and whose nonempty cells are all monotone except for one Fibonacci cell, 
\CPPM{$\Grid(\cM)$} is already NP-hard.

\section{Open problems}

We have presented a hardness reduction which allowed us to show that the \CPPM{$\Av(\sigma)$} 
problem is NP-complete for every permutation $\sigma$ of size at least 6, as well as for most 
shorter choices of~$\sigma$. Nevertheless, for several symmetry types of $\sigma$, the complexity 
of \CPPM{$\Av(\sigma)$} remains open. We collect all the remaining unresolved cases as our first 
open problem.

\begin{problem}
What is the complexity of \CPPM{$\Av(\sigma)$}, when $\sigma$ is a permutation from the set 
$\{3412, 3142, 4213, 4123, 41352\}$?
\end{problem}

Our hardness results are accompanied by time complexity lower bounds based on the ETH. 
Specifically, for our NP-hard cases, we show that under ETH, no algorithm may solve \CPPM{$\cC$} in time 
$2^{o(\sqrt{n})}$. The lower bound can be improved to $2^{o(n/\log n)}$ under additional 
assumptions about~$\cC$. This opens the possibility of a more refined complexity hierarchy within 
the NP-hard cases of \CPPM{$\cC$}. In particular, we may ask for which $\cC$ can 
\CPPM{$\cC$} be solved in subexponential time.

\begin{problem}
Which cases of \CPPM{$\cC$} can be solved in time $2^{O(n^{1-\eps})}$? Can the general PPM problem 
be solved in time $2^{o(n)}$?
\end{problem}

\bibliography{bibliography}
\end{document}